\documentclass[a4paper,twoside,12pt]{amsart}

\usepackage{amssymb}
\usepackage{amsmath}
\usepackage{graphicx}
\usepackage{hyperref}
\usepackage{subfigure}

\usepackage[latin1]{inputenc} 
\usepackage[T1]{fontenc}

  \setlength\voffset{-1in}
  \setlength\hoffset{-1in}
  \setlength\headsep{12pt}
  \setlength\topskip\baselineskip
  \setlength\marginparsep{5pt}
  \setlength\marginparwidth{20pt}
  \setlength\marginparpush{5pt}
  \setlength\headheight{3\baselineskip}
  \setlength\topmargin{15pt}
  \setlength{\footskip}{21pt}
  \setlength\textheight{49\baselineskip}
  \setlength\paperwidth{8.5in}
  \setlength\paperheight{11in}
  \setlength\columnsep{2pc}
  \setlength\oddsidemargin{2.5cm}
  \setlength\evensidemargin{2.5cm}
  \setlength\textwidth\paperwidth
  \addtolength\textwidth{-2\oddsidemargin}

\newtheorem{theorem}{Theorem}[section]
\newtheorem{lemma}[theorem]{Lemma}
\newtheorem{prop}[theorem]{Proposition}
\newtheorem{cor}[theorem]{Corollary}

\theoremstyle{definition}
\newtheorem{definition}[theorem]{Definition}

\newcommand\bn{{\mathbb N}}
\newcommand\bz{{\mathbb Z}}
\newcommand\bq{{\mathbb Q}}
\newcommand\bc{{\mathbb C}}

\newcommand\ba{{\mathbb A}}

\newcommand\br{{\mathbb R}}

\newcommand\Ic{{\mathcal I}}

\newcommand\nc{{\mathcal N}}

\newcommand\cl{{\mathcal L}}

\newcommand\cls{{\mathcal S}}
\newcommand\Gc{{\mathcal G}}

\newcommand\pc{{\mathcal P}}

\newcommand\bl{{\mathbb L}}

\newcommand\tS{{\operatorname{Supp}}}

\newcommand\ti{{\operatorname{in}}}

\newcommand\ag{{\alpha}}
\newcommand\bg{{\beta}}
\newcommand\Dg{{\Delta}}
\newcommand\mg{{\mu}}
\newcommand\sg{{\sigma}}
\newcommand\Sg{{\Sigma}}

\newcommand\og{{\omega}}
\newcommand\zg{{\zeta}}

\numberwithin{equation}{section}

\DeclareMathOperator\spec{Spec}

\theoremstyle{remark}
\newtheorem{remark}[theorem]{Remark}

\numberwithin{equation}{section}



\title[The Newton tree: geometric interpretation and applications]{The Newton tree: geometric interpretation and applications to the motivic zeta function and the log canonical threshold}

\author{Pierrette  Cassou-Nogu\`es}
\address{Institut de Math\'ematiques de Bordeaux, Universit\'e Bordeaux I,
350, Cours de la Lib\'eration, 33405, Talence Cedex 05, FRANCE}
\thanks{}

\author{Willem Veys}
\address{KU Leuven, Dept. Wiskunde, Celestijnenlaan 200B, 3001 Leuven, Belgium}
\curraddr{}
\thanks{}


\thanks{
First author is partially supported  by the grants
MTM2010-21740-C02-01 and MTM2010-21740 -C02-02. Second author is
partially supported by the KU Leuven grant OT/11/069.}

\keywords{}

\dedicatory{}

\subjclass[2000]{}

\begin{document}
\begin{abstract}
Let $\Ic$ be an arbitrary ideal in $\bc[[x,y]]$. We use the Newton
algorithm to compute by induction the motivic zeta function of the
ideal, yielding only few poles, associated to the faces of the
successive Newton polygons. We associate a minimal Newton tree to
$\Ic$, related to using good coordinates in the Newton algorithm,
and show that it has a conceptual geometric interpretation in terms
of the log canonical model of $\Ic$. We also compute the log
canonical threshold from a Newton polygon and strengthen Corti's
inequalities.
\end{abstract}

\maketitle

\section{Introduction}

We introduced in \cite{CV} the Newton algorithm as an efficient way
to study an arbitrary ideal $\Ic$ in $\bc[[x,y]]$, involving a
finite succession of Newton polygons. We showed for instance how to
compute its integral closure and the Hilbert-Samuel multiplicity of
$\Ic$ in a nice combinatorial way. In this article we provide more
applications. First we show how the Newton algorithm can be used to
compute by induction the motivic zeta function of $\Ic$, yielding
candidate poles associated to the faces of the successive Newton
polygons (Theorem \ref{formulazetafunction}). This method is much
more efficient than other ones in the literature, and almost all
candidate poles are true poles. The classical method uses a log
principalization of $\Ic$ and yields generally an enormous amount of
false candidate poles. On the other hand the formula in
\cite[Theorem 6.1]{V1} has only the actual poles, but it involves
horrible determinants, that are in fact again expressed in terms of
a log principalization.

 \smallskip
In \cite{CV} we moreover codified most of the data of the algorithm in a useful combinatorial object, the Newton tree of $\Ic$. This (decorated) tree depends heavily on the chosen resolution. We introduce an operator on Newton trees, {\em exchange of vertical edge}, and show that it corresponds to a change of coordinates (Proposition \ref{exchange}). We define the notion of {\em minimal Newton tree}, we prove that any Newton tree can be converted into a minimal one by exchanging vertical edges (Proposition \ref{towardsminimal}), and that this is related to the use of so-called {\em (very) good coordinates}. Coming back to the motivic zeta function, some very particular faces do not give poles (Proposition \ref{no-contribution}), and these faces will not appear when one uses very good coordinates.

A minimal Newton tree of $\Ic$ has also a conceptual geometric interpretation. Roughly its vertices correspond to the exceptional components in the log canonical model of the ideal, and its edges to intersections of components (Theorem \ref{principalization}). Also the decorations of the Newton tree correspond exactly to the \lq classical\rq\ decorations on a dual tree of a curve configuration. For vertex decorations this was shown in \cite{CNL} in the context of principal ideals; the argument is still valid for arbitrary ideals. For edge decorations the result is new (Proposition \ref{edgedecorations}).

 \smallskip

We also show a generalization of a result known in the case where the ideal is principal \cite{K}\cite{ACLM3}, that is, that there exists a system of coordinates such that the log canonical threshold can be computed from the Newton polygon (Theorem \ref{lct}). In \cite{dFEM} and \cite{Co} the authors prove inequalities between the Hilbert-Samuel multiplicity of an ideal and its log canonical threshold. Using the computations via Newton polygons these inequalities appear to be simple geometric facts (Corollary \ref{HSknown}), and using the computations via the Newton algorithm they can be strengthened (Theorem \ref{HSinequality}).

\medskip
The article is organized as follows. In the next section we recall
the Newton algorithm for ideals. In section $3$ we apply the Newton
algorithm to the computation of the motivic zeta function. In
section 4, after recalling the construction of Newton trees, we
compare them in different systems of coordinates and we introduce
the notion of minimal Newton tree. In section $5$ we show that the
dual tree of the log canonical model of the ideal can be deduced
from a minimal Newton tree. Finally, we study the log canonical
threshold and the inequalities involving the Hilbert-Samuel
multiplicity in section 6.

\medskip
\section{Newton Algorithm for an ideal}

\medskip
We recall briefly the essential definitions and results from
\cite{CV} concerning the Newton algorithm.

\smallskip
\subsection{Newton polygon}
For any set $E\subset \bn \times \bn$, denote by $\Delta(E)$ the
smallest convex set containing $E+\br _{+}^2=\{a+b \mid a\in E,\
b\in \br _{+}^2\}$. A set $\Delta \subset \br ^2$ is a { \it Newton
diagram} if there exists a set  $E\subset \bn \times \bn$ such that
$\Delta=\Delta (E)$. The smallest set $E_0\subset \bn \times \bn$
such that $\Delta=\Delta (E_0)$ is called the set of {\em vertices}
of a Newton diagram $\Delta$; it is a finite set. Let
$E_0=\{v_0,\cdots, v_m\}$, with  $v_i=(\ag _i,\bg_i)\in \bn \times
\bn$ for $i=0,\cdots, m$, and $\ag_{i-1}<\ag_i$, $\bg_{i-1}>\bg_i$
for $i=1,\cdots, m$.

For $i\in\{1,\cdots, m\}$, denote $S_i=[v_{i-1},v_i]$ and by
$l_{S_i}$ the line supporting the segment $S_i$. We call $\nc
(\Dg)=\cup_{1\leq i\leq m} S_i$ the {\it Newton polygon} of $\Dg$
and the $S_i$ its {\it faces}. The Newton polygon $\nc (\Dg)$ is
empty if and only if $E_0 =\{v_0=(\ag_0,\bg_0)\}$. The Newton
diagram $\Delta$ has also two non-compact faces: the vertical
half-line starting at $v_0$, and the horizontal half-line starting
at $v_m$. The integer $h(\Dg)=\bg _0-\bg_m$ is called the {\it
height} of $\Dg$.
 Let
 $$f(x,y)=\sum_{(\ag,\bg)\in \bn \times \bn}c_{\ag,\bg}x^{\ag}y^{\bg}\in \bc[[x,y]] .$$
 We define the support of $f$ as
 $$\tS f=\{(\ag,\bg)\in \bn \times \bn \mid c_{\ag,\bg}\neq 0\} .$$
 We denote $\Delta(f)=\Delta(\tS f)$ and $\nc (f)=\nc(\Dg (f))$.
 Let $l$ be a line in $\br^2$. We define the initial part of $f$ with respect to $l$ as
 $$\ti (f,l)=\sum_{(\ag,\bg)\in l}c_{\ag,\bg}x^{\ag}y^{\bg} .$$
 If the line $l$ has equation $p\ag+q\bg =N$, with $(p,q)\in (\bn^*)^2$ and $\gcd (p,q)=1$, then $\ti (f,l)$ is zero or a monomial or, if $l=l_S$ for some segment $S$ of $\nc (\Dg)$, of the form
 $$\ti (f,l)=x^{a_l}y^{b_l}F_S(x^q,y^p) ,$$
   where $(a_l,b_l)\in \bn^2$ and
 $$F_S(x,y)=c\prod _{1\leq i\leq n }(y-\mg _ix)^{\nu_i} ,$$
 with $c\in \bc^*$, $n\in \bn^*$, $\mg_i\in \bc^*$ (all different) and $\nu_i \in \bn^*$.

 \bigskip
 Now let  $\Ic=(f_1,\cdots, f_r)$ be a non-trivial ideal in $\bc[[x,y]]$. We define
 $$\Dg(\Ic)=\Dg(\cup _{1\leq i\leq r}\tS f_i)  \quad\text{and}\quad \nc(\Ic)=\nc(\Dg(\Ic)) .$$
 When $\Ic=(f)$ we simply write $\Dg(f)$ and $\nc(f)$.
For a segment $S$ of $\nc(\Ic)$ we denote by $\ti (\Ic,S)$ the ideal generated by the
 $\ti (f_i,l_S), 1\leq i\leq r$, and call it the {\it initial ideal} of $\Ic$ with respect to $S$.

One easily verifies that $\Dg(\Ic)=\Dg(\cup _{f\in \Ic}\tS f)$;
hence the sets $\Dg(\Ic)$ and $\nc(\Ic)$ and the ideals  $\ti
(\Ic,S)$ depend only on $\Ic$, not on a system of generators of
$\Ic$.

\medskip

 Let $S$ be a face of $\nc(\Ic)$ and $p_S\ag+q_S\bg =N_S$ be the equation of $l_S$, with $\gcd (p_S,q_S)=1$ as before. Then $\ti(\Ic,S)$ is of the form
 \begin{equation}\label{initialideal1}
 \ti(\Ic,S)=\left(x^{a_S}y^{b_S}F_{\Ic,S}(x^{q_S},y^{p_S})\right)
 \end{equation}
 or
 \begin{equation}\label{initialideal2}
 \ti(\Ic,S)=x^{a_S}y^{b_S}F_{\Ic,S}(x^{q_S},y^{p_S})\big(k_1(x^{q_S},y^{p_S}),\cdots,k_s(x^{q_S},y^{p_S})\big)
 \end{equation}
 with $s \geq 2$,
 where $F_{\Ic,S},k_1,\cdots,k_s$ are homogeneous polynomials, $F_{\Ic,S}$ is not divisible by $x$ or $y$  and $k_1,\cdots, k_s$ are coprime and of the same degree $d_S$. In the first case we put $d_S=0$.
 The polynomial $F_{\Ic,S}$, monic in $y$,  is called the {\it face polynomial} (it can be identically one). A face $S$ is called a {\it dicritical face} if $\ti(\Ic,S)$ is not a principal ideal.  Thus it is dicritical if and only if $d_S\geq 1$.

In the sequel we will also call the equation of the supporting line
of a face simply the equation of the face.

\medskip
 \subsection{Newton maps}

 \begin{definition}
 Let $(p,q)\in (\bn^*)^2$ with $\gcd (p,q)=1$. Take $(p',q')\in \bn^2$ such that $pp'-qq'=1$. Let $\mu\in \bc^*$. Define
$$ \begin{matrix}
 &\sg_{(p,q,\mu)}&:&\bc[[x,y]]&\longrightarrow&\bc[[x_1,y_1]]\\
 &&&f(x,y)&\mapsto&f(\mu^{q'}x_1^p,x_1^q(y_1+\mu^{p'})) .
 \end{matrix}$$
 We say that the map $\sg_{(p,q,\mu)}$ is a {\it Newton map}.
  \end{definition}

 The numbers $(p',q')$ are introduced only to avoid taking roots of complex numbers.
In the sequel we will always assume that $p'\leq q$ and $q'<p$. This will make procedures canonical.

Let $\Ic=(f_1,\cdots, f_r)$ be a non-trivial ideal in $\bc[[x,y]]$. Let $\sg_{(p,q,\mu)}$ be a Newton map. We denote by
$\sg_{(p,q,\mu)}(\Ic)$ the ideal in $\bc[[x_1,y_1]]$  generated by the $\sg_{(p,q,\mu)}(f_i)$ for $i=1,\cdots, r$. Since a Newton map is a ring homomorphism, this ideal does not depend on the choice of the generators of $\Ic$.

\begin{lemma}\label{Newton-algbis}\cite[Lemma 2.0]{CV}
Let $\Ic$ be a non-trivial ideal in $ \bc[[x,y]]$ and
$\sg_{(p,q,\mu)}(\Ic)=\Ic_1$.
\begin{enumerate}
\item If there does not exist a face $S$ of $\nc(\Ic)$ whose supporting line has equation $p\ag +q\bg=N$ with $N\in \bn$,  then the ideal $\Ic_1$ is principal,
generated by a power of $x_1$.
\item If there exists a face  $S$ of $\nc(\Ic)$ whose supporting line has equation $p\ag +q\bg =N_0$ for
  some $N_0 \in \bn$,  and if $F_{\Ic, S}(1,\mu)\neq 0$, then $\Ic_1=(x_1^{N_0})$.
\item If there exists a face $S$ of $\nc(\Ic)$ whose supporting line has equation $p\ag +q\bg =N_0$ for
  some $N_0 \in \bn$,  and if $F_{\Ic, S}(1,\mu)= 0$, then $\Ic_1=(x_1^{N_0})\Ic _1'$ and the height of the Newton polygon of $\Ic_1$ is less than or equal to the multiplicity of $\mu$ as root of $F_{\Ic, S}(1,X)$,
    \end{enumerate}
  \end{lemma}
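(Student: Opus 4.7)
The plan is to control the Newton map via the $x_1$-adic valuation $v$ on $\bc[[x_1,y_1]]$ given by $v(h)=\max\{k : x_1^k \mid h\}$. I would begin with the explicit expansion
\[
\sg_{(p,q,\mu)}(f) \;=\; \sum_{(\ag,\bg)\in\tS f} c_{\ag,\bg}\,\mu^{q'\ag}\,x_1^{p\ag+q\bg}(y_1+\mu^{p'})^{\bg},
\]
and verify that $v(\sg(f))$ equals the minimum of $p\ag+q\bg$ over $\tS f$. The only nonformal point is the absence of cancellation at the minimal $x_1$-power, which follows because distinct lattice points on a single line of slope $-p/q$ have distinct $\bg$-coordinates, so the polynomials $(y_1+\mu^{p'})^{\bg}$ corresponding to the minimizing terms are linearly independent. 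Setting $N_0 := \min\{p\ag+q\bg : (\ag,\bg)\in\Delta(\Ic)\}$, it follows that $\Ic_1 \subseteq (x_1^{N_0})$, so I can write $\Ic_1 = x_1^{N_0}\Ic_1'$ in all three cases.

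For (1), I would observe that if no face of $\nc(\Ic)$ has supporting line $p\ag+q\bg=N$, the minimum $N_0$ is attained at a unique vertex $v^*=(\ag^*,\bg^*)$ of $\nc(\Ic)$, which must lie in $\tS f_i$ for some generator. For such $f_i$, the coefficient of $x_1^{N_0}$ in $\sg(f_i)$ is $c^i_{v^*}\mu^{q'\ag^*}(y_1+\mu^{p'})^{\bg^*}$, a unit in $\bc[[x_1,y_1]]$; hence $\sg(f_i)=x_1^{N_0}\cdot(\text{unit})$ and $\Ic_1 = (x_1^{N_0})$.

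For (2) and (3), I would set $g_i := \sg(f_i)/x_1^{N_0}$ and study $p_i(y_1) := g_i(0,y_1)$. Only the part of $\sg(f_i)$ coming from $\ti(f_i,l_S)$ survives at $x_1=0$, and writing $\ti(f_i,l_S) = x^{a_i}y^{b_i}F_{f_i}(x^q,y^p)$, a short calculation using $pp'-qq'=1$ gives
\[
p_i(y_1) \;=\; \mu^{q'a_i}(y_1+\mu^{p'})^{b_i}\,F_{f_i}\!\left(\mu^{q'q},(y_1+\mu^{p'})^p\right),
\]
whose order in $y_1$ at the origin is exactly the multiplicity of $\mu$ as a root of $F_{f_i}(1,X)$. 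Since $F_{\Ic,S}$ divides each $F_{f_i}$, this order is at least the multiplicity of $\mu$ in $F_{\Ic,S}(1,X)$. In case (2), where this multiplicity vanishes, I would produce an $f_i$ (enlarging the generating set if needed) whose residual face polynomial $F_{f_i}/F_{\Ic,S}$ does not vanish at $(1,\mu)$; then $p_i(0)\neq 0$, $g_i$ is a unit, and $\Ic_1=(x_1^{N_0})$. In case (3), the same choice forces $\min_i \ord_{y_1}p_i = \nu$, which is the $\bg$-coordinate $\bg_0$ of the upper-left vertex of $\nc(\Ic_1')$; since $\bg_m\geq 0$ and $\nc(\Ic_1)$, $\nc(\Ic_1')$ have the same height, we conclude that the height of $\nc(\Ic_1)$ is at most $\nu$.

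The delicate step common to (2) and (3) is producing a generator whose residual face polynomial is nonvanishing at $(1,\mu)$. In the principal form (\ref{initialideal1}) of $\ti(\Ic,S)$, the $F_{f_i}/F_{\Ic,S}$ are automatically coprime, so some one of them avoids $(1,\mu)$. In form (\ref{initialideal2}), I plan to augment the generating set of $\Ic$ with elements realizing the initial forms $x^{a_S}y^{b_S}F_{\Ic,S}(x^q,y^p)\,k_j(x^q,y^p)$; the coprimality of $k_1,\dots,k_s$ then guarantees some $k_j(1,\mu)\neq 0$, completing the argument.
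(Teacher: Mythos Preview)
The paper does not prove this lemma; it is recalled verbatim from \cite{CV} without argument, so there is nothing to compare your proof against. That said, your argument is correct and complete. The identification of $v(\sg(f))$ with $\min\{p\ag+q\bg:(\ag,\bg)\in\tS f\}$, the uniqueness of the minimizing vertex in case (1), and the computation of $\ord_{y_1}p_i$ via the identity $pp'-qq'=1$ are all handled properly; the augmentation of the generating set in case (\ref{initialideal2}) is legitimate since any weighted-degree-$N_0$ element of $\ti(\Ic,S)$ is a $\bc$-linear combination of the $\ti(f_i,l_S)$ and hence the initial form of an element of $\Ic$. Two cosmetic points: in case (\ref{initialideal1}) the quotients $F_{f_i}/F_{\Ic,S}$ are actually nonzero constants (not merely coprime), which is stronger than what you wrote; and in case (\ref{initialideal2}) some $k_j$ may have $u$- or $v$-factors, but since $\mu\neq 0$ the condition $k_j(1,\mu)\neq 0$ is unaffected when these are absorbed into $a_g,b_g$.
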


\medskip
 \subsection{Newton algorithm}

Given an ideal $\Ic$ in $\bc[[x,y]]$ and a Newton map
$\sg_{(p,q,\mu)}$, we denote by $\Ic_{\sg}$ the ideal
$\sg_{(p,q,\mu)}(\Ic)$. Consider a sequence $\Sg_n=(\sg_1,\cdots,
\sg_n)$ of length $n$ of Newton maps. We define $\Ic _{\Sg_n}$ by
induction:
$$\Ic_{\Sg_1}=\Ic_{\sg_1},\cdots, \Ic_{\Sg_i}=(\Ic_{\Sg_{i-1}})_{\sg_i},\cdots,\Ic_{\Sg_n}=(\Ic_{\Sg_{n-1}})_{\sg_n}  .$$

\medskip
\begin{theorem}\label{Newtonalg}\cite[Theorem 2.10]{CV}
Let $\Ic$ be a non-trivial ideal in $\bc[[x,y]]$.  There exists an
integer $n_0$ such that, for any sequence $\Sg_n=(\sg_1,\cdots,
\sg_n)$ of Newton maps of length at least $ n_0$, the ideal $\Ic
_{\Sg_n}$ is principal, generated by $x^k(y+h(x))^{\nu}$ with $h\in
x\bc [[x]]$ and $(k,\nu)\in \bn\times \bn$.
\end{theorem}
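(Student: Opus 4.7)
The plan is to proceed by strong induction on the height $h := h(\Dg(\Ic))$ of the Newton polygon of $\Ic$, with Lemma~\ref{Newton-algbis} as the main lever. For the base case $h = 0$, the Newton diagram has a unique vertex $(\ag_0, \bg_0)$, and since this vertex lies in $\tS f_i$ for some generator one has $f_i = x^{\ag_0} y^{\bg_0} u_i$ with $u_i$ a unit; as every other generator has support in $(\ag_0, \bg_0) + \br_+^2$, it follows that $\Ic = (x^{\ag_0} y^{\bg_0})$, already of the desired form. Moreover any Newton map $\sg_{(p,q,\mu)}$ sends this monomial ideal to $(x_1^{p\ag_0 + q\bg_0})$, since $(y_1 + \mu^{p'})^{\bg_0}$ is a unit, so the form persists under all further Newton maps and $n_0 = 0$ suffices.

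For the inductive step with $h > 0$, I would apply Lemma~\ref{Newton-algbis} to an arbitrary first Newton map $\sg_{(p,q,\mu)}$. In its cases (1) and (2), $\sg(\Ic) = (x_1^N)$, which is already in the desired form and stable under further Newton maps by the base case. In case (3), the new height is bounded by the multiplicity $\nu_\mu$ of $\mu$ as a root of $F_{\Ic,S}(1,X)$; the bound $\nu_\mu \le \deg_y F_{\Ic, S} \le h/p_S$ yields a strict decrease $\nu_\mu < h$ as soon as $p_S \ge 2$, or $\nc(\Ic)$ has several faces of positive height, or $F_{\Ic, S}(1, X)$ has more than one distinct root, and in these subcases the inductive hypothesis applied to $\sg(\Ic)$ furnishes the desired bound.

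The genuine obstacle is the residual subcase $\nu_\mu = h$, where $\nc(\Ic)$ is forced to consist of a single face $S$ with $p_S = 1$ and $F_{\Ic, S}(1, X) = c(X - \mu)^h$. Expanding $\sg_{(1, q_S, \mu)}(\Ic)$ directly gives $(x_1^{N_0}) \Ic'_1$, where $\Ic'_1$ contains $y_1^h$ (up to a unit) plus terms of strictly greater $x_1$-valuation. Either $\Ic$ already has the form $(x^k (y + \varphi(x))^h)$ with $\varphi \in x\bc[[x]]$, in which case a direct computation (analogous to the monomial case) shows that every Newton map preserves the desired form; or the higher-order contributions in $\Ic'_1$ give rise to a new face polynomial that either has a larger $p_{S'}$ or has several distinct roots, placing the next step into a generic subcase and triggering the inductive hypothesis.

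Finally, at each step only finitely many Newton maps are \emph{effective} — all others fall into cases (1)--(2) and terminate immediately — so the tree of possibly non-terminated histories is finitely branching; combined with the height-plus-secondary-invariant argument above, no infinite branch exists, and König's lemma yields a uniform $n_0$. The hard part of the proof is precisely the analysis of the subcase $\nu_\mu = h$: one must identify a secondary invariant, essentially measuring the $x$-order of the deviation of $\Ic$ from an ideal of the form $(x^k(y + \varphi(x))^h)$, that strictly decreases along any otherwise-stuck chain of Newton maps, thereby confirming termination.
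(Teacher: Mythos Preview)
The paper does not prove this statement: Theorem~\ref{Newtonalg} is quoted verbatim from \cite[Theorem~2.10]{CV} and no argument is given here, so there is no ``paper's own proof'' to compare against. Your outline is the natural one and matches the standard strategy (induction on the height, driven by Lemma~\ref{Newton-algbis}).

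That said, your handling of the residual subcase $\nu_\mu = h$ is not yet a proof. The dichotomy you state---``either $\Ic$ is already of the form $(x^k(y+\varphi(x))^h)$, or the next step lands in a generic subcase''---is false as written: after one Newton map you can land again in a residual subcase (single face, $p_{S'}=1$, single root of full multiplicity), and this can repeat arbitrarily many times. Your final paragraph concedes this by appealing to an unspecified ``secondary invariant'' that you neither define nor show to be well-ordered. But this is exactly the content of the theorem: you must prove that an infinite chain of residual steps forces $\Ic=(x^k(y+\varphi(x))^h)$ for some $\varphi\in x\bc[[x]]$. One clean way is to observe that each residual step corresponds to a substitution $y\mapsto y+\mu x^{q}$ with strictly increasing exponents $q$, so an infinite chain determines a series $\varphi(x)=\sum \mu_j x^{q_j}$, and then to check directly that the generator(s) become $x^k(y-\varphi(x))^h$ times units; alternatively, invoke Weierstrass preparation on a suitable generator. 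Until that step is written out, the proposal is a sketch with the essential difficulty still open.
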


\noindent
{\bf Example 1.}
We consider in $\bc[[x,y]]$ the ideal
$$\Ic=\left(y^4(y+x)(y^2-3x),((y+x)^3+x^8)(y^2-3x)\right).$$
Its Newton polygon is given in Figure 1.

  \begin{figure}[ht]
 \begin{center}
\includegraphics{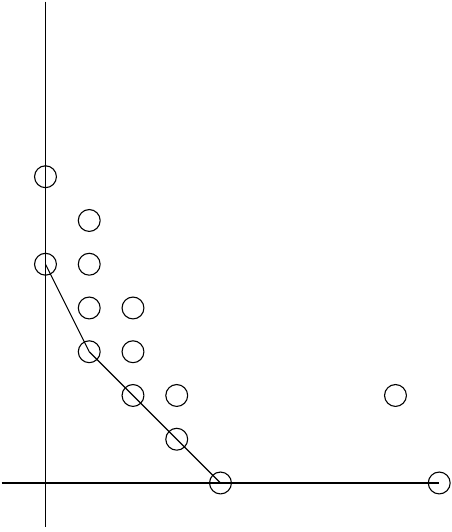}
\caption{}
 \end{center}
 \end{figure}

The faces $S_1$ and $S_2$ have supporting lines with equations $2\alpha+\beta=5$ and $\alpha+\beta=4$, respectively. The initial ideals of $\Ic$ with respect to these segments are
$$\ti (\Ic,S_1) = \left(y^3(y^2-3x)\right) \qquad\text{and}\qquad \ti (\Ic,S_2)= \left(x(y+x)^3\right).$$
Both segments are not dicritical; their face polynomials are $F_{\Ic,S_1}=y^2-3x$ and $F_{\Ic,S_2}=(y+x)^3$, respectively.
We first consider the Newton map $\sg_{(p,q,\mu)}=\sg_{(2,1,3)}$ associated to $S_1$ and $\mu=3$. It is given by the substitution
$$x=3x_1^2, \qquad\qquad y=x_1(y_1+3).$$
The image ideal $\Ic_1$ is given by
$$
\begin{aligned}
\Ic_1 &= \left(x_1^4 (y_1+3)^4(x_1y_1+3x_1+3x_1^2)(x_1^2(y_1+3)^2-9x_1^2)\right. ,\\
 &\qquad\qquad\qquad\qquad\qquad\qquad\left. ((x_1y_1+3x_1+3x_1^2)^3+3^8x_1^{16})(x_1^2(y_1+3)^2-9x_1^2)\right)\\
&=\left(x_1^7(y_1^2+6y_1),x_1^5(y_1^2+6y_1)\right)\\
&= (x_1^5y_1).
\end{aligned}
$$
It is a monomial ideal, hence we stop the procedure for $S_1$.

Next we consider the Newton map $\sg_{(p,q,\mu)}=\sg_{(1,1,-1)}$ associated to $S_2$ and $\mu=-1$. It is given by the substitution
$$x=x_1, \qquad\qquad y=x_1(y_1-1).$$
The image ideal $\Ic_1$ is given by
$$
\begin{aligned}
\Ic_1 &= \left(x_1^4(y_1-1)^4 x_1y_1(x_1^2(y_1-1)^2-3x_1), (x_1^3y_1^3+x_1^8)(x_1^2(y_1-1)^2-3x_1)\right)\\
&=\left(x_1^6y_1,x_1^4(y_1^3+x_1^5)\right)\\
&= x_1^4(x_1^2y_1,y_1^3+x_1^5).
\end{aligned}
$$
Its Newton polygon is given in Figure 2.

  \begin{figure}[ht]
 \begin{center}
\includegraphics{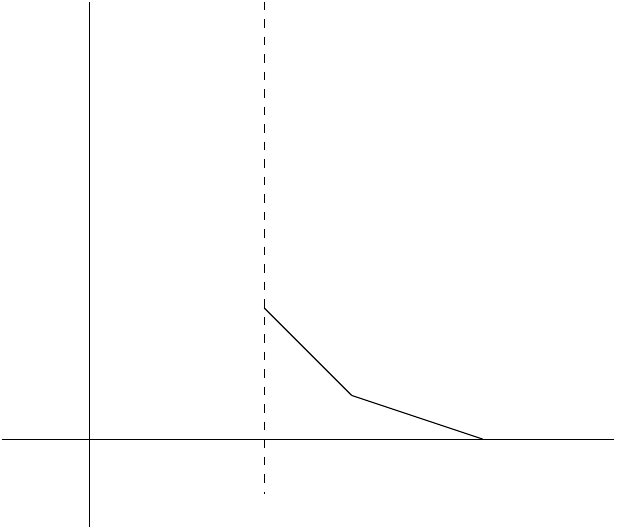}
\caption{}
 \end{center}
 \end{figure}

The faces $S'_1$ and $S'_2$ have supporting lines with equations $\alpha+\beta=7$ and $\alpha+3\beta=9$, respectively. The initial ideals of $\Ic_1$ with respect to these segments are
$$\ti (\Ic_1,S'_1) = x_1^4y_1(x_1^2,y_1^2) \qquad\text{and}\qquad \ti (\Ic_1,S'_2)= x_1^6(y_1,x_1^3).$$
Both segments are dicritical and have constant face polynomial; their degrees are $d_{S'_1}=2$ and $d_{S'_2}=1$.

We continue with the Newton maps $\sg_{(1,1,\mu)}$ associated to
$S'_1$ and $\sg_{(1,3,\mu)}$ associated to $S'_2$, where $\mu$ is
arbitrary in $\bc^*$. These lead to monomial ideals, hence we stop
the procedure.

\medskip
It should be clear that more generally, when a face $S$ of a
Newton polygon is dicritical, the associated Newton map
$\sg_{(p,q,\mu)}$ induces a monomial ideal for all $\mu\in \bc^*$
that are not roots of the face polynomial $F_{\Ic,S}$. Therefore it
is not necessary to compute such Newton maps explicitly.

\bigskip
In \cite{CV} we introduced a notion of depth of an ideal: the least
possible value of $n_0$ in Theorem \ref{Newtonalg}. This can also be
formulated as follows.

\medskip
\begin{definition}\label{def-depth} Let $\Ic$ be a non-trivial ideal in $\bc[[x,y]]$.
We define the {\it depth} of $\Ic$, denoted by $d(\Ic)$, by
induction. If $\Ic$ is principal, generated by $x^k(y+h(x))^{\nu}$
with $h\in x\bc[[x]]$ and $(k,\nu)\in \bn\times \bn$, we say that
its depth is $0$. Otherwise, we define
$$d(\Ic)=\max d(\Ic_{\sg})+1 ,$$
where the maximum is taken over all possible Newton maps.
\end{definition}

\medskip
\section{Computation of the motivic zeta function of an ideal}

\medskip
In this section we describe an efficient algorithm for the motivic
zeta function of an ideal in $\bc[[x,y]]$ in terms of the Newton
algorithm. The method is the one used in \cite{ACLM1}\cite{ACLM2}
for quasi-ordinary hypersurface singularities.

First we introduce the ingredients needed to define the motivic zeta
function.

\medskip
 Let $\Gc$ be the Grothendieck ring of algebraic varieties
over $\bc$. It is the abelian group generated by the symbols $[S]$
where $S$ is an algebraic variety over $\bc$, with the relations
$[S]=[S']$ when $S$ and $S'$ are isomorphic, and
$$[S]=[S\setminus S']+[S']$$
when $S'$ is closed in $S$. The product is defined by
$$[S]\cdot [S']=[S\times S'].$$
Let $\bl=[\ba_{\bc}^1]$ and $\Gc_{\text{loc}}=\Gc[\bl ^{-1}]$.

\smallskip
We recall briefly the notions of jet scheme and arc scheme. A
reference in the context of  arbitrary schemes is \cite{NS}. The
functor $\cdot \times_{\spec \bc} \spec \bc[t]/(t^{n+1})$ on the
category of $\bc$-schemes has a right adjoint, denoted by $\cl_n$.
We call $\cl_n(X)$ the $n$-jet scheme of $X$ and its closed points
$n$-jets on $X$. For $m\geq n$, the closed immersions $\spec
\bc[t]/(t^{n+1}) \hookrightarrow \spec \bc[t]/(t^{m+1})$, defined by
reduction modulo $t^{n+1}$, induce canonical (projection) morphisms
$\pi^m_n: \cl_m(X)\to\cl_n(X)$. These morphisms being affine, the
projective limit
$$\cl(X):= \lim_{\longleftarrow} \cl _n(X)$$ exists as a
$\bc$-scheme; it is called the arc scheme of $X$ and its
$\bc$-points are called arcs on $X$. Denote by $\pi _n: \cl (X)
\longrightarrow \cl_n(X)$ the natural projection morphisms.

From now on we take $X=\spec \bc[[x,y]]$ and we consider only those
arcs and jets attached at the origin, that is, mapped by $\pi_0$ or
$\pi^n_0$ to the origin in $X=\cl_0(X)$. We denote these schemes
(and also the sets of their $\bc$-points) by $\cl^0(\bc^2)$ and
$\cl^0_n(\bc^2)$, respectively. Note that they are isomorphic to
$\spec \bc [a_i, b_i]_{i\in \bn^*}$ and $\spec \bc [a_1,
b_1,\dots,a_n,b_n]$, respectively. For example an $n$-jet is denoted
as $(a_1 t + a_2 t^2 + \cdots + a_n t^n, b_1 t + b_2 t^2 + \cdots +
b_n t^n)$.

\smallskip

A {\em cylinder} in
 $\cl^0 (\bc ^2)$ is a subset of the form $A=(\pi_n)^{-1}(C)$, with
 $C$ a constructible subset of $\cl^0_n(\bc^2)$, for some $n \in
 \bn$. For such a set $A$ we have that $[\pi_m(A)]\bl^{-2m}=[\pi_n(A)] \bl^{-2n}=[C]\bl^{-2n}$ for all $m\geq n$, and one defines its {\em motivic
measure} $\mu(A)$ as this value:
$$\mu(A)=[\pi_n(A)]\bl^{-2n} \in \Gc_{\text{loc}}.$$

\bigskip
Let $\Ic$ be a non-trivial ideal in $\bc[[x,y]]$. We write
$\Ic=(x^N)\Ic'$, where $\Ic'$ is not divisible by $x$. Let $\og$ be
a regular differential $2$-form of the form
$$\og=x^{\nu-1}dx \wedge dy.$$
We assume that if $N=0$, then $\nu=1$.

 For $\phi\in \cl^0(\bc^2)$ we define
$$\text{ord}_{\phi } \Ic=\min \{\text{ord}(f \circ \phi ) \mid f \in \Ic\}.$$
For $m\in\bn$ and $n\in \bn^*$ we consider
 $$V_{n,m}=\{\phi\in
\cl^0(\bc^2) \mid \text{ord}_\phi \Ic=n,\ \text{ord}(\og \circ \phi
)=m\}.$$
 Note that the motivic measure of this collection of arcs is completely determined by the
 corresponding collection of $n$-jets
$$\pi_n(V_{n,m})=\{\phi\in \cl^0_n(\bc^2) \mid \text{ord}_\phi \Ic=n,\
\text{ord}(\og \circ \phi )=m\}.$$
 The motivic zeta function of
$\Ic$ and $\omega$ is
$$\zg(\Ic,\og)(T)=\sum_{n\geq 1}\big(\sum_{m\geq 0}\mu(V_{n,m})\bl^{-m}\big)T^n \in \Gc_{\text{loc}}[[T]]. $$
 The hypothesis on $\omega$ ensures that the sum over $m$ is finite.

The motivic zeta function was originally introduced for a principal
ideal $\Ic$ (in a polynomial ring) \cite{DL1}. The generalization to
arbitrary ideals is straightforward (see for instance \cite{VZ}).
Incorporating also a differential form is natural in the process of
studying or computing a motivic zeta function, see e.g.
\cite{ACLM2}\cite{V2}.

An important result in \cite{DL1} is that motivic zeta functions are
in fact rational functions (in $T$); this is proven by providing a
formula in terms of a chosen embedded resolution of singularities.
The proof extends to zeta functions associated to a general ideal
and a differential form. However, in that formula generally a large
amount of false candidate poles occur. In dimension 2 the results of
\cite{V1} provide in principle a compact formula with only actual
poles in the denominator, but the computation is a combinatorial
disaster.

\bigskip
We now start deriving an efficient algorithmic way to compute
$\zg(\Ic,\og)(T)$, based on the Newton algorithm, yielding in
particular an \lq almost minimal\rq\ denominator.  (In the next
section we explain a slight modification resulting in the optimal
denominator.)

 First we consider the case
where the ideal $\Ic$ is principal, generated by a monomial:
 $$\Ic=(x^{N_1}y^{N_2}).$$
 Let $$
 \phi(t)=\left \{
\begin{matrix}
 &x&=& c_1t^{k_1}+\cdots+a_nt^n + \cdots\\
 &y&=& c_2t^{k_2}+\cdots+b_nt^n + \cdots
 \end{matrix}
  \right . $$
be an arc in $V_{n,m}$, where $c_1,c_2\in \bc^*$.
  We have
  $$\text{ord}_\phi \Ic=k_1N_1+k_2N_2=n,$$
  $$\text{ord}(\og \circ \phi )=(\nu-1)k_1=m,$$
 $$ \mu(V_{n,m})=[\pi_n(V_{n,m})]\bl^{-2n}.$$
 Then
 $$
 \aligned \zg(\Ic,\og)(T)&=(\bl-1)^2\sum_{k_1\geq1,k_2\geq 1}\bl^{n-k_1}\bl^{n-k_2}\bl^{-2n}\bl^{-(\nu-1)k_1}T^n \\
 &=(\bl-1)^2\sum_{k_1\geq1,k_2\geq 1}\bl^{-\nu k_1-k_2}T^{k_1N_1+k_2N_2} \\
 &=(\bl-1)^2\frac{\bl^{-\nu}T^{N_1}}{1-\bl^{-\nu}T^{N_1}}\frac{\bl^{-1}T^{N_2}}{1-\bl^{-1}T^{N_2}}.
 \endaligned
 $$

\bigskip
\begin{remark}
Above we tacitly assume that $N_1$ or $N_2$ can be zero. In that
case a  factor of the form $1-\bl^{-\nu}$ appears in the denominator
of the formula. More generally, in the rest of this section,
formulas for $\zg(\Ic,\og)(T)$ as a rational function in $T$ have
coefficients in a further localization of $\Gc_{\text{loc}}$, namely
with respect to the elements $1-\bl^{-a}, a\in \bz^*$. This is
standard in such computations.
\end{remark}

\bigskip
To compute the zeta function of an arbitrary ideal,  we  use
induction on the depth of the ideal. If the ideal has depth $0$, we
already computed its zeta function (since it does not depend on the
chosen coordinate system). For an ideal $\Ic$ of depth at least $1$,
we derive a formula computing the zeta function of $\Ic$ in terms of
the zeta functions of the ideals $\Ic_{\sg}$.  First we collect the
necessary data and notation.

\begin{definition}\label{def-cones} \rm
Let $\Ic$ be a non-trivial ideal in $\bc[[x,y]]$ and
$\omega=x^{\nu-1}dx\wedge dy$ with $\nu\in\bn^*$. If $\Ic$ is not
divisible by $x$, we assume that $\nu=1$.

Consider the Newton polygon of $\Ic$ and its dual in $\br_+^2$. Here
a face $S$ of the Newton polygon of $\Ic$ with equation
$p\ag+q\bg=N$ corresponds in the dual to a half-line going through
the origin with direction $(p,q)$. This induces a decomposition of
$\br_+^2$ into disjoint cones (of dimension $1$ or $2$).

(1) We denote by $\Dg$ such a $2$-dimensional cone, and let it be
generated by $(p_1,q_1)$ and $(p_2,q_2)$. Then we denote also
 $$\pc_{\Dg}=\{(i,j)\in \bn^2\mid i=\mu_1p_1+\mu_2p_2,j=\mu_1q_1+\mu_2q_2, (\mu_1,\mu_2)\in \bq^2, 0<\mu_1\leq1,0<\mu_2\leq1\}$$
and
 $$D_{\Dg}=\sum_{(i,j)\in \pc_{\Dg}}\bl^{-(\nu i+j)}T^{ia+jb}.$$

(2) We denote by $L$ such a $1$-dimensional cone, and let it be
generated by $(p,q)$.
 \end{definition}

\noindent Note that in (1) the set of points with integer
coordinates in the cone $\Dg$ is precisely
 $$\{(i,j)\in \bn^2\mid i=i_0+k_1p_1+k_2p_2,j=j_0+k_1q_1+k_2q_2,(i_0,j_0)\in \pc_{\Dg},(k_1,k_2)\in \bn^2\}.$$

\bigskip
 \noindent We study the contributions of elements
$(k_1,k_2)\in \bn^2$, depending on which cone they belong to.

 \begin{enumerate}
 \item {\em Assume that $(k_1,k_2)$ lies in a cone $\Dg$ generated by $(p_1,q_1),(p_2,q_2)$.} Let $(a,b)$ be the intersection point of the two faces of the Newton polygon whose supporting lines have equations
$ p_1 \ag+q_1\bg =N_1$ and  $ p_2 \ag+q_2\bg =N_2$.  Let $\phi \in
\cl^0 (\bc^2):$
 $$
 \phi (t)=\left \{
\begin{matrix}
 &x&=& c_1t^{k_1}+\cdots+a_nt^n+\cdots\\
 &y&=& c_2t^{k_2}+\cdots+b_nt^n+\cdots
 \end{matrix}
  \right . $$
where $c_1,c_2\in \bc^*$, such that $\text{ord}_\phi \Ic=n$. We have
   $$\text{ord}_\phi \Ic=k_1a+k_2b=n,$$
  $$\text{ord}(\og \circ \phi )=(\nu-1)k_1.$$
  The contribution of $\Delta$ to $\zg(\Ic,\og)(T)$ is

  $$
\aligned  &(\bl-1)^2\sum_{(k_1,k_2)\in \Dg\cap \bn^2}\bl^{n-k_1}\bl^{n-k_2}\bl^{-2n}\bl^{-(\nu-1)k_1}T^n\\
   =\ &(\bl-1)^2\sum_{(k_1,k_2)\in \Dg\cap \bn^2}\bl^{-\nu
   k_1-k_2}T^{k_1a+k_2b}\\
=\ &(\bl-1)^2 D_{\Dg}  \sum_{(n_1,n_2)\in \bn^2} \bl ^{-[(p_1\nu+q_1)n_1+(p_2\nu+q_2)n_2]}T^{(p_1a+q_1b)n_1+(p_2a+q_2b)n_2}\\
=\ &(\bl-1)^2 D_{\Dg}
\frac{1}{(1-\bl^{-(p_1\nu+q_1)}T^{N_1})(1-\bl^{-(p_2\nu+q_2)}T^{N_2})}.
\endaligned$$

\item {\em Assume that $(k_1,k_2)$ lies on a line $L$, generated by $(p,q)$.}
Then there exists $k\in \bn^*$ such that $k_1=pk, k_2=qk$. Let $S$
be the corresponding face of the Newton polygon, with equation
$p\alpha+q\beta=N$, and let $F_{S}$ be its face polynomial.  Let
$\phi \in \cl^0 (\bc^2):$
$$
 \phi (t)=\left \{
\begin{matrix}
 &x&=& c_1t^{pk}+\cdots+a_nt^n+\cdots\\
 &y&=& c_2t^{qk}+\cdots+b_nt^n+\cdots
 \end{matrix}
  \right . $$
  where $c_1,c_2\in \bc^*$, such that $\text{ord}_\phi \Ic=n$.
We have two cases.
\smallskip
\begin{enumerate}
\item For any root $\mu$ of $F_{S}$, we have $c_2^p-\mu c_1^q\neq 0$. In this case,
  $$\text{ord}_\phi \Ic=Nk,$$
  $$\text{ord}(\og \circ \phi )=p(\nu-1)k.$$
Denoting by $r$ the number of distinct roots of $F_{S}$, we compute
this contribution of $L$ to $\zg(\Ic,\og)(T)$. It is equal to
$$((\bl-1)^2 - C) \sum_{k\in \bn^*}\bl^{n-pk+n-qk-p(\nu-1)k}\bl^{-2n}T^{Nk},$$
where $C$ is the class in the Grothendieck ring of
$$
\{(c_1,c_2)\in (\bc^*)^2 \mid c_2^p-\mu_j c_1^q= 0 \ \text{for some
root} \ \mu_j \ \text{of}\ F_S\}.
$$
Hence this contribution is
$$\aligned
&((\bl-1)^2-r(\bl-1))\sum_{k\in \bn^*}\bl^{-(p\nu+q)k}T^{Nk}\\
=\ & (\bl-1)(\bl-r-1)\frac{\bl ^{-(p\nu+q)}T^N}{1-\bl^{- (p\nu
+q)}T^N}.
\endaligned $$

\smallskip
\item There exists a root $\mu$ of $F_{l_S}$ such that $c_2^p-\mu c_1^q= 0$. Fix such a root $\mu$.
We need to compute
 $$\sum_{n\geq 1}\sum_{k\geq 1}[X_{n,(\nu-1)pk}]\bl^{-2n}\bl^{-(\nu-1)pk}T^n,$$
where
  $X_{n,(\nu-1)pk}$ is the variety of  $n$-jets $\phi:$
$$ \phi (t)=\left \{
\begin{matrix}
 &x&=& a_{pk}t^{pk}+\cdots+a_nt^n\\
 &y&=& b_{qk}t^{qk}+\cdots+b_nt^n
 \end{matrix}
 \right.$$
such that $\text{ord}_{\phi}\Ic =n$, $\text{ord}( \og \circ \phi)=(\nu-1)pk$ and $b_{qk}^p -\mu a_{pk}^q=0$.

 Denote by $\overline{X}_{n,(\nu-1)pk}$ the variety of $n$-jets $\overline{ \phi}:$
$$\overline{ \phi} (t)=\left \{
\begin{matrix}
 &\overline{\phi}_1&=& a'_0+a'_1t+\cdots+a'_nt^n\\
 &\overline{\phi}_2&=& b'_0+b'_1t+\cdots+b'_nt^n
 \end{matrix}
 \right.$$
such that $\text{ord}_{(t^{pk}\overline{\phi}_1,t^{qk}\overline{\phi}_2)}\Ic =n$,  $\text{ord}( \og \circ (t^{pk}\overline{\phi}_1,t^{qk}\overline{\phi}_2))=(\nu-1)pk$ and $(b'_{0})^{p} -\mu (a'_{0})^{q}=0$.
 We have
 $$[X_{n,(\nu-1)pk}]=[\overline{X}_{n,(\nu-1)pk}]\mathbb{L}^{-pk-qk}.$$

\smallskip
Next we consider the appropriate varieties of jets related to the Newton map $\sigma=\sigma_{(p,q,\mu)}$.
Let $\og_1=x_1^{p\nu+q-1}dx_1\wedge dy_1$.
 Denote by $\overline{X}^{\sg}_{n,(p\nu+q-1)k}$ the variety of $n$-jets $\overline{ \psi}:$
$$\overline{ \psi} (t)=\left \{
\begin{matrix}
 &\overline{\psi}_1&=& r_0+r_1t+\cdots+r_nt^n\\
 &\overline{\psi}_2&=& s_1t+\cdots+s_nt^n
 \end{matrix}
 \right.$$
such that $r_0\neq 0$,  $\text{ord}_{(t^k\overline{\psi}_1,\overline{\psi}_2)}\Ic_{\sg} =n$ and $\text{ord}( \og_1 \circ (t^k\overline{\psi}_1,\overline{\psi}_2))=(p\nu+q-1)k$.
 Finally we denote by $X^{\sg}_{n,(p\nu +q-1)k}$ the variety of $n$-jets $\psi:$
$$ \left\{ \begin{matrix}
 &{\psi}_1&=& r'_kt^k+\cdots+r'_nt^n\\
 &{\psi}_2&=& s'_1t+\cdots+s'_nt^n
 \end{matrix}
 \right.$$
such that $r'_k\neq 0$,  $\text{ord}_{({\psi}_1,{\psi}_2)}\Ic_{\sg} =n$ and $\text{ord}( \og_1 \circ ({\psi}_1,{\psi}_2))=(p\nu+q-1)k$.
  We have
  $$[\overline{X}^{\sg}_{n,(p\nu+q-1)k}]=[X^{\sg}_{n,(p\nu +q-1)k}]\mathbb{L}^k.$$

We construct a map $S$ from $\overline{X}^{\sg}_{n,(p\nu+q-1)k}$ to
$\overline{X}_{n,(\nu-1)pk}$ in order to compare
$[\overline{X}_{n,(\nu-1)pk}]$ and
$[\overline{X}^{\sg}_{n,(p\nu+q-1)k}]$.

For $\overline{ \psi}\in \overline{X}_{n,(p\nu +q-1)k}$ given by
$$\overline{ \psi} (t)=\left \{
\begin{matrix}
 &\psi_1&=&r_0+r_1t+\cdots +r_nt^n\\
 &\psi_2&=&s_1t+\cdots+s_nt^n
 \end{matrix}
 \right.$$
we define
 $$\textit{S}(\overline{ \psi}) (t)=\left \{
\begin{matrix}
 &\overline{\phi}_1&=&\mu^{q'}\overline{\psi}_1^p \mod t^{n+1} \\
 &\overline{\phi}_2&=&\overline{\psi}_1^q(\overline{\psi}_2+\mu^{p'}) \mod t^{n+1}.
 \end{matrix}
 \right.$$

Then one easily verifies that
 $$\text{ord}_{(t^k\overline{\psi}_1,\overline{\psi}_2)}\Ic_{\sg} =n=\text{ord}_{(t^{pk}\overline{\phi}_1,t^{qk}\overline{\phi}_2)}\Ic.$$

The constant term of $\overline{\phi}_1$ is $a=\mu^{q'}r_0^p$ and
the constant term of $\overline{\phi}_2$ is $b=\mu^{p'}r_0^q$. Hence
we have $b^p=\mu a^q$. Note also that $S$ is clearly a morphism.

 \begin{lemma}
The map $\textit{S}$ defines a isomorphism between
$\overline{X}^{\sg}_{n,(p\nu+q-1)k}$ and
 $\overline{X}_{n,(\nu-1)pk}$.
  \end{lemma}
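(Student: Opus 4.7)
The plan is to exhibit an explicit inverse morphism, after verifying that $S$ is well-defined. I would first check that $S(\overline{\psi})$ lies in $\overline{X}_{n,(\nu-1)pk}$. Denoting by $a$ and $b$ the constant terms of $\overline{\phi}_1$ and $\overline{\phi}_2$, we have $a = \mu^{q'} r_0^p$ and $b = \mu^{p'} r_0^q$, both nonzero; using $pp' - qq' = 1$ one computes $b^p = \mu^{pp'} r_0^{pq} = \mu^{1+qq'} r_0^{pq} = \mu a^q$, so the required relation $(b_0')^p = \mu (a_0')^q$ holds. The equality of orders $\ord_\phi \Ic = \ord_\psi \Ic_\sg$, where $\phi = (t^{pk}\overline{\phi}_1, t^{qk}\overline{\phi}_2)$ and $\psi = (t^k\overline{\psi}_1, \overline{\psi}_2)$, follows from $\phi = \sg \circ \psi$, since for any $f \in \Ic$ one has $f \circ \phi = \sg(f) \circ \psi$. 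The conditions on the differential forms are automatic, since $\ord(\omega \circ \phi)$ equals $(\nu-1) \cdot pk$ and $\ord(\omega_1 \circ \psi) = (p\nu+q-1) \cdot k$ whenever $a$ and $r_0$ are nonzero.

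For the inverse, given $(\overline{\phi}_1, \overline{\phi}_2) \in \overline{X}_{n,(\nu-1)pk}$ with constant terms $a, b \in \bc^*$ satisfying $b^p = \mu a^q$, I would set $r_0 = a^{p'} b^{-q'}$. A short algebraic check using $pp' - qq' = 1$ gives $r_0^p = \mu^{-q'} a$ and $r_0^q = \mu^{-p'} b$; since $\gcd(p,q) = 1$, these two equations uniquely determine $r_0$. Next I would define $\overline{\psi}_1$ as the unique polynomial $r_0 + r_1 t + \cdots + r_n t^n$ in $\bc[t]/(t^{n+1})$ satisfying $\mu^{q'} \overline{\psi}_1^p \equiv \overline{\phi}_1 \pmod{t^{n+1}}$. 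The coefficients $r_i$ for $i \geq 1$ are recovered inductively: matching the coefficient of $t^i$ yields $p r_0^{p-1} r_i = P_i(r_0, \ldots, r_{i-1}, a, a_1', \ldots, a_i')$ for an explicit polynomial $P_i$, which is solvable since $p r_0^{p-1} \neq 0$, and the solution is algebraic in the entries. Finally I would set $\overline{\psi}_2 = \overline{\phi}_2 \overline{\psi}_1^{-q} - \mu^{p'}$, well-defined since $\overline{\psi}_1$ is a unit; its constant term is $b r_0^{-q} - \mu^{p'} = 0$, so $\overline{\psi}$ indeed lies in $\overline{X}^{\sg}_{n,(p\nu+q-1)k}$.

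The expected main obstacle is the $p$-th root extraction needed to invert $S$; the inductive procedure above bypasses this by fixing the constant term algebraically and solving the remaining coefficients recursively. This realizes the inverse as a morphism on the coordinate ring of $\overline{X}_{n,(\nu-1)pk}$ (where $a$ and $b$ are invertible by definition). Verifying that $S$ and this map are mutually inverse then reduces to direct substitution using the identities $r_0^p = \mu^{-q'} a$, $r_0^q = \mu^{-p'} b$, and $pp' - qq' = 1$.
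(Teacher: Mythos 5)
Your proposal is correct and follows essentially the same route as the paper: both exhibit the explicit inverse whose first component has constant term $a^{p'}b^{-q'}$ times a $p$-th root of the unit part of $\overline{\phi}_1$ (your recursive determination of the $r_i$ is just the paper's ``standard expansion'' of $(\tilde{\phi}_1)^{1/p}$ made coefficient-wise explicit), and whose second component is $\overline{\phi}_2\overline{\psi}_1^{-q}-\mu^{p'}$, which coincides with the paper's formula $-\mu^{p'}+\mu^{p'}\tilde{\phi}_2/(\tilde{\phi}_1)^{q/p}$ via $r_0^q=\mu^{-p'}b$. Your additional verification that $S$ is well-defined (the relation $b^p=\mu a^q$ and the order conditions) is carried out in the paper in the paragraph preceding the lemma, so nothing is missing.
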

 \begin{proof}
We construct an inverse map. Take
 $$\overline{ \phi} (t)=\left \{
\begin{matrix}
 &\overline{\phi}_1(t)&=& a'_0+a'_1t+\cdots+a'_nt^n=a'_0\tilde{\phi}_1(t)\\
 &\overline{\phi}_2(t)&=& b'_0+b'_1t+\cdots+b'_nt^n=b'_0\tilde{\phi}_2(t)
 \end{matrix}
 \right.$$

 in $ \overline{X}_{n,(\nu-1)pk}$. Then

  $$\overline{ \psi} (t)=\left \{
\begin{matrix}
 &\overline{\psi}_1&=& a_0^{'p'}/b_0^{'q'}(\tilde{\phi}_1)^{1/p} \mod t^{n+1}\\
 &\overline{\psi}_2&=&
 -\mu^{p'}+\mu^{p'}\tilde{\phi}_2/(\tilde{\phi}_1)^{q/p} \mod t^{n+1}
 \end{matrix}
 \right.$$

belongs to $\overline{X}^{\sg}_{n,(p\nu+q-1)k}$. Here
$(\tilde{\phi}_1)^{1/p}$ is well-defined and given by the standard
expansion since the constant term of $\tilde{\phi}_1$ is $1$.
One easily verifies that this map
$\overline{\phi}\mapsto\overline{\psi}$ is indeed the inverse of
$S$.

 \end{proof}



\medskip
With these preparations we can compute
$$\aligned
&\sum_{n\geq 1}\sum_{k\geq 1}[X_{n,(\nu-1)pk}]\bl^{-(\nu-1)pk}\bl^{-2n}T^n\\
=& \sum_{n\geq 1}\sum_{k\geq 1}[\overline{X}_{n,(\nu-1)pk}]\bl^{-qk} \bl^{-\nu pk}\bl^{-2n}T^n\\
=& \sum_{n\geq 1}\sum_{k\geq 1}[ \overline{X}^{\sg}_{n,(p\nu+q-1)k}]\bl^{-(q+p\nu)k}\bl^{-2n}T^n\\
=&  \sum_{n\geq 1}\sum_{k\geq 1}[ X^{\sg}_{n,(p\nu+q-1)k}]\bl^{-(p\nu+q-1)k}\bl^{-2n}T^n \\
=&  \sum_{n\geq1}(\sum_{m}\mu(V_{n,m}^{\sg})\bl^{-m})T^n,
\endaligned$$
  where $V_{n,m}^{\sg}$ is the set of arcs $\phi \in \cl^0(\bc^2)$ such that
  $\text{ord}_{\phi}\Ic_\sigma=n$ and $\text{ord} (\og_1\circ \phi)=m$. This last expression is precisely
$\zg(\Ic_\sg,\og_1)(T)$.

 \end{enumerate}
 \end{enumerate}
We have proven the following formula.

 \begin{theorem}\label{formulazetafunction}
Let $\Ic$ and $\omega$ be as in definition \ref{def-cones}. With the
notation introduced there, we have
   $$\zg(\Ic,\og)(T)=\sum_{\Delta} (\bl-1)^2 D_{\Dg}  \frac{1}{(1-\bl^{-(p_1\nu+q_1)}T^{N_1})(1-\bl^{-(p_2\nu+q_2)}T^{N_2})}$$
 $$ + \sum_L (\bl-1)(\bl-r-1)\frac{\bl ^{-(p\nu+q)}T^N}{1-\bl^{- (p\nu +q)}T^N}$$
 $$+\sum_L\sum_{\mu}\zg(\Ic_{\sg_{(p,q,\mu)}},\og_1)(T),$$
where $\mu$ ranges over the roots of the face polynomial of the face
corresponding to the cone $L$, and $\og_1=x_1^{p\nu+q-1}dx_1\wedge
dy_1$.
 \end{theorem}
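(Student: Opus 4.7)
The plan is to decompose the arc space $\cl^0(\bc^2)$ according to the bi-degree $(k_1,k_2)=(\ord_t \phi_1,\ord_t \phi_2)\in \bn_{>0}^2$ of an arc $\phi=(\phi_1,\phi_2)$, and to use the fact that the dual fan of $\nc(\Ic)$ subdivides $\br_+^2$ into two-dimensional cones $\Dg$ (dual to vertices of $\nc(\Ic)$) and one-dimensional cones $L$ (dual to faces). Each arc falls into exactly one such cone, so $\zg(\Ic,\og)(T)$ splits into a sum of contributions indexed by $\Dg$ and $L$, and I will show that these contributions equal the three summands in the statement.

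For a two-dimensional cone $\Dg$ generated by $(p_1,q_1),(p_2,q_2)$ whose dual faces meet at a vertex $(a,b)$, the order $\ord_\phi \Ic$ is linear: $k_1a+k_2b$, and $\ord(\og\circ\phi)=(\nu-1)k_1$. The class of $n$-jets with fixed $(k_1,k_2)$ and nonzero leading coefficients is $(\bl-1)^2\bl^{n-k_1}\bl^{n-k_2}$, so each lattice point contributes $(\bl-1)^2\bl^{-\nu k_1-k_2}T^{k_1a+k_2b}$. Decomposing $\Dg\cap\bn^2=\pc_\Dg+\bn(p_1,q_1)+\bn(p_2,q_2)$ turns the sum over $\Dg\cap\bn^2$ into the product of $D_\Dg$ with two independent geometric series, yielding the first summand. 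For a one-dimensional cone $L=\br_+(p,q)$, arcs satisfy $(k_1,k_2)=(pk,qk)$ for some $k\in\bn^*$ and we split by whether the initial form vanishes on the leading coefficients. In the generic case $c_2^p-\mu c_1^q\ne 0$ for every root $\mu$ of $F_S$, one has $\ord_\phi\Ic=Nk$; the open locus of admissible $(c_1,c_2)\in(\bc^*)^2$ has class $(\bl-1)^2-r(\bl-1)=(\bl-1)(\bl-r-1)$, and summing over $k$ produces the second summand.

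The substantial case is when $c_2^p=\mu c_1^q$ for some root $\mu$ of $F_S$; here one fixes such a $\mu$ and transfers the computation through the Newton map $\sg=\sg_{(p,q,\mu)}$. Using the two parametrization identities $[X_{n,(\nu-1)pk}]=[\overline X_{n,(\nu-1)pk}]\bl^{-pk-qk}$ and $[\overline X^\sg_{n,(p\nu+q-1)k}]=[X^\sg_{n,(p\nu+q-1)k}]\bl^k$, together with the isomorphism $S$ of the preceding Lemma identifying $\overline X^\sg_{n,(p\nu+q-1)k}$ with $\overline X_{n,(\nu-1)pk}$, the $\bl$-exponents collapse exactly to the weight $\bl^{-(p\nu+q-1)k}$ attached to the Newton transformed arcs $\psi$. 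Summing over $n$ and $k$ then recognises the result as $\zg(\Ic_\sg,\og_1)(T)$ for $\og_1=x_1^{p\nu+q-1}dx_1\wedge dy_1$, and summing over $L$ and over the roots $\mu$ of the face polynomials yields the third summand.

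The main obstacle is the bookkeeping in this last step: one must verify that the Jacobian of $\sg_{(p,q,\mu)}$ shifts the weight of the form $\og=x^{\nu-1}dx\wedge dy$ precisely to $\og_1=x_1^{p\nu+q-1}dx_1\wedge dy_1$ and that the normalizing factors $\bl^{-pk-qk}$ (from extracting $t^{pk},t^{qk}$ out of $\phi$) and $\bl^k$ (from extracting $t^k$ out of $\psi_1$) combine with the weights $\bl^{-(\nu-1)pk}$ and $\bl^{-(p\nu+q-1)k}$ exactly so that each arc is counted with the same motivic weight on both sides. Once this bookkeeping is in place, and once one notes that the generic-coefficient contributions in the two-dimensional and first one-dimensional cases are disjoint (corresponding to arcs whose strict transform under the relevant toric chart has order zero at the exceptional divisor), the three types of contributions add to $\zg(\Ic,\og)(T)$, proving the formula.
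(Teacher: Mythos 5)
Your proposal is correct and follows essentially the same route as the paper: decompose arcs by the cone of the dual fan containing $(\ord\phi_1,\ord\phi_2)$, compute the two-dimensional cones and the generic part of the one-dimensional cones directly via geometric series, and transfer the non-generic part through the Newton map $\sg_{(p,q,\mu)}$ using the jet-variety identities $[X_{n,(\nu-1)pk}]=[\overline X_{n,(\nu-1)pk}]\bl^{-(p+q)k}$, $[\overline X^{\sg}_{n,(p\nu+q-1)k}]=[X^{\sg}_{n,(p\nu+q-1)k}]\bl^{k}$ and the isomorphism lemma, so that the contribution is recognized as $\zg(\Ic_{\sg},\og_1)(T)$ with $\og_1=x_1^{p\nu+q-1}dx_1\wedge dy_1$. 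The bookkeeping you flag as the remaining obstacle is precisely the chain of equalities carried out in the paper, so no new idea is missing.
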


\medskip
\noindent {\bf Example 1 {\rm (continued)}.} The first Newton
polygon gives rise in the dual plane to a decomposition of $\br_+^2$
in three ($2$-dimensional) cones and two lines. The cones are
$\Delta_0$ generated by $(1,0)$ and $(2,1)$, $\Delta_1$ generated by
$(2,1)$ and $(1,1)$, and $\Delta_2$ generated by $(1,1)$ and
$(0,1)$. The lines are $L_1$ generated by $(2,1)$, and $L_2$
generated by $(1,1)$.

 \begin{figure}[ht]
 \begin{center}
\includegraphics{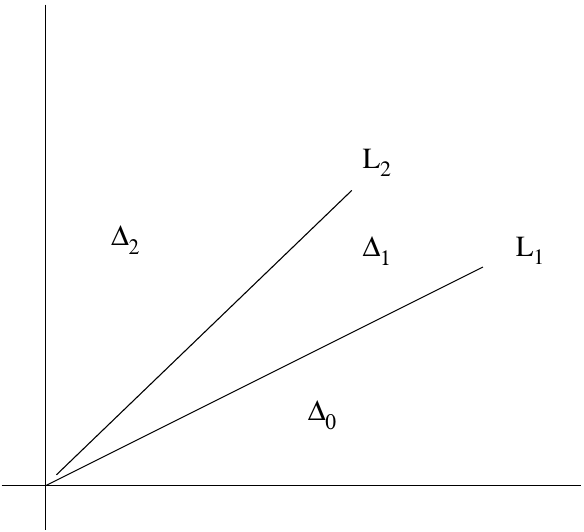}
\caption{}
 \end{center}
 \end{figure}

The contribution of the cones to the zeta function for $\Ic$ and $\omega=x^{\nu-1}dx\wedge dy$ is
$$
\aligned
(\mathbb{L}-1)^2\Big(\frac{\mathbb{L}^{-(3\nu+1)}T^5}{(1-\mathbb{L}^{-(2\nu+1)}T^5)(1-\mathbb{L}^{-\nu})}
&+
\frac{\mathbb{L}^{-(3\nu+2)}T^9}{(1-\mathbb{L}^{-(2\nu+1)}T^5)(1-\mathbb{L}^{-(\nu+1)}T^4)}
\\ &+
\frac{\mathbb{L}^{-(\nu+2)}T^4}{(1-\mathbb{L}^{-(\nu+1)}T^4)(1-\mathbb{L}^{-1})}\Big).
\endaligned$$
 The contribution of the lines is
 $$(\mathbb{L}-1)(\mathbb{L}-2)\Big(\frac{\mathbb{L}^{-(2\nu+1)}T^5}{(1-\mathbb{L}^{-(2\nu+1)}T^5)}+
 \frac{\mathbb{L}^{-(\nu+1)}T^4}{(1-\mathbb{L}^{-(\nu+1}T^4)}\Big).$$
We have considered the Newton map $\sg_{(2,1,3)}$ given by
  $$x=3x_1^2, \qquad\qquad y=x_1(y_1+3),$$
with image ideal $\Ic_1=(x_1^5y_1)$. We have $\omega_1=x_1^{2\nu}dx_1\wedge dy_1 $.

The contribution of the ideal $\Ic_1$ is
$$(\mathbb{L}-1)^2\frac{\mathbb{L}^{-(2\nu+1)}T^5}{(1-\mathbb{L}^{-(2\nu+1)}T^5)}\frac{\mathbb{L}^{-1}T}{(1-\mathbb{L}^{-1}T)}.$$
Next we consider the Newton map $\sg_{(p,q,\mu)}=\sg_{(1,1,-1)}$ associated to $S_2$ and $\mu=-1$. It is given by the substitution
$$x=x_1, \qquad\qquad y=x_1(y_1-1),$$
with image ideal given by
$$
\Ic_1 = x_1^4(x_1^2y_1,y_1^3+x_1^5).
$$
Its Newton polygon is given in Figure 2.  We have $\omega _1=x_1^{\nu}dx_1\wedge dy_1$.

We have again three ($2$-dimensional) cones in the dual space:
$\Delta'_0$ generated by $(1,0)$ and $(1,1)$, $\Delta'_1$ generated
by $(1,1)$ and $(1,3)$, and $\Delta'_2$ generated by $(1,3)$ and
$(0,1)$. The contribution of theses cones to the motivic zeta
function is
$$\aligned
(\mathbb{L}-1)^2\Big(\frac{\mathbb{L}^{-(2\nu +3)}T^{11}}{(1-\mathbb{L}^{-(\nu +2)}T^7)(1-\mathbb{L}^{-(\nu +1)}T^4)}&+\frac{(\mathbb{L}^{-(\nu +3)}T^8+\mathbb{L}^{-(2\nu+6)}T^{16}}{(1-\mathbb{L}^{-(\nu +2)}T^7)(1-\mathbb{L}^{-(\nu +4)}T^9)}\\
&+\frac{\mathbb{L}^{-(\nu +5)}T^9}{(1-\mathbb{L}^{-(\nu
+4)}T^9)(1-\mathbb{L}^{-1})}\Big).
\endaligned$$

The two faces are dicritical faces. The contribution of the two lines is
$$(\mathbb{L}-1)^2\Big(\frac{\mathbb{L}^{-(\nu+2)}T^7}{(1-\mathbb{L}^{-(\nu+2)}T^7)}+
 \frac{\mathbb{L}^{-(\nu+4)}T^9}{(1-\mathbb{L}^{-(\nu+4)}T^9)}\Big).$$

 The final calculation gives
$$\zeta(\Ic,\omega)(T)=\frac{P(\mathbb{L}^{-1},T)}{(1-\mathbb{L}^{-(\nu+4)}T^9))(1-\mathbb{L}^{-(\nu+2)}T^7)(1-\mathbb{L}^{-1}T)},$$
 where $P(\mathbb{L}^{-1},T)$ is a polynomial.

\bigskip
We observe that all faces of the successive Newton polygons
contribute to the computation of the motivic zeta function. A face
with equation $p\ag+q\bg =N$ gives rise to a factor in the
denominators of the contributions of the form $(1-\mathbb{L}^{-(p
\nu +q)}T^N)$, where the differential form at that stage of the
Newton algorithm has the form $\omega=x^{\nu-1}dx\wedge dy$.
However, at the end of the computation not all of these factors
remain. We prove a general result concerning this phenomenon.

\begin{prop}\label{no-contribution}
 Take $\Ic$ and $\omega$ as in Theorem \ref{formulazetafunction}.
Let $S$ be a face of the Newton polygon of $\Ic$ with equation
$\ag+q\bg=N$ that hits the $x$-axis, which is not a dicritical face
and such that its face polynomial has exactly one root. Then
$(1-\mathbb{L}^{-(\nu +q)}T^N)$ does not appear in the denominator
of the motivic zeta function of $\Ic$ and $\omega$. The analogous
result holds for a face $p\ag +\bg=N$ that hits the $y$-axis, with
the same hypothesis.
\end{prop}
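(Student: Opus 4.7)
The plan is to enumerate all contributions to $\zg(\Ic,\og)$ from the formula of Theorem \ref{formulazetafunction} whose denominator contains the candidate factor $1-V$, where $V:=\bl^{-(\nu+q)}T^N$, and to verify that their combined numerator is divisible by $(1-V)$, so that this factor cancels. Write $v_m=(N,0)$ and $v_{m-1}=(N-q\bar h,\bar h)$ with $\bar h=h(S)$; non-dicriticality together with $F_S$ monic in $y$ and having a unique root $\mu$ forces $F_S(u,v)=c(v-\mu u)^{\bar h}$. Let $\sigma:=\sigma_{(1,q,\mu)}$.

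Inspection of Theorem \ref{formulazetafunction} identifies four sources of $(1-V)$-contributions: (i) the cone $\Delta_m$ spanned by $(1,q)$ and the horizontal boundary ray $(0,1)$, present because $S$ hits the $x$-axis, whose contribution simplifies to $(\bl-1)V/(1-V)$ using $\pc_{\Delta_m}=\{(1,q+1)\}$; (ii) the cone $\Delta_{m-1}$ spanned by $(p_{m-1},q_{m-1})$ and $(1,q)$ (or by $(1,0)$ and $(1,q)$ if $m=1$); (iii) the non-root part of the line $L_m$, equal to $(\bl-1)(\bl-2)V/(1-V)$ since the unique root of $F_S$ gives $r=1$; and (iv) the recursive contribution $\zg(\Ic_\sigma,\og_1)$ with $\nu_1=\nu+q$.

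For (iv), the key computation is that $\sigma$ sends the initial part $x^{N-q\bar h}(y-\mu x^q)^{\bar h}$ of $\Ic$ on $S$ to the monomial $cx_1^Ny_1^{\bar h}$. Combined with Lemma \ref{Newton-algbis}(3), this forces $\Ic_\sigma=(x_1^N)\Ic_\sigma'$ with $\Ic_\sigma'\not\subseteq(x_1)$, so the leftmost vertex of $\nc(\Ic_\sigma)$ sits on $\alpha=N$. Consequently $(1-V)$ coincides with $(1-\bl^{-\nu_1}T^N)$, the factor attached to the boundary ray $(1,0)$ of the dual of $\nc(\Ic_\sigma)$, and in $\zg(\Ic_\sigma,\og_1)$ it can therefore appear only in the contribution of the cone $\Delta_0^{(\sigma)}$ adjacent to $(1,0)$ (or in the corresponding factor of the monomial zeta formula, should $\Ic_\sigma$ be a monomial). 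No deeper recursion produces $(1-V)$, because each further Newton map strictly increases the $\nu$-parameter.

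The proof concludes by showing that the sum of residues at $V=1$ of (i)--(iv) vanishes. Contributions (i) and (iii) give constant residues $\bl-1$ and $(\bl-1)(\bl-2)$, summing to $(\bl-1)^2$. The main technical obstacle is to verify that the residues of (ii) and of the $\Delta_0^{(\sigma)}$-part of (iv) sum to $-(\bl-1)^2$: this follows from a direct comparison of the lattice-point sums $\pc_{\Delta_{m-1}}$ and $\pc_{\Delta_0^{(\sigma)}}$, using the parameter transformation $(\nu,\og)\mapsto(\nu_1,\og_1)$ induced by $\sigma$ and the relation $T^N=\bl^{\nu+q}$ defining the hypersurface $V=1$. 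The analogous statement for a face $p\ag+\bg=N$ hitting the $y$-axis follows by the symmetric argument.
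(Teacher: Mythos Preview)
Your outline correctly identifies the four contributions carrying the factor $(1-V)$, and the reduction of (iv) to the single cone $\Delta_0^{(\sigma)}$ via the observation that every later $\nu$-parameter strictly exceeds $\nu+q$ is right. The computations for (i) and (iii) are also fine. The gap is at the decisive step: you assert that the residues of (ii) and (iv) sum to $-(\bl-1)^2$ ``by direct comparison of the lattice-point sums $\pc_{\Delta_{m-1}}$ and $\pc_{\Delta_0^{(\sigma)}}$'', but no such comparison is carried out. These two sets have different cardinalities (namely $qp_{m-1}-q_{m-1}$ and $q_1'$) and live in different Newton diagrams, so there is no obvious bijection between them; the cancellation you need is not a matter of matching points but of summing two genuinely different rational expressions. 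Since this cancellation \emph{is} the content of the proposition, the proposal as written is a strategy rather than a proof.

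The paper's argument is different in spirit and avoids residues entirely. One introduces the substitution $x_1=\bl^{-\nu}T^k$, $x_2=\bl^{-1}T^n$ (so that $V=x_1x_2^q$) and observes that, after this substitution, each of the four contributions becomes $(\bl-1)^2$ times the lattice-point generating function of a cone in the $(x_1,x_2)$-plane: the sum (i)$+$(iii) gives the ray $\bz_{>0}(1,q)$, contribution (ii) gives the open cone spanned by $(1,q)$ and $(p_{m-1},q_{m-1})$, and contribution (iv) gives---via the shear $(i,j)\mapsto(i,j+qi)$ coming from the Newton map---the open cone spanned by $(1,q)$ and $(p_1',q_1'+qp_1')$. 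These three regions partition the single open cone spanned by $(p_{m-1},q_{m-1})$ and $(p_1',q_1'+qp_1')$, whose generating function has denominator $(1-x_1^{p_{m-1}}x_2^{q_{m-1}})(1-x_1^{p_1'}x_2^{q_1'+qp_1'})$ and no factor $(1-x_1x_2^q)$. This cone-gluing is the missing mechanism behind your asserted residue identity; if you prefer to stay with residues, you would still need a lemma to the effect that the lattice points of $\pc_{\Delta_{m-1}}$ project bijectively under $(i,j)\mapsto j-qi$ onto $\{-1,\dots,-(qp_{m-1}-q_{m-1})\}$, and similarly for $\pc_{\Delta_0^{(\sigma)}}$, after which the two residues become $-1/(1-x_2)$ and $x_2/(1-x_2)$ respectively.
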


   \begin{figure}[ht]
 \begin{center}
\includegraphics{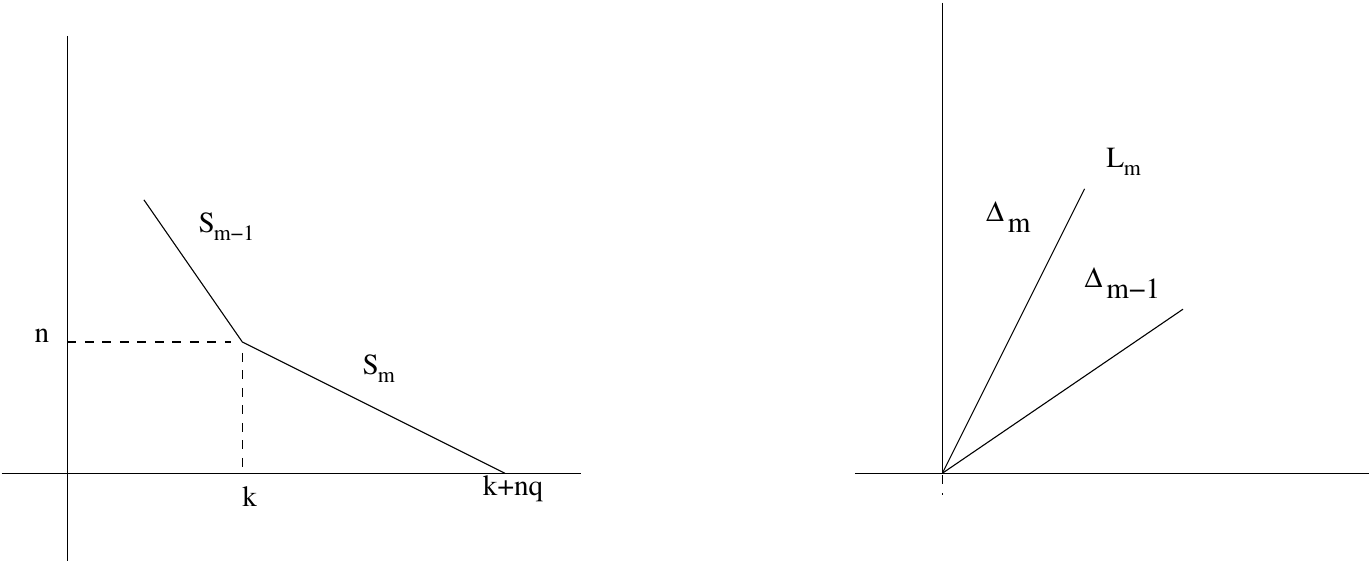}
 \caption{}
 \end{center}
\end{figure}

\begin{proof}
We assume the face $S=S_m$ has equation $\ag+q\bg=N$. Its
extremities have coordinates of the form $(k,n)$ and $(k+nq,0)$ with
$N=k+nq$. Writing the equation of the face $S_{m-1}$ as
$p_{m-1}\ag+q_{m-1}\bg=N_{m-1}$, we have
$N_{m-1}=p_{m-1}k+q_{m-1}n$. If $k=0$, the face $S_{m-1}$ has
equation $\alpha=0$.

There are four contributions of the face $S_m$ to the zeta function.
The contribution of $\Delta_m$ is
$$(\mathbb{L}-1)^2\frac{\mathbb{L}^{-(1+\nu+q)}T^N}{(1-\mathbb{L}^{-1})(1-\mathbb{L}^{-(\nu+q)}T^N)}$$
and the contribution of $L_m$ is
$$(\mathbb{L}-1)(\mathbb{L}-2)\frac{\mathbb{L}^{-(\nu+q)}T^N}{(1-\mathbb{L}^{-(\nu+q)}T^N)}.$$
The sum of these two contributions is
$$\aligned
\zeta _1(\Ic,\omega)(T)&=(\mathbb{L}-1)^2\frac{\mathbb{L}^{-(\nu+q)}T^N}{(1-\mathbb{L}^{-(\nu+q)}T^N)}\\
&=(\mathbb{L}-1)^2\Phi _1(x_1,x_2)\vert_{x_1=\mathbb{L}^{-\nu}T^k,x_2=\mathbb{L}^{-1}T^n},
\endaligned$$
where
 $$\Phi_1(x_1,x_2)=\frac{x_1x_2^q}{1-x_1x_2^q}.$$
The contribution of the cone $\Delta_{m-1}$  is
$$\aligned
\zeta _2(\Ic,\omega)(T)&=(\mathbb{L}-1)^2\frac{D_{\Delta_{m-1}}}{(1-\mathbb{L}^{-(\nu+q)}T^N)(1-\mathbb{L}^{-(p_{m-1}\nu+q_{m-1})}T^{N_{m-1}})}\\
&=(\mathbb{L}-1)^2\Phi _2(x_1,x_2)\vert_{x_1=\mathbb{L}^{-\nu}T^k,x_2=\mathbb{L}^{-1}T^n},
\endaligned$$
where $$\Phi_2(x_1,x_2)=\frac{D(x_1,x_2)}{(1-x_1x_2^q)(1-x_1^{p_{m-1}}x_2^{q_{m-1}})}$$
and
$$D(x_1,x_2)=\sum_{(i,j)\in \mathcal{P}_{\Delta_{m-1}}} x_1^i x_2^j.$$

The remaining part arises after applying the Newton map $\sigma_{(1,q,\mu)}:$
$$x=x_1,\quad\quad y=x_1^q(y_1+\mu),$$
where $\mu$ is the root of $F_{S_m}$.
We have $\omega_1=x_1^{\nu+q-1}dx_1\wedge dy_1$.

   \begin{figure}[ht]
 \begin{center}
\includegraphics{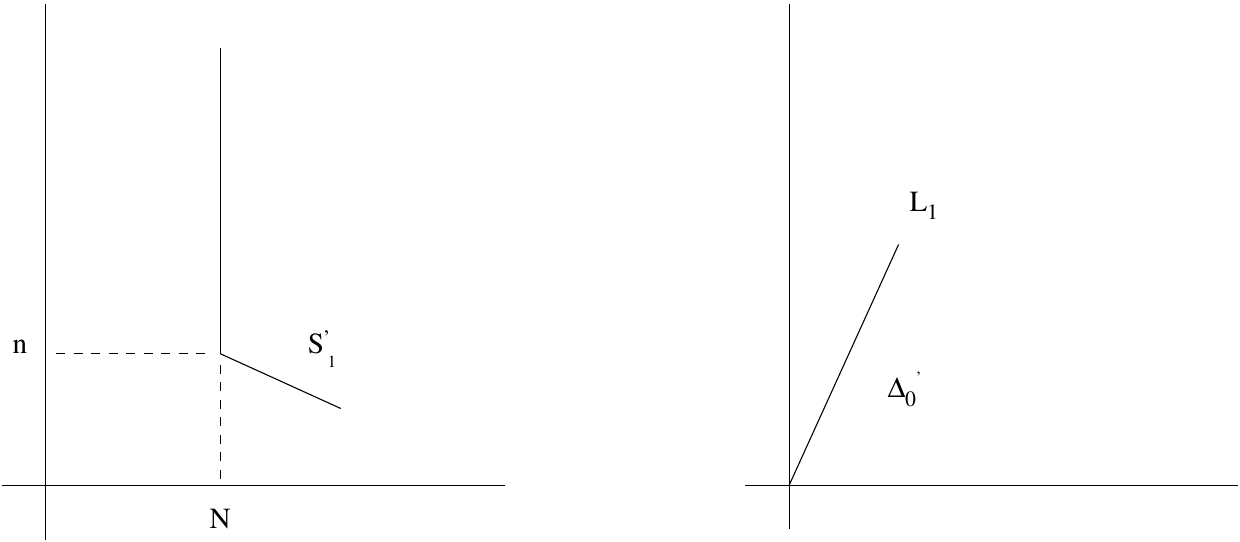}
 \caption{}
 \end{center}
\end{figure}

\noindent
 The contribution to the zeta function comes from the cone
$\Delta_0'$ for the new Newton polygon. Writing the equation of the
new face $S_1'$ as $p_1'\alpha +q_1' \beta =N_1'$, we have
$N_1'=p_1'N+q_1'n$. Then this contribution is
$$
\zeta _3(\Ic,\omega)(T)=
(\mathbb{L}-1)^2 \frac{D_{\Delta_0'}}{(1-\mathbb{L}^{-(\nu+q)}T^N)(1-\mathbb{L}^{-(p'_1(\nu+q)+q_1')}T^{N_1'})},$$
where
$$D_{\Delta_0'}=\sum_{(i,j)\in \mathcal{P}_{\Delta_0'} }(\mathbb{L}^{-(\nu+q)}T^N)^i (\mathbb{L}^{-1}T^n)^j.$$
We can write
$$(\mathbb{L}^{-(\nu+q)}T^N)^i (\mathbb{L}^{-1}T^n)^j=(\mathbb{L}^{-\nu}T^k)^i(\mathbb{L}^{-1}T^n)^{j+qi}$$
and similarly
$$1-\mathbb{L}^{-(\nu+q)}T^N=1-\mathbb{L}^{-\nu}T^k(\mathbb{L}^{-1}T^n)^q,$$
$$1-\mathbb{L}^{-((\nu+q)p_1'+q_1')}T^{N_1'}=1-(\mathbb{L}^{-\nu}T^k)^{p_1'}(\mathbb{L}^{-1}T^n)^{q_1'+p_1'q}.$$
Then
$$\zeta _3(\Ic,\omega)(T)=(\mathbb{L}-1)^2 \Phi_3(x_1,x_2)\vert_{x_1=\mathbb{L}^{-\nu}T^k,x_2=\mathbb{L}^{-1}T^n},$$
where
$$\Phi_3(x_1,x_2)=\frac{D_0(x_1,x_2)}{(1-x_1x_2^q)(1-x_1^{p_1'}x_2^{q_1'+p_1'q})}$$
and
$$D_0(x_1,x_2)=\sum_{(i,j)\in \mathcal{P}_{\Delta_0'} }(x_1)^i (x_2)^{j+iq}.$$
The rational function $\Phi_1(x_1,x_2)$ is the generating function of the cone generated by $(1,q)$,
the rational function $\Phi_2(x_1,x_2)$ is the generating function of the cone generated by $(1,q)$ and $(p_{m-1},q_{m-1})$,
and the rational function
$\Phi_3(x_1,x_2)$ is the generating function of the cone generated by $(1,q),(p_1',q_1'+qp_1')$.
The map $(i,j)\to (i'=i,j'=j+qi)$ being a linear automorphism of the plane, we see that the configuration is as in Figure 6.

   \begin{figure}[ht]
 \begin{center}
\includegraphics{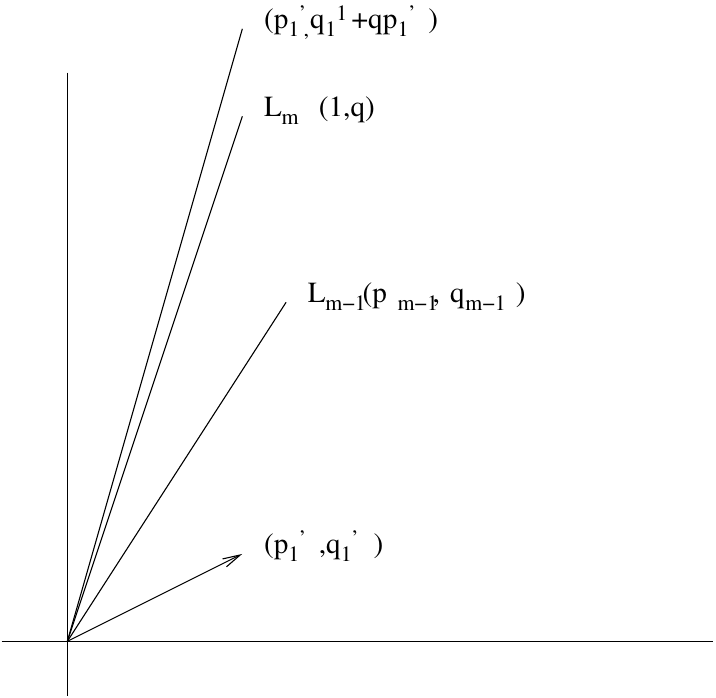}
 \caption{}
 \end{center}
\end{figure}

\noindent
 Hence $$\Phi_1(x_1,x_2)+\Phi_2(x_1,x_2)+\Phi_3(x_1,x_2)=\Phi_4(x_1,x_2),$$
where $\Phi_4(x_1,x_2)$ is the generating function of the cone
$\Delta$ generated by $(p_1',q_1'+qp_1')$ and $(p_{m-1},q_{m-1})$.

Then finally the sum of the four contributions is equal to
$$(\mathbb{L}-1)^2 \Phi_4(x_1,x_2)\vert_{x_1=\mathbb{L}^{-\nu}T^k,x_2=\mathbb{L}^{-1}T^n},$$
where
$$ \Phi_4(x_1,x_2)=\frac{\sum_{(i,j)\in \mathcal{P}_{\Delta}}x_1^ix_2^j}{(1-x_1^{p_1'}x_2^{q_1'+qp_1'})(1-x_1^{p_{m-1}}x_2^{q_m-1})}.$$
\end{proof}

\begin{remark}
We comment on two special cases above.

(1) When $S$ is the only face of the Newton polygon, the proof is
just easier.

(2) It is possible that one of the two factors in the denominator of
$\Phi_4$ is, after substitution, equal to
$1-\mathbb{L}^{-(\nu+q)}T^N$. In the statement of the proposition we
mean that the factor coming from $S$ cancels.

\end{remark}

\medskip
  \section{Newton tree associated with an ideal}

\medskip
In \cite{CV} we collected the information of the Newton algorithm of
an ideal in its Newton tree. It reflects the \lq tree shape\rq\ of
the algorithm, and keeps the information on the successive Newton
polygons. We recall briefly the main ideas.


 \smallskip
  \subsection{Graph associated with a Newton diagram.}

  A graph associated with a Newton diagram is a vertical linear graph with vertices, edges connecting vertices and two arrows at the top and the bottom.

  If the Newton polygon is empty, that is,
  $\Dg =(N,M)+\br_+^2$,
  the graph is in Figure 7. It has one edge connecting two arrows decorated by $N$ and $M$ at the top and the bottom, respectively.

   \begin{figure}[ht]
 \begin{center}
\includegraphics{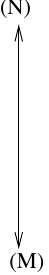}
 \caption{}
 \end{center}
\end{figure}

If the Newton polygon is $\cup_{1\leq i\leq m}S_i$, the graph has $m$ vertices $v_1,\cdots, v_m$ representing the faces $S_1, \cdots, S_m$. They are connected by edges when the faces intersect.
We add one edge at $v_1$ and at $v_m$ ended by an arrow.

  \begin{figure}[ht]
 \begin{center}
\includegraphics{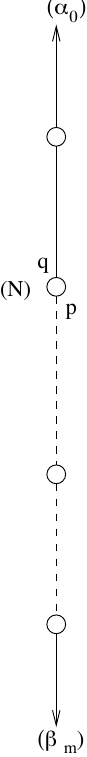}
 \caption{}
 \end{center}
\end{figure}

We decorate the vertices and the extremities of the edges near the
vertices using the following rule. Let $v$ be a vertex and $S$ be
the corresponding face whose supporting line has equation
$p\ag+q\bg=N$, where $(p,q)\in (\bn^*)^2$ and $\gcd (p,q)=1$.  We
decorate the vertex by $(N)$. Further we decorate the extremity of
the edge above the vertex with $q$ and the extremity of the edge
under the vertex by $p$; we say that the decorations near $v$ are
$(q,p)$. The arrows represent the  non-compact faces with supporting
lines $\{x=\ag_0\}$ and $\{y=\bg_m\}$; they are decorated with
$(\ag_0)$ at the top and $(\bg_m)$ at the bottom. See Figure 8.

 \subsection{Newton tree of an ideal}

 We build the Newton tree of $\Ic$ by induction on the depth.
 If its depth is zero, the ideal is generated by a \lq monomial\rq\ $x^N(y+h(x))^{M}$; we define its Newton tree to be the graph as in Figure 7.
Let now $\Ic$ be an ideal of depth $d(\Ic)$ greater than or equal to
one. We assume that we have constructed the Newton trees of ideals
of depths $d<d(\Ic)$.

On one hand we have the graph of the Newton polygon of the ideal $\Ic$. Consider a vertex $v$ on this
graph. It is associated with a face $S$ of the Newton polygon of $\Ic$ with equation
$p_S \ag+q_S\bg=N_S$ and
$$ \ti(\Ic,S)=x^{a_S}y^{b_S}F_{\Ic,S}(x^{q_S},y^{p_S})\big(k_1(x^{q_S},y^{p_S}),\cdots,k_s(x^{q_S},y^{p_S})\big)$$
with $\deg k_i=d_S\geq0$.  We decorate the vertex $v$ with the pair $(N_S,d_S)\in \bn ^2$.

Now we apply the Newton maps $\sigma=\sg_{(p_S,q_S,\mu_i)}$ for each
root $\mu_i$ of the face polynomial. (If the face is dicritical we
already know that the maps $\sg_{(p_S,q_S,\mu)}$ for $\mu$ generic
give a monomial ideal of the form $(x^{N_S})$ and we don't need to
perform those Newton maps.) The transformed ideal $\Ic _{\sg}$ has
depth less than $d(\Ic)$. Then from the induction hypothesis we can
construct the Newton tree of $\Ic _{\sg}$. It has a top arrow
decorated with $N_S$. We delete this arrow and glue the edge on the
vertex $v$. The edge which is glued on the vertex $v$ is a
horizontal edge. Horizontal edges join vertices corresponding to
different Newton polygons and vertical edges join vertices
corresponding to the same Newton polygon. Note that the \lq
width\rq\ of the Newton tree of $\Ic$ is precisely its depth
$d(\Ic)$.

We explain now how we decorate  the Newton tree. Let $v$ be a vertex
on the Newton tree of $\Ic$. If $v$ corresponds to a  face of the
Newton polygon of $\Ic$, we say that $v$ has {\it no preceding
vertex}  and we define $\cls (v)=\{v\}$.  If $v$ does not correspond
to a face of the Newton polygon of $\Ic$, it corresponds to a face
of the Newton polygon of some $\Ic _{\Sg}$. The Newton tree of $\Ic
_{\Sg}$ has been glued on a vertex $v_1$ which is called the {\it
preceding vertex} of $v$.  We note that the path between one vertex
and its preceding vertex contains exactly one horizontal edge but
may contain some vertical edges, for example as in Figure 9.

    \begin{figure}[ht]
 \begin{center}
\includegraphics{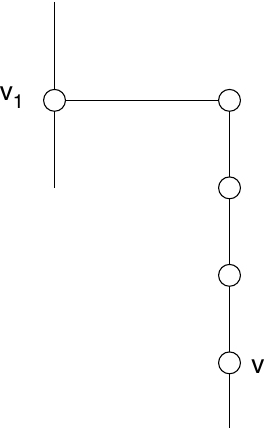}
 \caption{}
 \end{center}
\end{figure}

If $v_1$ does not correspond to  a face of the polygon of $\Ic$, we
can consider its preceding vertex, and so on. We define
$\cls(v)=\{v_i, \cdots, v_2,v_1,v\}$ where $v_j, 2\leq j\leq i,$ is
the preceding vertex of $v_{j-1}$, and $v_i$ corresponds to a face
of the Newton polygon of $\Ic$. The final Newton tree is decorated
in the following way.  Let $v$ be a vertex on the Newton tree of
$\Ic$. If $\cls(v)=\{v\}$, the decorations near $v$ are not changed.
If  $\cls(v)=\{v_i, \cdots, v_2,v_1,v\}$ and if the edge decorations near
$v$ on the Newton tree where $\cls(v)=\{v_{i-1}, \cdots,
v_2,v_1,v\}$ are $(m,p)$, then after the gluing on $v_i$ they become
$(m+p_iq_ip_{i-1}^2\cdots p_1^2p,p)$. The decorations of the vertices and of the arrows
are not changed. Usually we do not write the decoration of arrows decorated with $(1)$. We refer to \cite{CV} for motivations and
applications.

The vertices decorated with $(N,d)$ with $d>0$ (corresponding to
dicritical faces) are called {\it dicritical vertices}.

In $\S5$ we will use the notion of {\em edge determinant} of an edge $e$; this is the product of the edge decorations on $e$ minus the product of the adjacent edge decorations to $e$.

\bigskip
\noindent {\bf Example 1 {\rm (continued)}.} In Figure 10 we draw
the graphs associated with the occurring Newton diagrams, and the
resulting Newton tree.

  \begin{figure}[ht]
 \begin{center}
\includegraphics{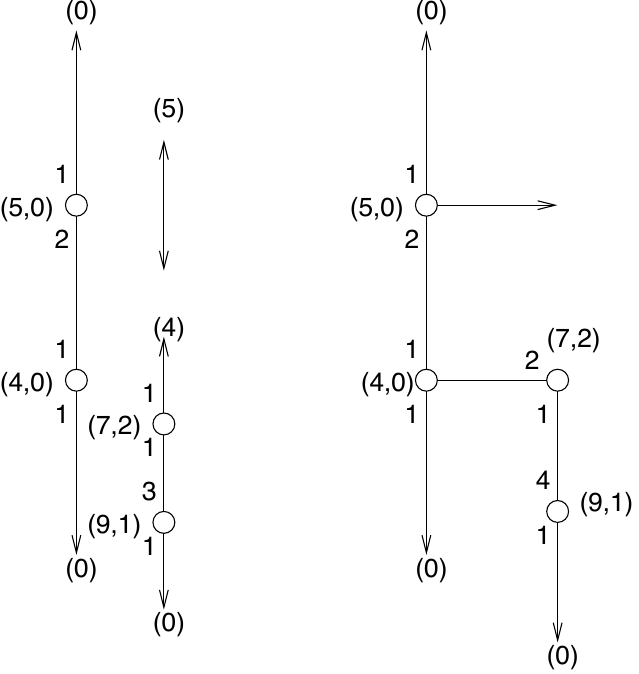}
\caption{}
 \end{center}
 \end{figure}

 \subsection{Minimal Newton trees}

The construction of the Newton trees shows that each vertex of the
Newton tree of an ideal $\Ic$ corresponds to a face of a Newton
polygon appearing in the Newton algorithm.

If the decorations near a vertex are $(q,p)$ and the decoration of
the vertex is $N$, then there is a possible contribution of the
vertex to the denominator of the motivic zeta function, namely
$(1-\mathbb{L}^{-(\nu p+q)}T^N)$. However, Proposition
\ref{no-contribution} shows that certain vertices do not contribute.
Those vertices correspond to faces of the Newton polygon which are
not dicritical faces, have equation $\ag+q\bg=N$ or $p\ag+\bg=N$,
hit respectively the $x$-axis or the $y$-axis, and have a face
polynomial with only one root. Such a vertex is the top vertex (on
the first vertical line) if its nearby decorations are $(1,p)$, it
is not dicritical and has valency $3$, and the top arrow is
decorated with $(0)$. It is a vertex at the bottom of the tree if
its nearby decorations are $(q,1)$, it is not dicritical and has
valency 3, and the bottom arrow is decorated with $(0)$. See Figure
11.
 Example 1 is an example with such vertices.

   \begin{figure}[ht]
 \begin{center}
\includegraphics{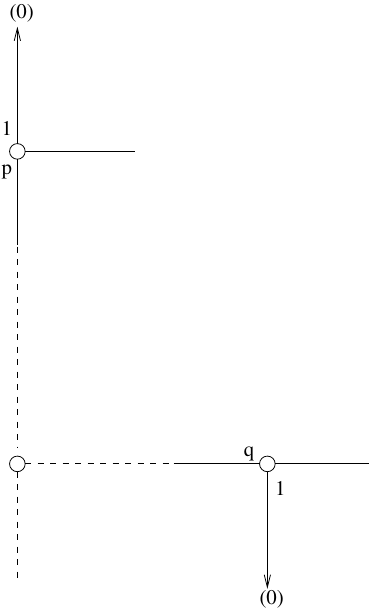}
\caption{}
 \end{center}
 \end{figure}

We will construct Newton trees where such vertices do not appear.
For that purpose we first introduce certain \lq manipulations\rq\ on
trees. These will turn out to correspond to coordinate changes. The
discussion will lead to a notion of minimal Newton tree, with a
geometric interpretation developed in $\S5$.

\smallskip
Take a vertex $v$ as above. Assume first that its nearby decorations
are $(q,1)$ such that either $q\neq 1$, or  $q=1$ and $v$ is not
connected to two arrows decorated with $(0)$. The edge $e$ decorated
with $q$ is connected to a vertex  or an arrow, denoted by $v_1$.
The edge $e_v$ decorated with $1$ is connected to an arrow decorated
with $(0)$ and the horizontal edge $e_h$ is connected to a vertex or
an arrow denoted by $v_2$. We delete  the vertex $v$ and the edge
$e_v$ with the  arrow at the other end. We connect $v_1$ and $v_2$
by an edge with the same orientation as $e$ and we decorate with the
decoration on $e$ near $v_1$ and with the decoration on $e_h$ near
$v_2$. We call this operation {\it erasing the vertex $v$}. See
Figure 12.

    \begin{figure}[ht]
 \begin{center}
\includegraphics{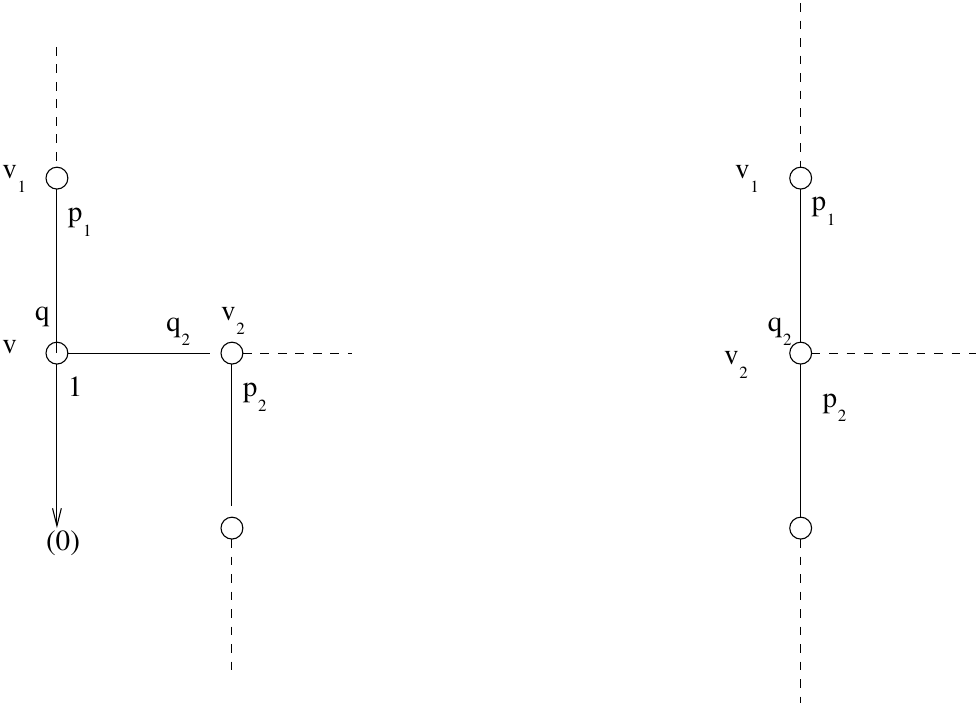}
\caption{}
 \end{center}
 \end{figure}

We act symmetrically for the first top vertex if it is decorated
with $(1,p)$ such that either $p\neq 1$, or  $p=1$ and $v$ is not
connected to two arrows decorated with $(0)$. Note that in this case
we change some decorations $(q_2,p_2)$ in $(p_2,q_2)$. See Figure
13.

   \begin{figure}[ht]
 \begin{center}
\includegraphics{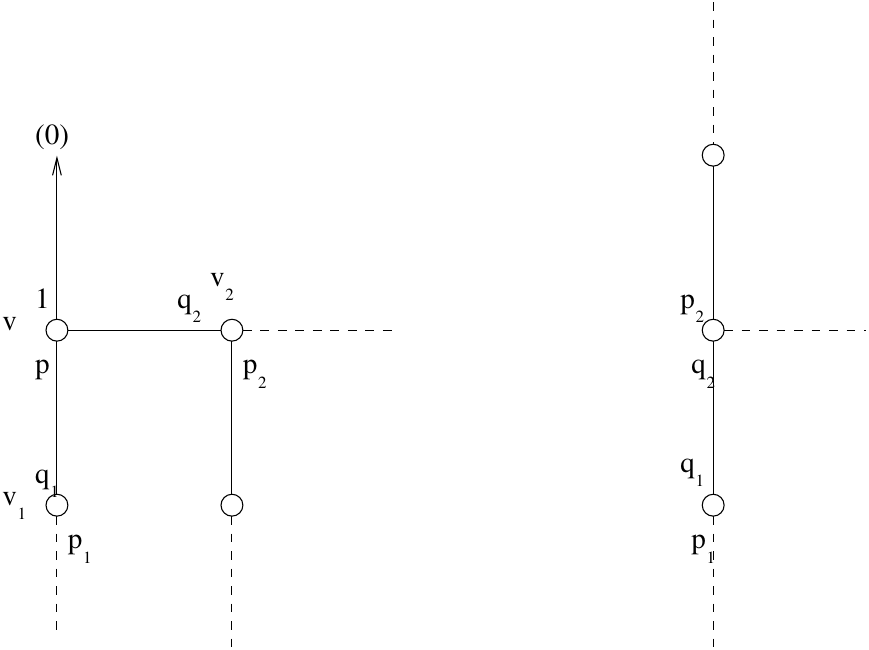}
\caption{}
 \end{center}
 \end{figure}

Let finally $v$ be decorated with $(1,1)$ and connected to two
arrows decorated with $(0)$.  The vertex $v$ is connected to a
vertex $v_2$ decorated with $(q_2,p_2)$ with $q_2>1$. We erase the
vertex $v$ as well as the edge with the bottom arrow. The top vertex
of the new Newton tree is $v_2$ decorated with $(q_2,p_2)$. See
Figure 14.

\smallskip
If above the vertex $v_2$ is a bottom vertex, with bottom arrow
decorated by $(0)$, if it has valence $3$ and is not dicritical, and
if $p_2=1$, then  we are in the previous case and we can also erase
$v_2$. We can continue this way.

    \begin{figure}[ht]
 \begin{center}
\includegraphics{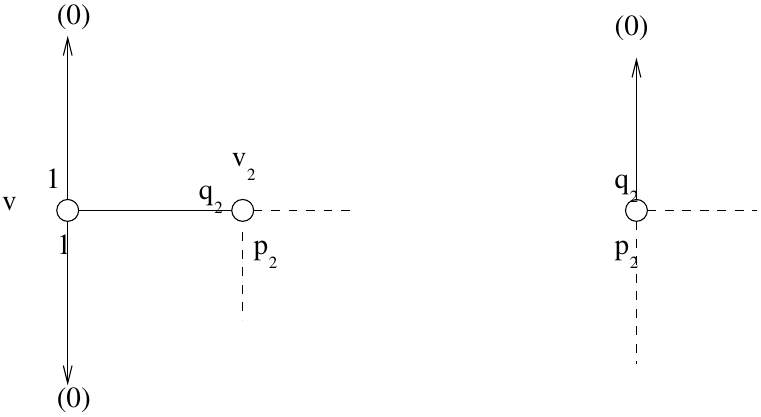}
\caption{}
 \end{center}
 \end{figure}

\begin{definition}
 A Newton tree is minimal if the only vertices connected to an arrow decorated with $(0)$ by an edge decorated with $1$ are dicritical vertices.
 \end{definition}

In order to obtain a minimal Newton tree, starting from any Newton
tree, erasing vertices (if applicable) is a good start. To finalize
the procedure we introduce a more general operation on
 Newton trees that
we call {\it exchange of vertical edge}.

 Let $\mathcal{T}$ be a
Newton tree. Assume we have a vertex $v$ decorated with $(q,1)$. Let
$e_1$ be the vertical edge arising from $v$ decorated with $p=1$ and
let $v_1$ be the vertex or the arrow at the other extremity of
$e_1$. Choose a horizontal edge arising from $v$, denote it by $e_2$
and  its other extremity by $v_2$. Let $\mathcal{T}'$ be the tree
obtained the following way. We cut $e_1$ and $e_2$ near the vertex
$v$. Then we obtain three subtrees: the part $\mathcal{T}_v$ of
$\mathcal{T}$ which contains $v$, the subtree $\mathcal{T}_1$ which
contains $e_1$ and the subtree $\mathcal{T}_2$ which contains $e_2$.
We stick $\mathcal{T}_1$ on $v$ by $e_1$ as a horizontal edge, not
changing the rest of $\mathcal{T}_1$, and we stick $\mathcal{T}_2$
on $v$ by $e_2$ as a vertical edge, not changing the rest of
$\mathcal{T}_2$. See Figure 15.

    \begin{figure}[ht]
 \begin{center}
\includegraphics{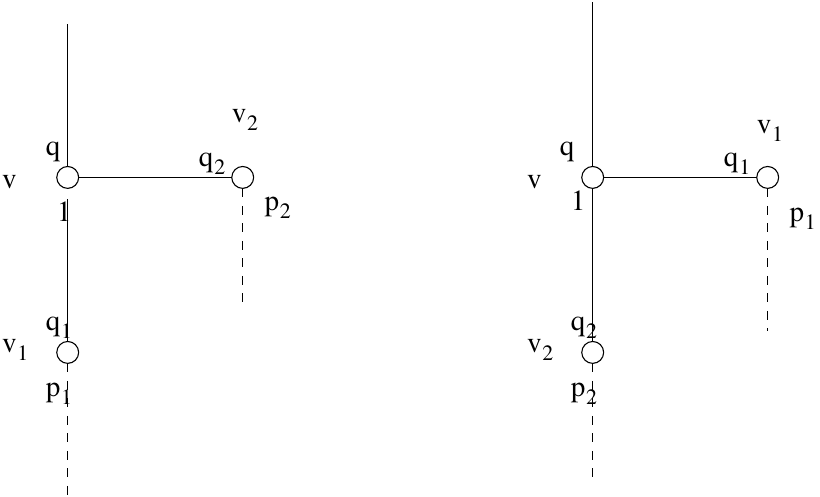}
\caption{}
 \end{center}
 \end{figure}

 \begin{remark}
Erasing a vertex $v$ is a particular case of exchanging vertical
edges. In fact, let $v$ be a vertex  decorated with $(q,1)$. We
assume that $v$ is not a dicritical vertex. Let $e_1$ be the
vertical edge decorated with $1$. We assume that $v_1$ is an arrow
decorated with $(0)$. Assume there is only one horizontal edge
$e_2$. Then we are in the case where we can erase $v$. We exchange
the edges $e_1$ and $e_2$. We can erase now the horizontal edge
because the arrow is decorated  with $(0)$ (In a Newton tree the horizontal edges are ending with vertices or with arrows decorated with strictly positive numbers.)
 Then we are
left with a vertical edge with a vertex of valency $2$ on it that we
can erase. Exchange of vertical edge can also be used to erase a
vertical edge decorated with $1$ and ending with an arrow decorated
with $(0)$, see Figure 16.
 \end{remark}

    \begin{figure}[ht]
 \begin{center}
\includegraphics{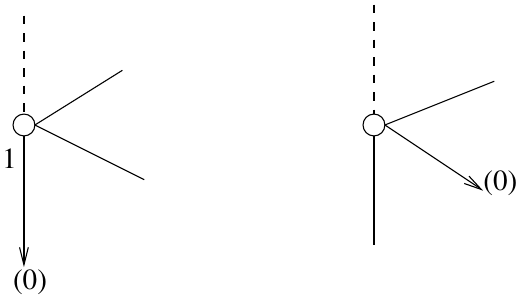}
\caption{}
 \end{center}
 \end{figure}

As long as there exists a non-dicritical vertex that is connected to
an arrow decorated with $(0)$ by an edge decorated with $1$, we can
clearly perform an exchange of vertical edge in order to erase it.
Hence we showed the following.

 \begin{prop}\label{towardsminimal}
By exchanging vertical edges one can obtain a minimal Newton tree,
starting from any Newton tree.
 \end{prop}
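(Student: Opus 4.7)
The plan is to prove this by induction on the number of non-dicritical vertices that are connected to an arrow decorated $(0)$ by an edge decorated $1$; call these the \emph{bad} vertices, so that $\mathcal{T}$ is minimal precisely when it has no bad vertex. I will show that whenever a bad vertex $v$ is present, a single exchange of vertical edge (together with the automatic simplifications it triggers) strictly decreases the total number of vertices of $\mathcal{T}$. Since the vertex count is a non-negative integer, the procedure must terminate, and at termination the tree is minimal by definition.

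First I would fix a bad vertex $v$ and, using the symmetry between top and bottom of the vertical line, reduce to the case where its nearby decorations are $(q,1)$; the degenerate case of decorations $(1,1)$ with both arrows decorated $(0)$ is handled directly as in Figure~14 by deleting $v$ together with the bottom arrow and moving upward along the vertical line. So let $e_1$ be the vertical edge at $v$ decorated $1$ ending in a $(0)$-arrow, and let $e_2$ be a horizontal edge at $v$ (which exists, since otherwise $v$ would be the single vertex of a depth-zero tree and there would be nothing to erase). Applying the exchange of vertical edge to $(e_1,e_2)$ at $v$, as in Figure~15, replants $e_2$ as the new vertical edge at $v$ and $e_1$ as a horizontal one; but $e_1$ now ends in an arrow decorated $(0)$, which by the convention on horizontal edges recalled in the remark just before the proposition forces its removal, leaving $v$ as a valency-$2$ vertex that can also be suppressed. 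The net effect is precisely the erasure of $v$ pictured in Figure~12, as the same remark makes explicit.

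The main potential obstacle is verifying that this erasure never creates a fresh bad vertex elsewhere in $\mathcal{T}$. This is settled by direct inspection of Figures~12--14: the exchange and subsequent simplifications alter only the single new edge joining the former neighbours $v_1$ and $v_2$ of $v$, and the decorations at \emph{their} endpoints of that edge are inherited verbatim from the original edges $e$ and $e_h$; in particular no arrow marking, vertex decoration or dicritical status anywhere in $\mathcal{T}$ changes except at $v$ itself, which is removed. Hence the set of bad vertices can only lose $v$ and never gains new elements, the vertex count strictly decreases, and iterating the procedure finitely many times yields a minimal Newton tree.
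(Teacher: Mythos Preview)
Your argument is correct and follows the paper's approach, which is simply the one-sentence observation (in the paragraph immediately preceding the proposition) that whenever a non-dicritical vertex is connected to a $(0)$-arrow by an edge decorated $1$ one can perform an exchange of vertical edge to erase it. Your added termination argument via the total vertex count is sound and in fact makes your final paragraph about no new bad vertices superfluous; note also that in the $(1,p)$ top case of Figure~13 some nearby decorations are swapped, so that last claim would require a bit more care than you give it, but since termination is already guaranteed by the strict drop in vertex count this does not affect the proof.
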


\noindent
 {\bf Example 1 {\rm (continued)}.}
A minimal Newton tree for the tree of Figure 10 is given in Figure
17.

    \begin{figure}[ht]
 \begin{center}
\includegraphics{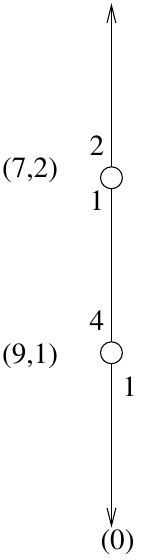}
\caption{}
 \end{center}
 \end{figure}

\begin{remark}
A minimal Newton tree of the generic curve of an ideal may not
coincide with a minimal Newton tree of the ideal. A dicritical
vertex of degree $1$ may be erased in the minimal Newton tree of the
generic curve but is not to be erased in the minimal Newton tree of
the ideal.
 \end{remark}

Now we show that minimal Newton trees can be obtained as Newton
trees in some coordinates.

 \begin{prop}\label{exchange}
Let $\Ic \subset \bc[[x,y]]$ be a non-trivial ideal with Newton tree
$\mathcal{T}$. Assume there is on the first vertical line of
$\mathcal{T}$ a vertex $v$ decorated with $(q,1)$. Choose a
horizontal edge to make an exchange of vertical edge. Let
$\mathcal{T}'$ be the resulting tree. Then there exists a change of
coordinates  $\phi:(x,y)\mapsto (x',y')$ such that $\mathcal{T}'$ is
the Newton tree of the ideal $\Ic'=\phi(\Ic)\subset \bc[[x',y']]$.

A similar result holds for a vertex on the first vertical line
decorated with $(1,p)$.
 \end{prop}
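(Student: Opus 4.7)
The plan is to write down the coordinate change explicitly, and then check, face by face and subtree by subtree, that the Newton tree of the transformed ideal coincides with $\mathcal{T}'$. Let $S$ be the face of $\nc(\Ic)$ corresponding to $v$; by assumption its supporting line is $\alpha + q\beta = N$, and write the face polynomial as $F_{\Ic,S}(X,Y) = c\prod_i (Y-\mu_i X)^{\nu_i}$. Let $\mu\in \bc^*$ be the root of $F_{\Ic,S}$ labelling the chosen horizontal edge $e_2$, and set
\[
\phi:\ x' = x,\qquad y' = y - \mu\, x^q,\qquad \Ic' = \phi(\Ic).
\]

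The substitution $y \mapsto y' + \mu (x')^q$ is homogeneous of degree $q$ for the weighted grading $\deg x = 1,\ \deg y = q$, so it respects the decomposition of each generator into initial parts on the parallel lines $\alpha + q\beta = \mathrm{const}$. A direct computation on $\mathrm{in}(\Ic, S)$ yields
\[
x^{a_l}(y')^{\nu_\mu}(y' + \mu (x')^q)^{b_l}\!\!\prod_{\mu_i\ne\mu}(y' - (\mu_i-\mu)(x')^q)^{\nu_i}.
\]
Thus the line $\alpha + q\beta = N$ supports a face $S'$ of $\nc(\Ic')$ with unchanged decorations $(q,1)$ and unchanged $N$: the absorbed multiplicity $\nu_\mu$ becomes the new $y$-power $b'_l$, and the new face polynomial has roots $\mu_i-\mu$ for the nonabsorbed $\mu_i$, together with $-\mu$ of multiplicity $b_l$. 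I would also check that the spread $(a,b)\mapsto (a+jq, b-j)$ induced by $\phi$ keeps the whole new support in $\{\alpha + q\beta \ge N\}$, and preserves intact every face of $\nc(\Ic)$ above $S$, whose slope $-p'/q'$ satisfies $p'q > q'$.

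Next I would identify the subtrees attached to the new vertex. For each $\mu_i\ne\mu$, the composition identity
\[
\sigma_{(1,q,\mu_i-\mu)}\circ\phi = \sigma_{(1,q,\mu_i)}
\]
shows that the horizontal subtree at $v$ for the root $\mu_i - \mu$ of $F_{\Ic',S'}$ reproduces the horizontal subtree at $v$ for $\mu_i$ in $\mathcal{T}$. For the new root $-\mu$, the identity
\[
\sigma_{(1,q,-\mu)}\circ\phi = \bigl((x,y)\mapsto (x_1,\, x_1^q y_1)\bigr)
\]
realizes $\sigma_{(1,q,-\mu)}(\Ic')$ as the image of $\Ic$ under the monomial substitution $y = x_1^q y_1$: its Newton polygon has a vertical non-compact face at $\alpha = N$ and compact faces in natural bijection with the faces of $\nc(\Ic)$ strictly below $S$, so after gluing the resulting horizontal subtree at the new $v$ is exactly $\mathcal{T}_1$. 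The same substitution read in the opposite direction sends $\Ic'$ to $\sigma_{(1,q,\mu)}(\Ic)$, whence the part of $\nc(\Ic')$ lying strictly below $S'$ is the $(a,b)\mapsto (a-qb,\, b)$ image of $\nc(\sigma_{(1,q,\mu)}(\Ic))$, and the vertical extension of the new Newton tree below $v$ realizes $\mathcal{T}_2$.

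The main obstacle will be bookkeeping of the edge decorations. Once the structural identification above is in place, I would verify that the raw decorations of the vertices of $\mathcal{T}_2$, reappearing now on the first vertical line of the Newton tree of $\Ic'$, transform to match the exchanged decorations of $\mathcal{T}'$ under the rule $(m,p)\mapsto (m + p_i q_i p_{i-1}^2\cdots p_1^2 p,\, p)$ recalled in the construction of the Newton tree above; the linear change $(a,b)\mapsto (a-qb,\, b)$ between the two Newton polygons makes this compatibility explicit. The case of a vertex decorated $(1,p)$ on the first vertical line is symmetric, treated via the coordinate change $x' = x - \mu\, y^p$, $y' = y$.
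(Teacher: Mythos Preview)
Your approach is essentially the paper's own: the same coordinate change $x'=x,\ y'=y-\mu x^{q}$, the same verification that the faces above $S$ are unchanged, the same recomputation of $\ti(\Ic',S')$, and the same two composition identities $\sigma_{(1,q,\mu_i-\mu)}\circ\phi=\sigma_{(1,q,\mu_i)}$ and $\sigma_{(1,q,-\mu)}\circ\phi=(x_1,x_1^q y_1)$ to swap the subtrees $\mathcal{T}_1$ and $\mathcal{T}_2$.

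There is one case you have not covered. You assume the chosen horizontal edge $e_2$ is labelled by a root $\mu$ of the face polynomial, but $e_2$ may end at an \emph{arrow} rather than at a vertex. This happens precisely when $\Ic=(y+h(x))^{\nu}\Ic_1$ with $h\in x\bc[[x]]$, $h\neq 0$; then the polynomial change $y'=y-\mu x^q$ only kills the leading term of $h$ and will not move the arrow-subtree onto the first vertical line. The paper handles this case separately with the full series change $x'=x,\ y'=y+h(x)$. Apart from this omission your argument is correct and matches the paper; your extra paragraph on the edge-decoration bookkeeping is more explicit than what the paper writes, but it is consistent with the construction.
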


 \begin{proof}
Let $S$ be the face of the Newton polygon corresponding to $v$.
Write
$$
 \ti(\Ic,S)=x^{a_S}y^{b_S}F_{\Ic,S}(x^{q_S},y^{})\big(k_1(x^{q_S},y^{}),\cdots,k_s(x^{q_S},y^{})\big)
$$
 with
 $$F_{\Ic,S}(x^{q_S},y)=\prod_{1\leq i\leq n}(y-\mu_ix^{q_S})^{\nu_i}$$
 as in (\ref{initialideal1}) or (\ref{initialideal2}).

Consider first the case where the horizontal edge we have chosen
does not end with an arrow. Let $\mu$ be the root (with multiplicity
$\nu$) corresponding to the edge we have chosen. We perform the
change of variables
 $$x'=x,\ \ \ \ y'=y-\mu x^{q_S}.$$
The faces above $S$ do not change. The face $S$ transforms into a
face $S'$ with the same $(q_S,1)$. Its associated initial ideal is
$$
 \ti(\Ic,S')=(x')^{a_S}(y')^{\nu}F_{\Ic,S'}((x')^{q_S},y')\big(k'_1((x')^{q_S},y'),\cdots,k'_s((x')^{q_S},y')\big)
$$
with
$$F_{\Ic,S'}\big((x')^{q_S},y'\big)=\big(y'+\mu (x')^{q_S}\big)^{b_S}\prod_{1\leq i\leq n,\mu_i \neq \mu}\big(y'-(\mu_i-\mu)(x')^{q_S}\big)^{\nu_i}.$$
For $\mu_i \neq \mu$ we perform in the $(x,y)$-coordinates the
Newton map $\sigma_{(1,q_S,\mu_i)}:$
 $$x=x_1, \qquad  y=x_1^{q_S}(y_1+\mu_i),$$
and in the $(x',y')$-coordinates the map $\sigma_{(1,q_S,\mu_i -
\mu)}:$
 $$x'=x'_1, \qquad y'=(x'_1)^{q_S}(y'_1+\mu_i-\mu).$$
A simple computation yields $x_1=x'_1$ and $y_1=y'_1$.

Now we replace the Newton map $\sg_{(1,q_S,\mu)}$ by  the change of
variables
 $$x'=x, \qquad y'=y-\mu x^{q_S},$$
 yielding
 $$x'=x_1, \qquad y'=x_1^{q_S}y_1.$$
Then $x_1^{\ag}y_1^{\bg}= (x')^{\ag-q_S\bg}(y')^{\bg}.$ Let
$\ag'=\ag-q_S \bg$, $\bg'=\bg$. This is a linear automorphism of
the plane.  Any face of the original second Newton polygon with
equation $p\ag+r\bg=N$ corresponds to a face on the new first Newton
polygon with equation
 $p\ag'+(r+pq_S)\bg'=N$, hence lying under $S'$.
Analogously, with the Newton
 map $\sigma_{(1,q_S, - \mu)}:$
 $$x'=x'_1, \qquad y'= (x'_1)^{q_S}(y'_1-\mu)$$
one verifies that $x^{\ag}y^{\bg}= (x'_1)^{\ag+q_S\bg}(y'_1)^{\bg}$
and that a face under $S$ of the original first polygon corresponds
to a face of the new second polygon. Consequently $\mathcal{T}'$ is
indeed the Newton tree of  $\Ic'$.

If the horizontal edge we have chosen ends with an arrow, we have
$\Ic=(y+h(x))^{\nu}\Ic_1$, with $h(x)\in x\bc[[x]]$, $h(x)\neq 0$
and $\nu >0$. In that case, we perform the change of variables
$x'=x,y'=y+h(x)$.
 \end{proof}

\bigskip

\noindent
 {\bf Example 1 {\rm (continued)}.}
To obtain the minimal Newton tree in Figure 17, we first perform the
change of coordinates
$$x'=x, \qquad y'=y+x .$$
This leads to
$\Ic'=((y'-x')^2-3x')\big(y'(y'-x')^4,(y')^{3}+(x')^{8}\big)$. We
can write $(y'-x')^2-3x'$ as the product of a unit and a series of
the form $x'+h(y')$ with $h(y')\in y'\bc[[y']]$. After a second
change of coordinates
$$x"=x'+h(y'),  \qquad y"=y'$$
We obtain an ideal in $\bc[[x",y"]]$ with the tree in Figure 17 as
Newton tree.

\bigskip

\begin{remark}
Minimal Newton trees are not unique. For example, in Example 1 one
can alternatively perform first the coordinate change
$$x'=-3x+y^2, \qquad y'=y .$$
After a second coordinate change
$$x"=x',  \qquad y"=y'-\frac 13 x'$$
the resulting ideal has the Newton tree given in Figure 18.

    \begin{figure}[ht]
 \begin{center}
\includegraphics{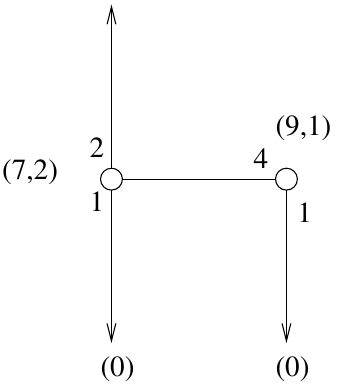}
\caption{}
 \end{center}
 \end{figure}

\end{remark}

\medskip
Next we introduce the notion of (very) good coordinates; the point is that a Newton tree is minimal if and only if the Newton algorithm is performed in very good coordinates at each step.

 \begin{definition}
Let $\Ic$ be a non-trivial ideal in $\bc[[x,y]]$ and $\nc(\Ic)=\cup
_{1\leq i\leq m}S_i$ its Newton polygon. We say that $\Ic$ is in
{\em good coordinates} if $\ti (\Ic,S_m)$ is not a principal ideal
generated by a polynomial of the form $x^k(y-\mu x^q)^l$ with $k\geq
0$, $l\geq 1$ and $\mu \in \bc ^*$.
 \end{definition}

\begin{lemma}
 Let $\Ic$ be a non-trivial ideal in $\bc[[x,y]]$. There exists a system of coordinates which is good for $\Ic$.
 \end{lemma}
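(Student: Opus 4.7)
The plan is to iterate a single $y$-translation and argue that the process either terminates or converges $(x)$-adically to a good coordinate system.

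Suppose $\Ic$ is not in good coordinates in the current system. Then the last face $S_m$ of $\nc(\Ic)$ has initial ideal
\[
\In(\Ic, S_m)=\bigl(x^k(y-\mu x^q)^l\bigr)
\]
for some $k\ge 0$, $l\ge 1$, $\mu\in\bc^*$. In particular $p_{S_m}=1$, $q_{S_m}=q$, and the supporting line $l_{S_m}$ has equation $\alpha+q\beta=k+ql$, connecting $(k,l)$ and $(k+ql,0)$. I would perform the analytic coordinate change $x'=x$, $y'=y-\mu x^q$, obtaining a new ideal $\Ic^{(1)}$; this is the same sort of operation as in the proof of Proposition~\ref{exchange}, applied to the last face of the Newton polygon.

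Under this translation, $(1,q)$-weighted degree is preserved on every monomial, so the support of $\Ic^{(1)}$ on the line $\alpha+q\beta=k+ql$ reduces to the single vertex $(k,l)$ (because every generator of $\Ic$ has $(1,q)$-initial part a scalar multiple of $x^k(y-\mu x^q)^l$, which becomes $x^{\prime k}y^{\prime l}$). In particular no support appears below that line, and the new leftmost support on the $x$-axis lies at some $k^{(1)}>k+ql$, produced by binomial expansion of the higher-weight terms of the original generators. If the new last face is again of the prohibited form, a slope comparison using that $(k,l)$ is a vertex of $\nc(\Ic^{(1)})$ lying on the line of $(1,q)$-weight exactly $k+ql$ forces its parameter $q^{(1)}$ to satisfy $q^{(1)}>q$.

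Iterating, either the procedure stops after finitely many steps --- yielding good coordinates directly --- or it produces an infinite sequence of translations $y\mapsto y-\mu_i x^{q_i}$ with $q_1<q_2<\cdots$. In the latter case the single substitution $y\mapsto y-h(x)$ with $h(x)=\sum_{i\ge 1}\mu_i x^{q_i}\in x\bc[[x]]$ (well defined since $q_i\to\infty$) is a legitimate analytic change of coordinates in $\bc[[x,y]]$, and $\Ic$ is in good coordinates in this limit system: any persistent bad datum with parameter $q^{\infty}$ would already be detected and corrected at the finite stage at which $q_i$ first exceeds $q^{\infty}$, contradicting the construction.

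The main obstacle is verifying the strict inequality $q^{(1)}>q$ at each iteration. The delicate case is when the left endpoint of the new last face is not $(k,l)$ itself but some other vertex $(k',l')$ with $l'<l$ and $k'+ql'>k+ql$; one must then combine the convexity of $\nc(\Ic^{(1)})$ with the fact that $(k,l)$ is a vertex lying on a lower $(1,q)$-weight line than $(k',l')$ to rule out $q^{(1)}\le q$. A secondary subtlety is the detection argument that guarantees the infinite limit truly yields good coordinates, which follows from the strict monotonicity $q_i\to\infty$ together with the finite-determinacy of the $(1,q)$-initial behaviour modulo high powers of $x$.
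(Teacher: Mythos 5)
Your proposal is correct and follows essentially the same route as the paper: iterate the translation $y\mapsto y-\mu x^{q}$ killing the offending last face, observe that the relevant exponent strictly increases (your convexity argument does close this, since the only support point of the new diagram on the line $\alpha+q\beta=k+ql$ is the vertex $(k,l)$, forcing every face below it to be strictly flatter), and in the non-terminating case absorb the whole sequence into a single substitution $y\mapsto y-h(x)$ with $h\in x\bc[[x]]$. The paper's proof is the same iteration, with the convergence details you sketch simply delegated to \cite[Lemma 1.3]{LO} and \cite[Theorem 6.40]{KSC}.
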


 \begin{proof}
Assume that the ideal $\Ic$ is not in good coordinates. Then there
exist $k\geq 0$, $l\geq 1$ and $\mu \in \bc ^*$
 such that
 $$\ti (\Ic,S_m)=(x^k(y-\mu x^q)^l) .$$
Consider the automorphism of $\bc^2$ defined by $x = x'$, $y =
y'+\mu (x')^q$. This automorphism does not change the faces $S_1,
\cdots,S_{m-1}$ of $\nc(\Ic)$ but makes the face $S_m$ disappear. If
this new system of coordinates is not good for $\Ic$, then we
consider a new automorphism and so on. The process is finite if the
ideal has $\{0\}$ as support. However, if the support of $\Ic$
contains a curve, that process can be infinite. In that case we use
a coordinate change of the form $x=x'$, $y=y'+h(x')$ with $h(x')\in
x'\bc[[x']]$. The details for this argument are the same as in the
case of a principal ideal as explained in e.g. \cite[Lemma 1.3]{LO}
or \cite[Theorem 6.40]{KSC}.
 \end{proof}

As an example of the first case, the ideal
$\Ic_1=(y^4,x^2(x^2+y)+x^3y^2)$ is not in good coordinates. We
obtain good coordinates after the change $x=x'$, $y=y'-(x')^2$. For
the second case, consider $\Ic_2 =(y+x^2+xy)$. Here the system of
good coordinates is obtained by $x = x'$, $y = y' -
(x')^2(1+x')^{-1}$.

 \begin{remark}
Note that, in the procedure above, we do not eliminate dicritical
faces.
 \end{remark}

 \begin{definition}
 Let $\Ic$ be a non-trivial ideal in $\bc[[x,y]]$ and $\nc(\Ic)=\cup _{1\leq i\leq m}S_i$ its Newton polygon. We say that
$ \Ic$ is in {\em very good coordinates} if the following conditions are satisfied.
\begin{itemize}
\item For $m>1$: $\Ic$ is in good coordinates and $\ti (\Ic,S_1)$ is not a principal ideal generated by a polynomial of the form $y^k(y^p-\mu x)^l$
with $k\geq 0$, $l\geq 1$ and $\mu \in \bc ^*$.

\item For $m=1$: $\ti (\Ic,S_1)$ is not a principal ideal generated by a polynomial of the form
$x^k(y-\mu x^q)^l$ or $y^k(y^p-\mu x)^l$ as before,  or $(y-\mu_1
x)^{l_1}(y-\mu_2 x)^{l_2}$ with $l_1,l_2\geq 1$ and $\mu_1,\mu_2 \in
\bc^*$.
\end{itemize}
 \end{definition}

\noindent
 One verifies analogously that there exists a system of
coordinates which is very good for $\Ic$. (For the last case one
performs the coordinate change $y-\mu_1 x = x'$, $y-\mu_2 x = y'$.)

\bigskip
For example, the ideal $\Ic=(x^2-y^2,x^3)$ is not in very good
coordinates. We obtain very good coordinates after the change
$x=(x'+y')/2$, $y=(y'-x')/2$.

\begin{remark}
If $\Ic=(x^N)\mathcal{J}$ with $N\geq 1$, then good coordinates for $\Ic$ implies very good coordinates for $\Ic$.
\end{remark}


The previous discussion shows the following.

\begin{remark}
The Newton tree is minimal if and only if the Newton algorithm is performed in very good coordinates at each step.
\end{remark}

\bigskip
\begin{remark}
The notion of depth of an ideal from \cite{CV}, see Definition
\ref{def-depth}, depends heavily on the chosen coordinates. In
\cite{CV} we alluded to a possible more intrinsic notion. We can now
define the {\em geometric depth} of an ideal in $\bc [[x,y]]$ as the
minimum of the widths of the minimal Newton trees of $\Ic$.

For example the ideal in Example 1 has depth 2 but geometric depth 1.
\end{remark}

\medskip
 \section{Geometric interpretation}

\medskip
In the case of a principal ideal $\Ic=(f)$ of $\bc [[x,y]]$ it is
shown in \cite[Theorem 3.6]{CNL} that the Newton tree of $\Ic$ is
more or less equal to the dual graph of the minimal embedded
resolution of $f$, where vertices of valence $2$ are deleted. See
also \cite{LO} (especially Lemma 2.12 and Theorem 3.1.2(i)). These
proofs immediately generalize to arbitrary ideals, yielding the
results in the theorem below.

We first recall the notion of (minimal) log principalization of an ideal (in our context).

\medskip
Let $\Ic$ be a non-trivial ideal in $\bc [[x,y]]$. A {\em log
principalization} (or {\em monomialization}) of $\Ic$ is a proper
birational morphism $h: X \to \spec \bc [[x,y]]$ from a smooth
variety $X$ such that the ideal sheaf $\Ic \mathcal{O}_{\bar X}$ is
invertible and moreover its associated divisor of zeroes is a
(strict) normal crossings divisor. The {\em minimal log
principalization} of $\Ic$ is the unique such $h$, up to
isomorphism, through which all other ones factorize.

\begin{theorem}\label{principalization}
Let $\Ic$ be a non-trivial ideal in $\bc [[x,y]]$ and $T$ a minimal
Newton tree of $\Ic$. There exists a toroidal surface $\bar X$ and a
proper birational morphism $\bar \pi:\bar X \to \spec \bc [[x,y]]$
with the following properties. Let $E_j,j\in J,$ denote the
irreducible components of the exceptional locus of $\bar \pi$ and
$C_i,i\in I,$ the strict transform of the one-dimensional
irreducible components of the support of $\Ic$ (if any).

(1) There is a natural bijection between the $E_j$ and the vertices
of $T$, and between the $C_i$ and the arrowheads of $T$ that are not
decorated by $(0)$.

(2) The ideal sheaf $\Ic \mathcal{O}_{\bar X}$ is invertible, with associated
normal crossings divisor (in orbifold sense) $\bar D=\sum_{i\in I}
N_i C_i + \sum_{j\in J} N_j E_j$, where the coefficients $N_j$ and
$N_i$ are the decorations of the corresponding vertices and arrows,
respectively, of $T$.

(3) The singularities of $\bar X$ are located on the intersections
of the components $E_j$. They are cyclic quotient singularities of
order equal to the edge determinant of the corresponding edge in
$T$.

The composition of the resolution map $h:X\to \bar X$ of these
singularities and the map $\bar \pi$ yields the minimal log
principalization map $\pi : X \to \spec \bc [[x,y]]$ of the ideal $\Ic$.
\end{theorem}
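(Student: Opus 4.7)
The plan is to prove this by induction on the width of the minimal Newton tree $T$ (equivalently, on the geometric depth of $\Ic$), mimicking the strategy for principal ideals in \cite{CNL} and \cite{LO} and using the toric description of a Newton map as a local chart. Before starting the induction, I would reduce to the case where $\Ic$ is in very good coordinates, so by the remark at the end of $\S4$ the tree $T$ is exactly the Newton tree produced by the Newton algorithm in these coordinates.

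For the base case (width $0$) the ideal is generated by a monomial of the form $x^{k}(y+h(x))^{\nu}$; after the coordinate change $y\mapsto y+h(x)$ (which is allowed since we already fixed very good coordinates up to that operation) the divisor of $\Ic$ on $\bar X=\spec\bc[[x,y]]$ is $kC_x+\nu C_y$, and $T$ is a single edge joining two arrows decorated by $k$ and $\nu$; so (1)--(3) are clear. For the inductive step the first Newton polygon $\nc(\Ic)$ determines in the standard way a regular fan $\Sigma$ in $\br_{+}^{2}$ whose $1$-dimensional rays are the primitive vectors $(p_S,q_S)$ associated with the compact faces of $\nc(\Ic)$ plus the two coordinate rays; this gives a toric (hence toroidal) modification $\bar\pi_{0}\colon \bar X_{0}\to\spec\bc[[x,y]]$. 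For each compact face $S$ one obtains an exceptional component $E_{S}$ along which $\Ic\mathcal O_{\bar X_0}$ has multiplicity exactly $N_{S}$ — this is immediate from the equation $p_{S}\alpha+q_{S}\beta=N_{S}$. At the intersection point of two consecutive components $E_{S}\cap E_{S'}$ the toric chart is $\spec\bc[[u,v]]/\,(\text{lattice})$, and a direct computation shows that it is a cyclic quotient singularity of order $|p_Sq_{S'}-p_{S'}q_S|$, which is exactly the edge determinant of the vertical edge joining the corresponding vertices of $T$. The same formula accounts for the horizontal edges meeting the coordinate arrows.

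Now I would analyze what $\Ic\mathcal O_{\bar X_0}$ looks like at the generic point of each $E_S$. If $S$ is dicritical, then by Lemma \ref{Newton-algbis} the ideal is already principal along $E_{S}$, so no further blow-up over a generic point is needed; the degree $d_{S}$ encodes the remaining $d_{S}$ transverse strict-transform branches meeting $E_S$ (the arrowheads attached to a dicritical vertex). If $S$ is not dicritical, $\Ic\mathcal O_{\bar X_0}$ fails to be principal exactly at the finitely many points of $E_S$ corresponding to the roots $\mu$ of $F_{\Ic,S}$; the Newton map $\sigma_{(p_S,q_S,\mu)}$ is (up to the resolution of the cyclic quotient singularity, if any) a local chart at such a point. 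By the construction of the Newton tree, the subtree hanging from the vertex $v_{S}$ through this horizontal edge is, after the recursive decoration rule, the Newton tree of $\Ic_{\sigma}$. By induction we have a toroidal morphism realizing that subtree; I glue it into $\bar X_0$ at the corresponding point of $E_S$. The gluing is well defined because distinct Newton charts $\sigma_{(p_S,q_S,\mu)}$ correspond to disjoint points on $E_S$, and the recursive decoration rule $(m,p)\mapsto(m+p_iq_ip_{i-1}^2\cdots p_1^2 p,p)$ is precisely the transformation formula for multiplicities under the toric chart, which yields assertion (2) with the decorations of the full tree. The same toric local computation verifies (3) for edges introduced in the inductive step.

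The only nontrivial remaining point is the identification with the \textbf{minimal} log principalization. The composition $\pi=\bar\pi\circ h$ is a log principalization by construction, so it suffices to show that no exceptional component $E_j$ can be blown down, i.e.\ that $E_j$ is never a smooth rational $(-1)$-curve meeting at most one other component of the total transform $\bar D$ transversally and intersecting no branch of $\Ic$ whose multiplicity in $\bar D$ is relevant. A component $E_j$ contractible in the log-canonical sense corresponds to a vertex $v$ of valence $\leq 1$ in the dual graph of $\bar D$ such that the branch of the divisor attached to $v$ has multiplicity $0$ — in tree language, $v$ is a non-dicritical valence-$\leq 3$ vertex joined to an arrow decorated $(0)$ by an edge decorated $1$. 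This is exactly the configuration that the minimality condition of Definition (minimal Newton tree) forbids; and the exchange of vertical edge operation of Proposition \ref{exchange} corresponds on the geometric side to re-choosing which branch through the intersection $E_j\cap E_{j'}$ is regarded as vertical, which does not affect the isomorphism class of $\pi$. Hence $\pi$ is minimal. The step I expect to be the most delicate is the bookkeeping of the decoration rule on successive horizontal edges: one must check that the inductive update $(m,p)\mapsto (m+p_iq_ip_{i-1}^2\cdots p_1^2 p,p)$ exactly matches the change of the self-intersection number (and hence the cyclic quotient order) of $E_S$ under the successive toric charts, and this is where following \cite[Lemma 2.12]{LO} and \cite[Theorem 3.6]{CNL} most closely pays off.
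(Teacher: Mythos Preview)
Your inductive plan following \cite{CNL} and \cite{LO} is exactly what the paper does: it states the theorem without an independent proof, remarking only that the arguments of \cite[Theorem~3.6]{CNL} and \cite[Lemma~2.12, Theorem~3.1.2(i)]{LO} for principal ideals ``immediately generalize to arbitrary ideals''. Your outline is a faithful (and considerably more detailed) expansion of that remark; the one slip is the claim that a dicritical vertex carries $d_S$ transverse arrowheads---the integer $d_S$ is recorded in the vertex decoration $(N_S,d_S)$ but does not itself produce arrows in $T$.
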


Morally the tree $T$ is the dual graph of the divisor $\bar D$ in
$(2)$. More precisely, one obtains this dual graph by erasing in $T$
the arrowheads decorated with $(0)$  and all edges starting from
those arrowheads and decorated by $1$ on the other side.

What was still missing is the meaning of the edge decorations of $T$
in the dual graph of $\bar D$. The next proposition says that these
edge decorations have the \lq correct\rq\ interpretation as classical edge
decorations in a dual graph.

\begin{prop}\label{edgedecorations} We use the notation of Theorem
\ref{principalization}.  Denote $D=h^* \bar D$. The decoration on an
edge $e$ near a vertex $v$ of the Newton tree $T$ is the absolute
value of the determinant of the intersection matrix of those
components of $D$ (in the surface $X$), corresponding to the part of
the tree away from $v$ in the direction of $e$.
\end{prop}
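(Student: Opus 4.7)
The strategy is to combine the classical recursive computation of determinants of intersection matrices of tree configurations of rational curves with the Hirzebruch-Jung theory of cyclic quotient singularities. I would first refine the Newton tree $T$ to the actual dual graph $\tilde T$ of $D$ on the smooth surface $X$: by Theorem \ref{principalization}(3), each edge $e$ of $T$ with nontrivial edge determinant $n_e$ corresponds to a cyclic quotient singularity of order $n_e$ on $\bar X$, and its resolution by $h:X\to \bar X$ inserts a Hirzebruch-Jung chain of smooth rational curves along $e$. Thus $\tilde T$ is exactly the dual graph of the normal crossings divisor $D$ on $X$. The key input from Hirzebruch-Jung theory is that, writing $n_e/q_e = [b_1,\ldots,b_s]$ as a Hirzebruch-Jung continued fraction for the appropriate $q_e$ (determined by the adjacent edge decorations in $T$), the inserted chain has self-intersections $-b_1,\ldots,-b_s$, and for each $k$ the intersection matrix of the first $k$ curves of the chain has determinant of absolute value equal to the numerator of the truncated continued fraction $[b_1,\ldots,b_k]$; the self-intersections $-b_v$ of the exceptional divisors $E_v$ corresponding to vertices $v$ of $T$ are likewise determined by the adjacent edge decorations via the toric data of the corresponding Newton maps.

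I would then proceed by induction on the number of vertices of $T$ lying in the subtree $T_{e,v}$ obtained from $T$ by taking the part away from $v$ in the direction of $e$. The base case is when $T_{e,v}$ is a single arrow or a single leaf vertex; here the determinant on $\tilde T$ of the corresponding components of $D$ reduces to the Hirzebruch-Jung numerator computation above and matches the edge decoration by direct comparison. For the inductive step, peel off an outermost vertex $w$ of $T_{e,v}$ and apply the standard recursive expansion of the determinant of a tree-like intersection matrix: this expresses $|\det M|$ in terms of the self-intersection $-b_w$ and of determinants of intersection matrices of the Hirzebruch-Jung chains attached to $w$ and of the subtrees branching off from $w$, each of which falls under the inductive hypothesis. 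The edge decoration gluing rule of $\S 4.2$ (updating $m \mapsto m + p_i q_i p_{i-1}^2\cdots p_1^2 p$ after gluing at $v_i$) then matches exactly the recursion produced by combining the determinant expansion with the Hirzebruch-Jung numerators.

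The main obstacle will be the careful bookkeeping: matching term by term the continued-fraction convergents in the Hirzebruch-Jung chains with the recursive determinant expansion at each internal vertex of $T$, and with the edge decoration transformation rule under gluing. Additionally, arrows (strict transforms of one-dimensional components of the support of $\Ic$) and dicritical vertices need slightly separate treatment, since the corresponding components have somewhat different numerical behavior from the generic exceptional divisors; the same inductive scheme applies once the modified base cases are checked. Since the principal case is essentially the statement proved in \cite{CNL} via the same method, the argument above can be viewed as the direct generalization, with the ideal-theoretic features (dicritical faces, multiple arrows at a vertex) absorbed into the recursion.
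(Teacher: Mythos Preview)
Your approach is correct in outline and would work, but it differs from the paper's argument in an instructive way. The paper's proof rests on a single structural observation that lets it bypass all of the Hirzebruch--Jung bookkeeping you anticipate: the absolute value of the determinant of the intersection matrix is \emph{invariant under blowing up}. Concretely, if $\tilde S\to S$ is a point blow-up with exceptional curve $E$ and $\tilde C$ is the strict transform of a configuration $C$, then $\det(C)=\det(\tilde C\cup\{E\})$. This means one never needs to write down the self-intersections $-b_i$ along the resolution chains or match continued-fraction convergents term by term; the determinant of the full configuration in $X$ equals the determinant of any partial contraction, in particular of the toric configuration coming from a single Newton polygon. The paper then organizes the argument by position in the tree: edge decorations on the first vertical line are read off directly from toric geometry (citing \cite[Theorem~10.4.4]{CLS}), decorations below vertices on later lines follow by the same reasoning since they are unchanged under gluing, and decorations above vertices on later lines are handled by induction on the number of preceding vertices, using the gluing identity $q=p_0q_0p+m$ from \cite[Proposition~3.1]{CV} together with a single $2\times 2$-block determinant expansion.

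Your route---explicitly resolving each edge into a Hirzebruch--Jung chain and inducting on the size of the subtree via the tree-recursion for determinants---is a legitimate alternative and is closer in spirit to the argument in \cite{CNL} for the principal case. What it buys is a more hands-on verification that does not appeal to blow-up invariance as a black box; what it costs is exactly the ``careful bookkeeping'' you flag, which the paper's approach sidesteps entirely. If you pursue your version, the main point to make precise is that the self-intersections at the vertices $v$ of $T$ (not just along the inserted chains) are determined by the adjacent toric data, so that the recursive determinant expansion at $v$ really does reproduce the gluing rule; this is where the blow-up invariance shortcut pays off most.
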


\begin{proof}
We first fix some terminology. We will often talk about the
determinant of minus the intersection matrix (which is the same as
the absolute value of the determinant of the intersection matrix) of
a configuration $C$ of (irreducible) curves on a smooth surface. For
sake of brevity we denote $\det(C)$ for this determinant.

Recall that $\det(\cdot)$ is invariant under blowing up: when $C$ is
a configuration of curves on a smooth surface $S$ and $\tilde S \to
S$ is a point blow-up with exceptional curve $E$, then, denoting by
$\tilde C$ the configuration in $\tilde S$ of strict transforms of
curves in $C$, we have $\det(C)=\det(\tilde C \cup \{E\})$.

Also, with the configuration $C$ of curves associated to a part
$\Delta$ of the Newton tree $T$ we mean the total transform in $X$
of the configuration of curves in $\bar X$, associated by the
bijection in Theorem \ref{principalization}(1) to the vertices in
$\Delta$. We then define $\det(\Delta):=\det(C)$. With this
terminology we must show:

\smallskip
\begin{center}
($*$)\hfill
\begin{minipage}{.94\linewidth}
{\em any decoration on an edge $e$ near a vertex $v$ of $T$ is equal
to $\det(\Delta)$, where $\Delta$ is the connected component of
$T\setminus \{v\}$ in the direction of $e$.}
\end{minipage}
\end{center}

\smallskip
\noindent
(i) Any (implicit) edge decoration $1$ on the left of a
horizontal edge (i.e., right to a vertex) satisfies ($*$).

\smallskip
Indeed, the associated part $\Delta$ corresponds in $X$ to the total
exceptional locus of a composition of blow-ups.

\smallskip
\noindent (ii) Any edge decoration on the first vertical line
satisfies ($*$).

\smallskip
Fix a vertex $v$ on the first vertical line of $T$ with edge
decorations $q$ and $p$ above and below $v$, respectively. Denote by
$Q_1$ and $P_1$ the parts of $T$ {\em on the first vertical line}
above and below $v$, respectively, and by $Q$ and $P$ the total
connected components of $T\setminus \{v\}$ in the direction of $q$
and $p$, respectively. E.g. \cite[Theorem 10.4.4]{CLS} immediately
yields $q=\det(Q_1)$ and $p=det(P_1)$. And because $\det(\cdot)$ is
invariant under blowing up, we have $\det(Q_1)=\det(Q)$ and
$\det(P_1)=\det(P)$. We conclude that $q=\det(Q)$ and $p=\det(P)$,
finishing case (ii).

\smallskip
\noindent (iii) Any edge decoration below a vertex on another
vertical line satisfies ($*$).

\smallskip
Because the edge decorations below vertices do not change during the
construction of the Newton tree $T$, we can use exactly the same
arguments as in the previous case.

\smallskip
\noindent (iv) Any edge decoration above (or left to) a vertex on
another vertical line satisfies ($*$).

\smallskip Here we use induction on the number of preceding vertices
of the given vertex. Fix a vertex $v$ with edge decorations $q$ and
$p$, and say $v_0$, with edge decorations $q_0$ and $p_0$, is its
preceding vertex (see Figure 19).

    \begin{figure}[ht]
 \begin{center}
\includegraphics{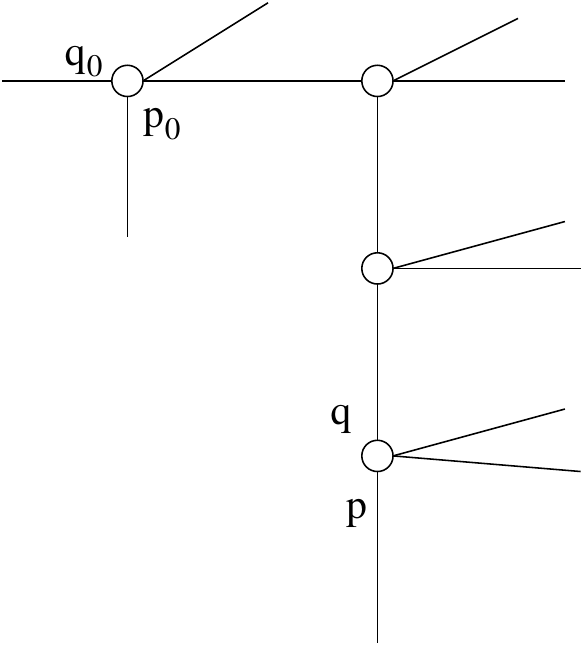}
\caption{}
 \end{center}
 \end{figure}

Let also $m$ be the original
decoration above $v$ when drawing the vertical line of $v$
associated to a Newton diagram. Then by \cite[Proposition 3.1]{CV}
we have
 \begin{equation}\label{equality1}
 q=p_0q_0p+m .
 \end{equation}
We introduce notation for the relevant parts of $T$. Let $Q$ and $P$
be the connected components of $T\setminus \{v\}$ in the direction
of $q$ and $p$, respectively, and similarly $Q_0$ and $P_0$. Let
also $\Delta$ denote the chain between $v$ and $v_0$, i.e., the part
above $v$ of the original vertical line of $v$ associated to a
Newton diagram. As in (ii), we have that
\begin{equation}\label{equality2}
m=\det(\Delta),\quad p=\det(P) \quad\text{and}\quad p_0=\det(P_0).
 \end{equation}
Our induction hypothesis says that
\begin{equation}\label{equality3}
q_0=\det(Q_0).
 \end{equation}
Combining (\ref{equality1}), (\ref{equality2}) and (\ref{equality3})
yields $q=\det(P_0)\det(Q_0)\det(P)+\det(\Delta)$. Finally by a
(well known) determinant computation this right hand side is equal
to $\det(Q)$.
\end{proof}

Under the bijective correspondence in \ref{principalization} between vertices of $T$ and exceptional components of $\bar \pi$, the dicritical vertices of $T$ have a conceptual geometric meaning. We fix some notation in order to describe this relation.

Let $b: B\to \spec \bc [[x,y]]$ denote the normalized blow-up of the ideal $\Ic$. Since the ideal sheaf $\Ic\mathcal{O}_{\bar X}$ is invertible, the map $\bar \pi :\bar X \to \spec \bc [[x,y]]$ factorizes through $b$, yielding a morphism $p:\bar X \to B$.

\begin{theorem} We use the notation of Theorem
\ref{principalization}.
Under the bijection (1), the dicritical vertices of the Newton tree $T$  correspond with the exceptional components of the normalized blow-up of the ideal $\Ic$, that is, with those exceptional components of $\bar \pi$ that are mapped onto a curve by $p$.
\end{theorem}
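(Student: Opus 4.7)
The plan is to combine an intersection-theoretic criterion for $p$ contracting an exceptional curve with an inductive analysis via the Newton algorithm. Since $b:B\to\spec\bc[[x,y]]$ is the normalized blow-up of $\Ic$, the ideal sheaf $\Ic\mathcal{O}_B$ is invertible and $b$-relatively ample (being obtained by pullback via the finite normalization map from the tautological relatively ample line bundle on the ordinary blow-up). Pulling back via $p$ gives $\Ic\mathcal{O}_{\bar X}\cong\mathcal{O}_{\bar X}(-\bar D)\cong p^*(\Ic\mathcal{O}_B)$, a $\bar\pi$-relatively nef line bundle. Consequently, an exceptional component $E_v$ of $\bar\pi$ is contracted by $p$ to a point if and only if $\mathcal{O}_{\bar X}(-\bar D)\cdot E_v=0$, equivalently $\bar D\cdot E_v=0$; otherwise $p(E_v)$ is a curve in $B$ and $\bar D\cdot E_v<0$. (The intersection is computed in the orbifold sense on $\bar X$, or equivalently on the smooth resolution $X$ after pullback by $h$.) The theorem thus reduces to the dichotomy $\bar D\cdot E_v=0 \Leftrightarrow v$ is non-dicritical.

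I would proceed by induction on the depth of the minimal Newton tree $T$. The base case $d(\Ic)=0$, when $T$ has no vertex, is vacuous. For a vertex $v$ not on the first vertical line of $T$, $v$ belongs to the minimal Newton tree of some $\Ic_\sigma$ with $\sigma=\sigma_{(p_S,q_S,\mu)}$. Locally near the base point on $\bar X$ corresponding to the Newton map $\sigma$, the normalized blow-up $B$ of $\Ic$ factors through the normalized blow-up $B_\sigma$ of $\Ic_\sigma$, so essentiality of $E_v$ transfers and the inductive hypothesis applied to $\Ic_\sigma$ gives the conclusion.

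For a vertex $v$ on the first vertical line, corresponding to a face $S$ of the first Newton polygon with equation $p_S\alpha+q_S\beta=N_S$, I would compute $\bar D\cdot E_v$ directly using the toric structure of the first Newton modification. Expanding
\[
\bar D\cdot E_v = N_v\, E_v^2 + \sum_{w\sim v} N_w(E_w\cdot E_v) + \sum_{a\sim v} N_a(C_a\cdot E_v),
\]
the orbifold self-intersection $E_v^2$ and the adjacent intersection numbers are determined by the edge decorations of $T$ via Proposition \ref{edgedecorations} together with the standard Hirzebruch--Jung formulas for toric surfaces. Using the face equation at the two endpoints of $S$ and these identities, all contributions cancel exactly when $d_S=0$; in the dicritical case $d_S\geq 1$, the non-principal part $(k_1,\ldots,k_s)$ of $\ti(\Ic,S)$ contributes additional strict-transform branches meeting $E_v$ (encoded by the dicritical arrows at $v$ in $T$), producing an uncanceled negative contribution so that $\bar D\cdot E_v<0$.

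The main obstacle is this final combinatorial intersection computation. It requires careful bookkeeping of the orbifold self-intersection $E_v^2$ (a rational number determined by the adjacent fan rays), of the various intersection numbers with adjacent exceptional divisors and with strict transforms of branches of the face polynomial $F_{\Ic,S}$ and of the $k_i$'s, and of which arrows of $T$ actually correspond to divisors meeting $E_v$ (as opposed to arrows introduced only at later stages of the Newton algorithm, which contribute zero here). A cleaner route in practice is to pull back to the smooth resolution $X$ and compute integer-valued intersection numbers $h^*\bar D\cdot \widetilde{E_v}$ there, handling the chains of exceptional curves that replace each quotient singularity at the ends of $E_v$ explicitly. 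Either way, the dichotomy $d_S=0\Leftrightarrow \bar D\cdot E_v=0$ falls out of the face equation $p_S\alpha+q_S\beta=N_S$ and the coprimality of the $k_i$'s.
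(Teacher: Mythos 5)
Your opening reduction is sound: since $\Ic\mathcal{O}_B$ is $b$-ample and $\Ic\mathcal{O}_{\bar X}=p^*(\Ic\mathcal{O}_B)=\mathcal{O}_{\bar X}(-\bar D)$, an exceptional component $E_v$ is mapped onto a curve by $p$ if and only if $\bar D\cdot E_v<0$, and contracted if and only if $\bar D\cdot E_v=0$. But this only restates the theorem as the dichotomy $\bar D\cdot E_v<0 \Leftrightarrow v$ dicritical, and that dichotomy --- the actual content of the statement --- is never proved. In your final two paragraphs you describe the toric/Hirzebruch--Jung bookkeeping that would be needed and then assert that ``all contributions cancel exactly when $d_S=0$'' and that the dicritical case ``produc[es] an uncanceled negative contribution''; you even flag this as ``the main obstacle.'' That is the gap: the cancellation identity relating $N_v E_v^2$, the decorations of the adjacent edges, the face equation $p_S\alpha+q_S\beta=N_S$, and the $d_S$ extra branches is precisely what has to be verified, and nothing in the proposal verifies it. The inductive step is also under-argued: to transfer the question from $\Ic$ to $\Ic_\sigma$ you need that, on the germ of the modified surface at the centre of the Newton map (a smooth point, identified with $\spec\bc[[x_1,y_1]]$), the pulled-back ideal equals a principal monomial ideal times $\Ic_\sigma$, so that the two normalized blow-ups agree over that germ; ``$B$ factors through $B_\sigma$, so essentiality transfers'' skips this identification.

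For comparison, the paper's proof avoids intersection theory entirely: by \cite[Proposition 5.12]{CV} the dicritical vertices of $T$ correspond to the Rees valuations of $\Ic$, and by \cite{SH} the Rees valuations are exactly the divisorial valuations given by the exceptional components of the normalized blow-up of $\Ic$; combining the two gives the theorem in one line. Your nef-pullback criterion is a legitimate alternative strategy (essentially a Zariski-type contraction criterion), but to make it a proof you would have to carry out the orbifold intersection computation on $\bar X$ (or the integral one on $X$) for a vertex of the first Newton polygon, including the case of dicritical arrows, and justify the local reduction along Newton maps --- or else shortcut it, as the paper does, through the Rees-valuation characterization.
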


\begin{proof} By
\cite[Proposition 5.12]{CV} the dicritical vertices of $T$ correspond to the Rees valuations of $\Ic$. Next use \cite{SH}.
\end{proof}

\medskip
In fact the morphism $p:\bar X \to B$ is a natural and important \lq
partial resolution\rq\ of the divisor of $\Ic\mathcal{O}_{B}$ in $B$
in the framework of the Minimal Model Program; it is precisely the
relative log canonical model of this divisor in $B$. We refer to
\cite{KM} for definitions.

\medskip

\begin{prop}
The motivic zeta function of an ideal in $\bc [[x,y]]$ only depends
on a minimal Newton tree of the ideal. When performing the Newton
algorithm in very good coordinates, all candidate poles in the
formula of Theorem \ref{formulazetafunction} are true poles.
\end{prop}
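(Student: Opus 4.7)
The plan is to derive both assertions from Propositions \ref{exchange}, \ref{towardsminimal} and \ref{no-contribution}, together with the earlier remark that a Newton tree is minimal precisely when the Newton algorithm is performed in very good coordinates at every step.

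For the first assertion, I would argue as follows. The motivic zeta function $\zg(\Ic,\og)(T)$ is intrinsic to $(\Ic,\og)$ and hence invariant under any change of coordinates. Given a minimal Newton tree $T_{\min}$ of $\Ic$, Propositions \ref{exchange} and \ref{towardsminimal} produce a sequence of coordinate changes that transform $\Ic$ into an ideal $\Ic'$ whose Newton tree in the new coordinates is precisely $T_{\min}$. One then checks by induction on the depth that every ingredient appearing in the formula of Theorem \ref{formulazetafunction} applied to $\Ic'$ is readable off $T_{\min}$: the cones $\Dg$ and lines $L$ of Definition \ref{def-cones} are encoded by the edge decorations on the first vertical line; the pair $(N_S,d_S)$ is the vertex decoration; the number $r$ of distinct roots of the face polynomial equals the number of horizontal edges leaving the vertex that do not terminate in an arrow decorated by $(0)$; and each recursive summand $\zg(\Ic_{\sg_{(p,q,\mu_i)}},\og_1)(T)$ corresponds to the minimal subtree of $T_{\min}$ glued at that vertex. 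Iterating, the entire rational expression depends only on $T_{\min}$.

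For the second assertion, suppose that the Newton algorithm has been performed in very good coordinates at every step, so that the resulting tree $T$ is minimal. The candidate poles produced by Theorem \ref{formulazetafunction} are in bijection with the vertices of $T$; each vertex with edge decorations $(q,p)$ and vertex decoration $N$ contributes a factor $(1-\bl^{-(p\nu+q)}T^N)$. Proposition \ref{no-contribution} pinpoints the unique local configuration, namely a non-dicritical valence-$3$ vertex joined through an edge decorated by $1$ to an arrow decorated by $(0)$, for which the four neighbouring summands of the formula recombine into the generating function of a cone no longer containing the face in question. This is precisely what the definition of a minimal Newton tree forbids, so in $T$ no such recombination is available and no factor $(1-\bl^{-(p\nu+q)}T^N)$ disappears.

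The main obstacle is establishing that Proposition \ref{no-contribution} identifies the \emph{only} cancellation mechanism, so that the absence of its configurations already guarantees that every candidate pole survives. I would address this by appealing to the geometric description of Theorem \ref{principalization}: the vertices of a minimal $T$ are in bijection with the exceptional components of the minimal log principalization of $\Ic$, each component carrying distinct numerical data encoded in the decorations of $T$. Since the motivic zeta function is assembled from contributions of strata indexed by these components and their numerical data are in that sense minimal, no additional algebraic identity among the rational pieces $D_{\Dg}$ and recursive subtree contributions can collapse such a factor, and every candidate pole in the formula is therefore genuine.
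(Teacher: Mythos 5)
Your treatment of the first claim is fine and matches the paper, which simply declares it clear: the zeta function is intrinsic to $(\Ic,\og)$, coordinate changes realize a given minimal tree by Propositions \ref{exchange} and \ref{towardsminimal}, and every ingredient of the formula in Theorem \ref{formulazetafunction} (equations of faces, vertex decorations, the number $r$, the recursive pieces) is readable off the tree, so the expression depends only on the minimal tree.

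The second claim is where you have a genuine gap, and you have in fact located it yourself: you must show that \emph{no} cancellation other than the one exhibited in Proposition \ref{no-contribution} can kill a candidate pole, and your proposed resolution does not do this. Saying that the numerical data attached to the vertices are ``minimal'' and that therefore ``no additional algebraic identity among the rational pieces can collapse such a factor'' is an assertion of precisely the statement to be proved, not an argument; cancellation of candidate poles in resolution-type formulas is exactly the subtle phenomenon at stake (in the classical formula via a log principalization almost all candidate poles do cancel), and Proposition \ref{no-contribution} only exhibits one cancellation mechanism, it does not classify all of them. The paper's route is different and is the one you need: by Theorem \ref{principalization} the vertices of a minimal Newton tree correspond exactly to the exceptional components of the log canonical model (minimal log principalization) of $\Ic$, and one then invokes the result of \cite{VV1}, which determines the \emph{actual} poles of the zeta function of an ideal in dimension two precisely in terms of these geometrically distinguished components. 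In other words, the genuineness of the poles is imported from an external pole-determination theorem combined with the geometric interpretation of the minimal tree, not deduced from minimality of the combinatorics alone; without such an input (or an explicit residue/specialization computation replacing it), your argument for the second assertion does not go through.
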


\begin{proof}. The first claim is clear. The second one follows from
\cite{VV1} and the geometric interpretation of the Newton tree.
\end{proof}

\medskip
 \section{The log canonical threshold}

\medskip

The log canonical threshold is an important singularity invariant.
We refer to \cite{Mu} for various equivalent definitions and more
information. We only mention the description that we need,  for
ideals in $\bc [[x,y]]$,  using the notation of Theorem
\ref{principalization}.

 Recall that
$\pi : X \to \spec \bc [[x,y]]$ is the minimal log principalization
of $\Ic$. We denote the irreducible components of $D=\Ic
\mathcal{O}_{X}$ by $F_\ell, \ell \in L$; they consist of the
exceptional components of $h$ and (the strict transforms of) the
$E_j$ and $C_i$. Write $D=\sum_{\ell\in L} N_\ell F_\ell$ and $\pi^*
(dx\wedge dy) = \sum_{\ell\in L} (n_\ell -1)E_\ell$.

\begin{definition}
Let $\Ic$ be a non-trivial ideal in $\bc [[x,y]]$. The {\em log
canonical threshold} $lct(\Ic)$ of $\Ic$ is the minimum of the
numbers $\frac{n_\ell}{N_\ell}, \ell \in L$.
\end{definition}

\begin{remark}\label{lctVV}
The minimum above is in fact always obtained on components of $\bar
D$, that is, $lct(\Ic)$ is the minimum of the numbers
$\frac{n_\ell}{N_\ell}, \ell \in I\cup J$.  See e.g. \cite[Section
3]{VV1}.
\end{remark}

Under the correspondence in Theorem \ref{principalization} one can
also describe the $n_\ell$ in terms of the Newton tree $T$. Recall
that a component of $\bar D$ corresponds to  a vertex or arrowhead
in the tree, hence also to a face of some Newton diagram appearing
in the Newton algorithm of  $\Ic$.

 We will use the following
terminology: after a composition of Newton maps, we call the
pullback of $dx\wedge dy$ {\em the differential form at that stage
of the Newton algorithm}.

\begin{lemma}\label{differentialformintree}
Let $\Ic$ be a non-trivial ideal in $\bc [[x,y]]$. Let $S$ be a
face, with equation $p_S\ag+q_S\bg=N_S$, of some Newton diagram
appearing in the Newton algorithm of  $\Ic$. Let $F_S$ be the
component of $D$ corresponding to $S$. Then $n_S= p_S\nu+q_S$, where
$x^{\nu-1}dx \wedge dy$ is
 the differential form at that stage of the Newton algorithm (in local coordinates $x,y$).
\end{lemma}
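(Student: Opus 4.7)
The plan is to induct on the number of Newton maps needed to reach the stage of the Newton algorithm at which $S$ first appears. The main observation, which both drives the induction and proves the lemma, is that a differential form of the special shape $x^{\nu-1}\,dx\wedge dy$ pulls back under any Newton map, up to a unit, to another form of the same special shape $x_1^{\nu_1-1}\,dx_1\wedge dy_1$, and that the new exponent $\nu_1$ is exactly the value of $n_S$ predicted by the lemma.

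The key local computation is direct. For the Newton map $\sg_{(p,q,\mu)}$, given by $x=\mu^{q'}x_1^p$ and $y=x_1^q(y_1+\mu^{p'})$, one finds $dx\wedge dy = p\mu^{q'}x_1^{p+q-1}\,dx_1\wedge dy_1$, and therefore
\begin{equation*}
x^{\nu-1}\,dx\wedge dy \;=\; p\,\mu^{q'\nu}\,x_1^{p\nu+q-1}\,dx_1\wedge dy_1,
\end{equation*}
with $p\mu^{q'\nu}$ a unit in $\bc[[x_1,y_1]]$. The base case of the induction is a face $S$ of the Newton polygon of $\Ic$ itself, so the form at that stage is $dx\wedge dy$, i.e.\ $\nu=1$; the local toric chart realizing $F_S$ in $\bar X$ is precisely $\sg_{(p_S,q_S,\mu)}$, in which $F_S$ is the divisor $\{x_1=0\}$. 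The identity above gives $\ord_{F_S}\bar\pi^*(dx\wedge dy)=p_S+q_S-1$, hence $n_S=p_S+q_S$, as required. Since $h\colon X\to\bar X$ is an isomorphism at the generic point of each $E_j$ (it only resolves the cyclic quotient singularities at intersections of exceptional components, by Theorem \ref{principalization}(3)), this order of vanishing is unchanged when passing from $\bar X$ to $X$.

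For the inductive step, suppose $S$ appears after a composition $\sg_1\circ\cdots\circ\sg_k$ of Newton maps and that, by induction, the pullback of $dx\wedge dy$ at this stage has the form $u\cdot x^{\nu-1}\,dx\wedge dy$ with $u$ a unit. Applying $\sg_{(p_S,q_S,\mu)}$ and invoking the pullback identity yields a total pullback of $dx\wedge dy$ of shape $u'\cdot x_1^{p_S\nu+q_S-1}\,dx_1\wedge dy_1$ with $u'$ a unit, and $F_S$ is still $\{x_1=0\}$ in this chart, so $n_S=p_S\nu+q_S$. The same computation also closes the induction: the form at the next stage has parameter $\nu_1=p_S\nu+q_S$, which is the input to the lemma for any subsequently produced face.

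No substantive obstacle arises: the entire argument reduces to the one pullback identity above together with a bookkeeping induction on the Newton algorithm. The only care needed is to check that the privileged shape $x^{\nu-1}\,dx\wedge dy$ is preserved (up to units) by every Newton map --- which is exactly what the identity displays --- and to track which $\nu$ is current at each stage.
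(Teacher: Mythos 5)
Your pullback identity is correct: for $\sg_{(p,q,\mu)}$ one indeed has $\sg_{(p,q,\mu)}^*\bigl(x^{\nu-1}dx\wedge dy\bigr)=p\,\mu^{q'\nu}\,x_1^{p\nu+q-1}\,dx_1\wedge dy_1$, and this is exactly the bookkeeping the paper itself performs in Section 3 (where $\og_1=x_1^{p\nu+q-1}dx_1\wedge dy_1$). So your induction over the stages of the algorithm reconstructs the computational core of the result; note that the paper's own proof of the lemma is only a citation of \cite[Proposition 3.9]{CNL}, so a direct argument like yours is a legitimate, more self-contained route --- provided the bridge between the Newton-map coordinates and the geometry of $\bar X$ is actually established.

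That bridge is where your proof has a genuine gap. You assert that ``the local toric chart realizing $F_S$ in $\bar X$ is precisely $\sg_{(p_S,q_S,\mu)}$, in which $F_S$ is the divisor $\{x_1=0\}$''. As stated this is not true: the Newton map is generically finite of degree $p_S$ (a generic $x$ has $p_S$ preimages $x_1$), so it is not a chart of $\bar X$, and it is only defined as a map of formal neighbourhoods, not as part of the construction of $\bar\pi$. Because of this, reading off $n_S-1$ as the exponent of $x_1$ in the pulled-back form is not automatic: if the composition of Newton maps were ramified along $\{x_1=0\}$ with index $e>1$, that exponent would be $e(n_S-1)+(e-1)$, and if the valuation $\ord_{x_1}$ computed through the Newton maps were a proper multiple of $\ord_{F_S}$, the identification would also fail. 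What needs to be proved is that the (composed) Newton map is a local isomorphism from a neighbourhood of the origin in the $(x_1,y_1)$-coordinates onto a neighbourhood of the point of $F_S$ singled out by the root $\mu$ --- equivalently, that $\ord_{F_S}$ is the monomial valuation with weights $(p_S,q_S)$ in the coordinates at that stage, with multiplicity one, and that the map is unramified along $\{x_1=0\}$. This can be checked, for instance, by factoring $\sg_{(p,q,\mu)}$ through the toric chart of the ray $(p,q)$ and verifying that the Jacobian of the factored map is a unit near the origin; but as written you are assuming precisely the geometric input that the paper delegates to \cite[Proposition 3.9]{CNL} and to Theorem \ref{principalization}, so this step must be supplied rather than asserted.
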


\begin{proof}
This follows immediately from \cite[Proposition 3.9]{CNL}.
\end{proof}

\begin{prop}
The log canonical threshold of a non-trivial ideal $\Ic$ in $\bc
[[x,y]]$ is the minimum of the numbers $\frac{p_S\nu+q_S}{N_S}$,
where $S$ runs over all
 faces, with equation $p_S\ag+q_S\bg=N_S$,
 of all the Newton diagrams appearing in the Newton algorithm of $\Ic$,
 and where $x^{\nu-1}dx \wedge dy$ is
 the differential form at that stage of the Newton algorithm.
 \end{prop}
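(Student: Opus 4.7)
The strategy is to assemble the four ingredients gathered just before the statement. By Remark~\ref{lctVV} the log canonical threshold equals the minimum of $n_\ell/N_\ell$ taken only over the components of $\bar D$, i.e., over $\ell\in I\cup J$. Theorem~\ref{principalization}(1) puts these components in bijection with the vertices and (non-$(0)$) arrowheads of a minimal Newton tree $T$ of $\Ic$, which by construction of $T$ are in turn in bijection with the faces (compact and non-compact) of the Newton diagrams appearing in the Newton algorithm. Under this bijection, Theorem~\ref{principalization}(2) identifies $N_\ell$ with the vertex/arrow decoration $N_S$, and Lemma~\ref{differentialformintree} identifies $n_\ell$ with $p_S\nu+q_S$, where $x^{\nu-1}dx\wedge dy$ is the differential form at the relevant stage of the algorithm. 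Plugging these two identifications into
\[
lct(\Ic)=\min_{\ell\in I\cup J}\frac{n_\ell}{N_\ell}
\]
immediately yields the formula in the statement.

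The only point that needs a little attention is that the minimum in Remark~\ref{lctVV} ranges over \emph{all} components of $\bar D$, including the strict transforms $C_i$ corresponding to arrowheads. I would verify that the formula $n_S=p_S\nu+q_S$ of Lemma~\ref{differentialformintree} extends to these non-compact boundary faces of the Newton diagrams (for example, $\alpha=\alpha_0$ gives $p_S=1,\ q_S=0$ and $n_S=\nu$, which agrees with the order of $\pi^*(dx\wedge dy)$ along the strict transform of $\{x=0\}$; symmetrically for $\beta=\beta_m$). Similarly, $N_\ell=N_S$ in these cases reduces to the defining decoration of the arrowhead. Once this is recorded, the argument is just a direct substitution.

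The main (and essentially only) obstacle is bookkeeping, since all the substantive work has already been done: the geometric identification of $T$ with the minimal log principalization in Theorem~\ref{principalization}, the computation of $n_\ell$ in Lemma~\ref{differentialformintree}, and the reduction to components of $\bar D$ in Remark~\ref{lctVV}. No further computation is needed; the proof is a one-line combination of these three results.
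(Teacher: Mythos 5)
Your proposal is correct and follows essentially the same route as the paper, whose proof is precisely the one-line combination of Remark \ref{lctVV}, the correspondence of Theorem \ref{principalization} between components of $\bar D$ and vertices/arrowheads (hence faces of the Newton diagrams), and Lemma \ref{differentialformintree}. Your extra check on the non-compact faces and arrowheads is a reasonable piece of bookkeeping that the paper leaves implicit.
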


\begin{proof}
Combine Remark \ref{lctVV} and Lemma \ref{differentialformintree}.
 \end{proof}

\begin{theorem}\label{lct}
 Let $\Ic$ be a non-trivial ideal in $\bc[[x,y]]$. Then there exists a change of coordinates
 $\phi: (x,y)\mapsto (x',y')$ such that $lct(\Ic)=\frac{p+q}{N}$ where $S$, with equation $p\ag+q\bg=N$,
 is the face of the first Newton diagram of the ideal $\Ic'=\phi(\Ic)$ in $\bc[[x',y']]$, intersected by the line $\ag=\bg$.
 \end{theorem}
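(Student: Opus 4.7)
The plan is to combine the description of $lct(\Ic)$ as the minimum over faces of the Newton algorithm of $(p_S\nu_S+q_S)/N_S$ (the preceding proposition) with the convexity principle that at the first Newton polygon, the face cut by the diagonal $\ag=\bg$ is precisely the one minimizing $(p+q)/N$. The proof reduces to finding a coordinate system in which the optimal face of the Newton algorithm already lies at level zero.

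First I would verify the first-Newton-polygon claim. Suppose the diagonal meets $\nc(\Ic')$ at a point $(t_0,t_0)$, lying on a face $S_0$ with equation $p\ag+q\bg=N$, so that $(p+q)t_0=N$ and the contribution of $S_0$ to $lct$ is $1/t_0$. For any other face $S'$ of the first Newton polygon with equation $p'\ag+q'\bg=N'$, convexity of $\Dg(\Ic')$ gives $p't_0+q't_0\geq N'$, whence $(p'+q')/N'\geq 1/t_0$. Similarly, if the top (resp.\ bottom) arrow carries a positive decoration $\ag_0$ (resp.\ $\bg_m$), then the inclusion $(t_0,t_0)\in\Dg(\Ic')$ forces $t_0\geq\ag_0$ (resp.\ $t_0\geq\bg_m$), so the arrow contribution $1/\ag_0$ (resp.\ $1/\bg_m$) is at least $1/t_0$. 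Hence among all level-zero contributions the minimum is $1/t_0$, attained at $S_0$.

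Next I would lift the optimal face to level zero by a coordinate change. Let $v^*$ be a face of some Newton diagram occurring in the Newton algorithm of $\Ic$ at which $lct(\Ic)$ is attained. If $v^*$ is already a face of the first Newton polygon, the previous step gives $lct(\Ic)\geq 1/t_0$; combined with the trivial inequality $lct(\Ic)\leq 1/t_0$ (since $1/t_0$ is itself a level-zero contribution), we are done. Otherwise, $v^*$ arises only after a chain of Newton maps, and one constructs a coordinate change absorbing this chain, so that in the new coordinates the corresponding face of the transformed ideal sits on its first Newton polygon. Proposition \ref{exchange} provides this absorption in one step for vertices on the first vertical line decorated with $(q,1)$ or $(1,p)$; iterating, possibly combined with a Puiseux-style substitution $y\mapsto y-h(x)$ with $h\in x\bc[[x]]$ to handle infinite chains (as in the proof of Kuwata's theorem \cite{K} in the principal case), one reaches a coordinate system in which $v^*$ lies at level zero.

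The main obstacle is this absorption step: producing the explicit coordinate change and verifying that the decorations of the resulting Newton tree transform as expected, so that the lifted $v^*$ is indeed a face of the first Newton polygon of $\Ic'$ with contribution $(p+q)/N=lct(\Ic)$. Once this is in place, the convexity argument of the first step shows that the face cut by the diagonal in the new coordinates has the same ratio $(p+q)/N$ as the (possibly different) optimal face $v^*$, concluding the proof.
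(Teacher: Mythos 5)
Your first step (the convexity argument identifying the diagonal-cut face as the minimizer among the faces of the first Newton diagram) is correct and matches the paper's preliminary lemma, and your overall strategy -- pick a face $v^*$ realizing the minimum invariant and move it onto the first Newton polygon by coordinate changes corresponding to exchanges of vertical edges (Proposition \ref{exchange}) -- is exactly the paper's strategy. But the step you flag as ``the main obstacle'' is not a technical verification you have deferred; it is the actual mathematical content of the theorem, and your proposal contains no argument for it. Proposition \ref{exchange} only applies to a vertex whose nearby decorations are $(q,1)$ or $(1,p)$, i.e.\ with one edge decoration equal to $1$. For an arbitrary vertex $v_0$ on the first vertical line at the head of the chain leading to $v^*$, there is no reason for $p_0$ or $q_0$ to be $1$, and when both exceed $1$ no exchange (and no coordinate change at all) can pull the deeper faces up to the first Newton polygon. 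A Puiseux-type substitution $y\mapsto y-h(x)$ does not help here either: such substitutions only absorb chains of faces with $p=1$, so invoking them does not address the case of decorations both $\geq 2$.

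What closes this gap in the paper is a quantitative analysis of how the invariant $\tfrac{p\nu+q}{N}$ evolves under Newton maps, exploiting the minimality of the invariant at $v^*$ (chosen so that no preceding vertex attains the minimum). One shows: if $n_0m_0<N_0$ at a face $S_0$ (with $m_0$ the multiplicity of the chosen root of the face polynomial), then every face downstream has invariant $\geq n_0/N_0$ and the condition $nm<N$ persists (Corollary \ref{nm<N} and Lemma \ref{nm<Nbis}), which would contradict the strict minimality at $v^*$; hence $n_0m_0\geq N_0$ must hold along the chain, and an arithmetic computation using $N_0=ap_0+bq_0+p_0q_0(m_0+\sum_{\mu\neq\mu_0}m_\mu+d)$ then forces $p_0$ or $q_0$ to equal $1$ (Lemma \ref{nm>N}). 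Only after this is established can the exchange of vertical edge be performed, and one iterates the argument vertex by vertex down the chain. Since your proposal neither states nor proves any form of this ``decoration $1$'' claim, the lifting step would fail as written, and the proof is incomplete at its central point.
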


To prove the theorem, we need a series of lemmas. The first one is
 quite obvious.

\begin{lemma}
 Let $\mathcal{N}$ be a Newton diagram whose faces have equation $p_i\alpha +q_i \bg=N_i$ with
 $(p_i,q_i)=1$ for $i=0,\cdots, m+1$ and $p_0=1,q_0=0$, $p_{m+1}=0,q_{m+1}=1$. Let $\nu$ be a positive integer.
 Then $\min_{0\leq i \leq m+1} \frac{p_i\nu+q_i}{N_i} = \frac{1}{t}$, where $t$ is the ordinate of the point of intersection of the line $\ag=\nu\bg$ with the Newton diagram.
 \end{lemma}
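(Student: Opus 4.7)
The approach is purely geometric/convex: each ratio $\frac{p_i\nu+q_i}{N_i}$ is essentially the reciprocal of the $\beta$-coordinate at which the line $\alpha=\nu\beta$ meets the supporting line of the $i$-th face, and the minimum ratio corresponds to the actual crossing point with the boundary of $\mathcal{N}$.

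First I would observe that the line $\alpha=\nu\beta$ meets the supporting line $p_i\alpha+q_i\beta=N_i$ at the unique point
$$\Bigl(\frac{\nu N_i}{p_i\nu+q_i},\ \frac{N_i}{p_i\nu+q_i}\Bigr),$$
so the $\beta$-coordinate of this intersection is $t_i:=N_i/(p_i\nu+q_i)$. In particular the statement to prove is equivalent to: the ordinate $t$ of the intersection of $\alpha=\nu\beta$ with $\mathcal{N}$ satisfies $t=\max_i t_i$.

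Next I would use convexity. Since $\mathcal{N}$ is the boundary of a Newton diagram $\Delta$, and each supporting line bounds $\Delta$ from below, we have $\Delta\subseteq\{p_i\alpha+q_i\beta\geq N_i\}$ for every $i$ (including the non-compact faces $i=0$ and $i=m+1$, which correspond to $\alpha\geq N_0$ and $\beta\geq N_{m+1}$). The line $\alpha=\nu\beta$ starts at the origin (which is outside $\Delta$) and eventually enters $\Delta$; by convexity, it crosses $\partial\Delta=\mathcal{N}$ in exactly one point, call it $(\nu t,t)$. Applying the inequality $p_i(\nu t)+q_i t\geq N_i$ at this point yields $t\geq t_i$ for all $i$, i.e.\ $\frac{p_i\nu+q_i}{N_i}\geq\frac{1}{t}$.

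Finally I would show that equality is attained. The point $(\nu t,t)$ lies on $\mathcal{N}$, hence on some face $S_{i_0}$ (or on a vertex, which lies on two adjacent faces); for that face $p_{i_0}(\nu t)+q_{i_0}t=N_{i_0}$, giving $\frac{p_{i_0}\nu+q_{i_0}}{N_{i_0}}=\frac{1}{t}$. This completes the proof. The only minor subtlety is to check that the crossing really occurs on a compact or non-compact face within the indexing range $0\leq i\leq m+1$ for every positive integer $\nu$; this is immediate because the ray $\{(\nu s,s):s\geq 0\}$ lies in the first quadrant and $\Delta$ is the filter generated by finitely many lattice points in $\mathbb{N}^2$, so there is no obstacle here.
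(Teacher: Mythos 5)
Your argument is correct: the paper itself gives no proof of this lemma (it is introduced with "The first one is quite obvious"), and your convexity argument --- each face's supporting line is a supporting half-plane $p_i\alpha+q_i\beta\geq N_i$ for the diagram, the ray $\alpha=\nu\beta$ enters the diagram at a boundary point $(\nu t,t)$ where some face gives equality --- is exactly the intended elementary justification. The only cosmetic point is the degenerate possibility $N_i=0$ for one of the non-compact faces, where the ratio should be read as $+\infty$; this does not affect the minimum and the paper ignores it as well.
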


\begin{cor}
 The minimum of the values $\frac{p\nu+q}{N}$ on the first Newton diagram is obtained by $1/t$, where $t$ is the ordinate of the point where the line $\ag=\nu \bg$ intersects the Newton diagram.
 \end{cor}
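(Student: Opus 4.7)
The Corollary is essentially a restatement of the preceding Lemma specialised to the first Newton diagram $\nc(\Ic)$ of the ideal, so my plan is simply to invoke that Lemma. First I would identify $\nc(\Ic)$ with the abstract Newton diagram appearing in the Lemma: its compact faces $S_1,\ldots,S_m$ have supporting lines $p_i\ag+q_i\bg=N_i$ with $\gcd(p_i,q_i)=1$ and $(p_i,q_i)\in(\bn^*)^2$, while the two non-compact boundary half-lines of $\Delta(\Ic)$ along the coordinate axes correspond naturally to the indices $i=0$, with $(p_0,q_0)=(1,0)$ (the vertical axis), and $i=m+1$, with $(p_{m+1},q_{m+1})=(0,1)$ (the horizontal axis). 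These are exactly the faces $i=0,\ldots,m+1$ in the hypothesis of the Lemma.

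With this identification, applying the Lemma directly gives
\[
\min_{0\le i\le m+1}\frac{p_i\nu+q_i}{N_i} \;=\; \frac{1}{t},
\]
where $t$ is the ordinate of the point at which the line $\ag=\nu\bg$ meets the Newton diagram. This is precisely the conclusion of the Corollary.

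There is no real obstacle; the only content is the essentially trivial identification of the two non-compact axis boundary faces of $\Delta(\Ic)$ with the indices $i=0$ and $i=m+1$ in the Lemma, and this is built into the definition of Newton diagram recalled in Section 2. If one wanted a self-contained geometric argument rather than citing the Lemma, one would note that a supporting line $p\ag+q\bg=N$ meets the ray $\ag=\nu\bg$ at the unique point with ordinate $N/(p\nu+q)$, and that convexity of $\Delta(\Ic)$ forces the smallest such ordinate among all boundary lines to be attained precisely on the face that the ray actually crosses the boundary of the Newton diagram; equivalently, $t=\max_i N_i/(p_i\nu+q_i)$, so $1/t=\min_i(p_i\nu+q_i)/N_i$. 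But this is exactly the content of the preceding Lemma.
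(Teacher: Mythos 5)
Your proposal is correct and matches the paper's (implicit) argument: the paper states the Corollary as an immediate specialization of the preceding Lemma to the first Newton diagram, with the non-compact vertical and horizontal faces playing the roles of $i=0$ and $i=m+1$, exactly as you do. Your concluding remark that $t=\max_i N_i/(p_i\nu+q_i)$ by convexity is just the content of that Lemma, so nothing further is needed.
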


 \begin{definition}
Let $S$ be a face, with equation $p \ag+q \bg=N$, of a Newton
diagram appearing in the Newton algorithm, with differential
$w=x^{\nu-1}dx \wedge dy$ at that stage. The rational number
$\frac{n}{N}=\frac{p\nu+q}{N}$ is called the {\em invariant of $S$}.
 \end{definition}

We are looking for the smallest invariant appearing in the Newton
process.

  \begin{cor}\label{nm<N}
Let $S_0$ be a face of a Newton polygon where the invariant is
$\frac{n_0}{N_0}$. Let $\mu_0$ be a root of its face polynomial with
multiplicity $m_0$. Assume $m_0n_0<N_0$. Consider the Newton map
associated to $S_0$ and $\mu_0$. Then for all faces of the new
Newton diagram, the invariant $\frac{n}{N}$ satisfies
$\frac{n}{N}\geq \frac{n_0}{N_0}$.
  \end{cor}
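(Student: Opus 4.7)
The plan is to apply the immediately preceding corollary to the transformed ideal $\Ic_1 := \sigma_{(p_0,q_0,\mu_0)}(\Ic)$. Under the Newton map, the differential $\og = x^{\nu-1}\,dx \wedge dy$ pulls back to $\og_1 = x_1^{n_0-1}\,dx_1 \wedge dy_1$, so the new value of $\nu$ is $n_0$, and the invariant of a face $p\alpha + q\beta = M$ of $\nc(\Ic_1)$ equals $(pn_0+q)/M$. By that corollary, the minimum of $(pn_0+q)/M$ over all boundary segments of $\Delta(\Ic_1)$ (compact faces together with the vertical and horizontal half-lines) equals $1/t$, where $t$ is the ordinate at which the line $\alpha = n_0\beta$ enters $\Delta(\Ic_1)$. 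So it suffices to prove that $t = N_0/n_0$.

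The geometric heart of the argument is to locate the leftmost vertex of $\nc(\Ic_1)$ as $v'_0 = (N_0, m_0)$. From Lemma \ref{Newton-algbis}(3) we already know $\Ic_1 \subset (x_1^{N_0})$ and that the height of $\nc(\Ic_1)$ is at most $m_0$. For the exact value I will expand $\sigma_{(p_0,q_0,\mu_0)}(f_i)$ for each generator $f_i$ of $\Ic$ and isolate its $x_1^{N_0}$-coefficient, which comes entirely from the face-initial $\ti(f_i, S_0)$ since monomials off $S_0$ map under $\sigma$ to powers of $x_1$ strictly exceeding $N_0$. Using $p_0 p' - q_0 q' = 1$, each linear factor $(y^{p_0} - \mu_j x^{q_0})$ of $F_{\Ic, S_0}(x^{q_0}, y^{p_0})$ pulls back to $x_1^{p_0 q_0}\bigl[(y_1+\mu_0^{p'})^{p_0} - \mu_j \mu_0^{q' q_0}\bigr]$, whose value at $y_1 = 0$ is a nonzero constant multiple of $\mu_0 - \mu_j$; hence precisely the $m_0$ factors with $\mu_j = \mu_0$ vanish at $y_1 = 0$, each to first order, contributing a total divisor $y_1^{m_0}$ to the $x_1^{N_0}$-coefficient of every $\sigma(f_i)$. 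This divisibility passes to all $\bc[[x_1, y_1]]$-linear combinations (only $h_i(0, y_1)$ enters the $x_1^{N_0}$-part of $\sum h_i \sigma(f_i)$), forcing $\beta'_0 \geq m_0$. For the reverse inequality, in the principal case \eqref{initialideal1} the same computation produces an element with nonzero $x_1^{N_0}y_1^{m_0}$-coefficient, and in the non-principal case \eqref{initialideal2} this follows from the pairwise coprimality of $k_1,\dots,k_s$, which forces $k_j(1, \mu_0) \neq 0$ for at least one $j$.

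Once $v'_0 = (N_0, m_0)$ is established, the hypothesis $m_0 n_0 < N_0$ rewrites as $m_0 < N_0/n_0$; therefore at height $\beta = m_0$ the line $\alpha = n_0 \beta$ still lies strictly to the left of the vertical boundary $\alpha = N_0$ of $\Delta(\Ic_1)$, and so first meets the diagram on the vertical half-line above $v'_0$, namely at $(N_0,\, N_0/n_0)$. Hence $t = N_0/n_0$ and the corollary yields $1/t = n_0/N_0$ as the minimum invariant; in particular every compact face of $\nc(\Ic_1)$ has invariant at least $n_0/N_0$, as required. The main obstacle is the coefficient-level calculation in the second paragraph: one must extract both the $y_1^{m_0}$-divisibility and its sharpness uniformly across the formulas \eqref{initialideal1} and \eqref{initialideal2}, and confirm that the off-face contributions really cannot lower the $x_1$-order below $N_0$; the subsequent reduction to the line/diagram geometry is then routine.
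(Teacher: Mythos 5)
Your argument is correct and takes essentially the same route as the paper's proof: reduce to the preceding lemma with the new differential exponent $\nu=n_0$ (since $\og$ pulls back to $x_1^{n_0-1}dx_1\wedge dy_1$), and observe that the transformed diagram has vertical face $\ag=N_0$ with the highest vertex of its polygon at ordinate at most $m_0$, so by $m_0n_0<N_0$ the line $\ag=n_0\bg$ enters the diagram through that vertical face at ordinate $N_0/n_0$. The only difference is that your coefficient-level computation re-derives this description of the transformed diagram, which the paper simply takes from Lemma \ref{Newton-algbis}(3) quoted from \cite{CV}.
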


  \begin{proof}
After applying the Newton map $\sigma_{(p,q,\mu_0)}$ associated to
$S_0$ and $\mu_0$, we have a Newton diagram with a non-compact face
$\ag=N_0$, and the highest point on the Newton polygon has
coordinates $(N_0,m_0)$. The hypothesis $m_0n_0<N_0$ says that the
line $\ag=n_0\bg$ intersects the face $\ag=N_0$ (see Figure 20).
Then the minimum of the values $\frac{pn_0+q}{N}$ on all the faces
of the new diagram is $\frac{n_0}{N_0}$. But the differential at
this stage of the Newton process is $w=x^{n_0-1}dx\wedge dy$.

     \begin{figure}[ht]
 \begin{center}
\includegraphics{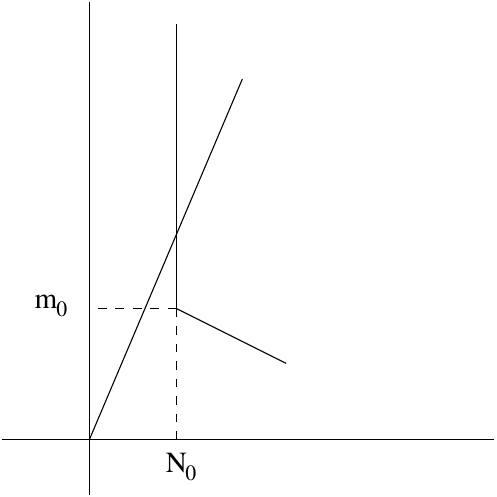}
\caption{}
 \end{center}
 \end{figure}

 \end{proof}

 \begin{lemma}\label{nm<Nbis}
Let $S_0$ be a face of a Newton polygon where the invariant is
$\frac{n_0}{N_0}$. Let $\mu_0$ be a root of its face polynomial with
multiplicity $m_0$. Consider the Newton map associated to $S_0$ and
$\mu_0$. Let $S$ be a face of the new Newton polygon with invariant
$\frac{n}{N}$, and let $\mu$ be a root of its face polynomial with
multiplicity $m$. If $n_0m_0<N_0$, then $nm<N$.
 \end{lemma}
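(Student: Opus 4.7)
My plan is to reduce the inequality $nm<N$ to two elementary bounds on the upper endpoint $(a_1,b_1)$ of the face $S$ of the new Newton polygon, namely $a_1\geq N_0$ and $b_1\leq m_0$, and then combine these with the hypothesis $n_0m_0<N_0$ arithmetically.

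First, I would record the form of the new differential. Applying the Newton map $\sigma_{(p_0,q_0,\mu_0)}$ to $\omega=x^{\nu-1}dx\wedge dy$, Lemma \ref{differentialformintree} (or a direct calculation) shows that the differential form at the next stage of the Newton algorithm is $\omega_1=x_1^{n_0-1}dx_1\wedge dy_1$. Hence if $S$ has equation $p\alpha+q\beta=N$, its invariant is $n/N=(pn_0+q)/N$.

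Next I would analyse the shape of the new Newton polygon. By Lemma \ref{Newton-algbis}(3) we have $\Ic_\sigma=(x_1^{N_0})\Ic_\sigma'$ with $\Ic_\sigma'$ not divisible by $x_1$ and with Newton polygon of height at most $m_0$. This gives two structural facts: every vertex $(a,b)$ of $\nc(\Ic_\sigma)$ satisfies $a\geq N_0$ (its leftmost vertex lies on $\alpha=N_0$) and $b\leq m_0$ (since the leftmost vertex, which is also the topmost one, has $\beta$-coordinate bounded by the height of $\Ic_\sigma'$). In particular the upper endpoint $(a_1,b_1)$ of $S$ satisfies
\begin{equation*}
a_1\geq N_0\qquad\text{and}\qquad b_1\leq m_0.
\end{equation*}

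The third ingredient is a bound on the multiplicity $m$. If $(a_1,b_1)$ and $(a_2,b_2)$, with $b_1>b_2$, are the endpoints of $S$, then $b_1-b_2=pk$ where $k$ is the degree of the face polynomial $F_S$ (written as a homogeneous polynomial in its two arguments, which is forced by the form \eqref{initialideal1}/\eqref{initialideal2}). Since $\mu$ is a single root of multiplicity $m$ of $F_S$, we have $m\leq k=(b_1-b_2)/p\leq b_1/p$, so in particular $pm\leq b_1\leq m_0$, whence
\begin{equation*}
m\leq b_1\qquad\text{and}\qquad m\leq m_0.
\end{equation*}

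Finally I would combine everything. Write $nm=(pn_0+q)m=pn_0m+qm$ and $N=pa_1+qb_1$. From $m\leq m_0$ and the hypothesis $n_0m_0<N_0\leq a_1$ we get $pn_0m\leq pn_0m_0<pN_0\leq pa_1$, a \emph{strict} inequality. From $m\leq b_1$ and $q\geq 1$ we get $qm\leq qb_1$. Adding these,
\begin{equation*}
nm=pn_0m+qm<pa_1+qb_1=N,
\end{equation*}
which is exactly the desired conclusion. The only potential obstacle is making the bound $a_1\geq N_0$ fully rigorous in the case where $\Ic_\sigma$ fails to be contained in $(x_1,y_1)$; this is handled by the standing assumption that the original $\Ic$ lies in the maximal ideal, which is preserved by Newton maps and by multiplication by $x_1^{N_0}$.
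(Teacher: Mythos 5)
Your overall route is the paper's route: bound the multiplicity $m$ of a root of the face polynomial of $S$ by $m_0$, use that the new Newton diagram lies in the half-plane $\ag\geq N_0$ with its top vertex controlled by $m_0$, and combine with $n_0m_0<N_0$; you anchor the final computation at the upper endpoint $(a_1,b_1)$ of $S$ (so $N=pa_1+qb_1$), while the paper anchors it at the point $(N_0,m')$ where the supporting line of $S$ meets $\ag=N_0$ (so $N=pN_0+qm'$), which is the same arithmetic. The identification $\og_1=x_1^{n_0-1}dx_1\wedge dy_1$, the bound $a_1\geq N_0$, and the bound $pm\leq b_1$ are fine (for the latter, note that in the dicritical case $b_1-b_2=p(\deg F_S+d_S)$ rather than $p\deg F_S$, which only strengthens your inequality), and your closing worry about $a_1\geq N_0$ is a non-issue since $\Ic_\sg\subset(x_1^{N_0})$.

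The one step whose justification does not hold as written is $b_1\leq m_0$. Lemma \ref{Newton-algbis}(3) bounds the \emph{height} of the new Newton polygon, i.e.\ the difference between the ordinates of its top and bottom vertices; it does not bound the top ordinate itself, because $\Ic'_\sg$ may be divisible by $y_1$. For instance, take $\Ic=(y^3(y-x)^2,\,x^{10}(y-x))$, the face $\ag+\bg=5$, $\mu_0=1$, $m_0=2$, $\nu=1$ (so $n_0m_0=4<5=N_0$): then $\Ic_\sg=x_1^5y_1\,(y_1(y_1+1)^3,\,x_1^6)$, whose Newton polygon has height $1$ but top vertex $(5,2)$, so your inference \lq\lq top ordinate $\leq$ height $\leq m_0$\rq\rq\ fails even though the conclusion $b_1\leq m_0$ still holds. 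The fact you need, and the one the paper uses in Corollary \ref{nm<N}, is that the leftmost (hence highest) vertex of the new diagram is exactly $(N_0,m_0)$: a generator whose initial form along $S_0$ contains the factor $(y^{p_0}-\mu_0x^{q_0})$ with multiplicity exactly $m_0$ exists by the gcd/coprimality structure in (\ref{initialideal1})--(\ref{initialideal2}), and its transform is $x_1^{N_0}\bigl(c\,y_1^{m_0}+\cdots\bigr)$ with $c\neq0$, where the omitted terms have higher order in $y_1$ or are divisible by $x_1$, while no generator contributes support with $\ag<N_0$. With $b_1\leq m_0$ justified this way, the rest of your computation goes through verbatim.
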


\begin{proof}
Write the equation of $S$ as $p\ag+q\bg=N$, and let $m'$ be the
ordinate of the point where $S$ intersects the line $\alpha=N_0$
(Figure 21). We have $pN_0+qm'=N$ and
 $$m\leq m'\leq m_0<\frac{N_0}{n_0}.$$
 Then
 $$mn-N=m(pn_0+q)-N\leq pm_0n_0+m'q-N<pN_0+qm'-N=0.$$
\end{proof}

       \begin{figure}[ht]
 \begin{center}
\includegraphics{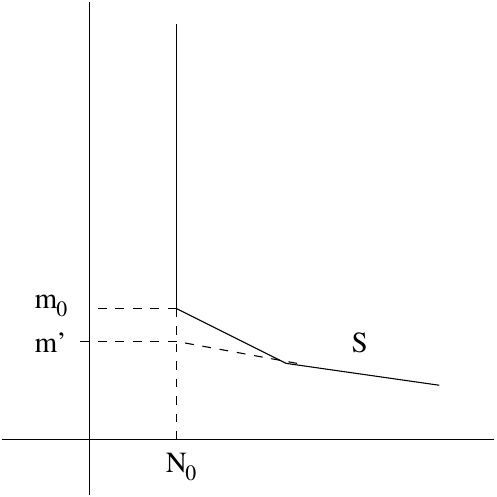}
\caption{}
 \end{center}
 \end{figure}

Next we study the case where $n_0m_0 \geq N_0$.

\begin{lemma}\label{nm>N}
Let  $S_0$ be a face of a Newton polygon with equation $p_0
\ag+q_0\bg=N_0$ and with invariant $\frac{p_0+q_0}{N_0}$. Let
$\mu_0$ be a root of its face polynomial with multiplicity $m_0$. If
$n_0m_0 \geq N_0$, then $p_0$ or $q_0$ is equal to $1$.
  \end{lemma}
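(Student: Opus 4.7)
The plan is to translate the hypothesis $n_0 m_0 \geq N_0$ (with $n_0 = p_0+q_0$) into a purely numerical inequality between $p_0$ and $q_0$, using two basic pieces of data attached to the face $S_0$: the degree of its face polynomial, and the location of its endpoints.

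First I would fix notation for the face. Write the endpoints of $S_0$ as $(\alpha_1,\beta_1)$ and $(\alpha_2,\beta_2)$ with $\alpha_1<\alpha_2$, $\beta_1>\beta_2$. Because $\gcd(p_0,q_0)=1$ and both endpoints lie on the line $p_0\alpha+q_0\beta=N_0$, there is a positive integer $d$ with $\alpha_2-\alpha_1=q_0 d$ and $\beta_1-\beta_2=p_0 d$. Recall from the setup that the face polynomial $F_{S_0}$, written as $c\prod(y-\mu_i x)^{\nu_i}$, has total degree equal to this same $d=\sum\nu_i$, since $F_{S_0}(x^{q_0},y^{p_0})$ is precisely the non-monomial part of the initial form supported on $S_0$. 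In particular the multiplicity $m_0$ of the root $\mu_0$ satisfies $m_0\leq d$.

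Next I would bound $N_0$ from below. Since $(\alpha_1,\beta_1)\in\bn^2$, we have $\alpha_1\geq 0$ and $\beta_1\geq p_0 d$ (because $\beta_2\geq 0$ and $\beta_1-\beta_2=p_0 d$), so
\[
N_0=p_0\alpha_1+q_0\beta_1\geq q_0\cdot p_0 d = p_0 q_0 d.
\]
Combining this with the hypothesis $n_0 m_0\geq N_0$ and $m_0\leq d$, together with $n_0=p_0+q_0$ (the invariant being $\frac{p_0+q_0}{N_0}$ forces $\nu=1$ at this stage of the algorithm), gives
\[
(p_0+q_0)\,d\;\geq\;(p_0+q_0)\,m_0\;=\;n_0 m_0\;\geq\;N_0\;\geq\;p_0 q_0 d.
\]
Since $d\geq 1$, this simplifies to $p_0+q_0\geq p_0 q_0$, equivalently $\frac{1}{p_0}+\frac{1}{q_0}\geq 1$.

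Finally I would rule out $p_0,q_0\geq 2$. If both are $\geq 2$, then $\frac{1}{p_0}+\frac{1}{q_0}\leq 1$ with equality only when $p_0=q_0=2$, which is forbidden by $\gcd(p_0,q_0)=1$; hence $\frac{1}{p_0}+\frac{1}{q_0}<1$, contradicting the previous step. Therefore $p_0=1$ or $q_0=1$, as claimed. There is no real obstacle here; the only slightly subtle point is observing that the total degree of $F_{S_0}$ coincides with the integer $d$ measuring the length of the face, which makes the bound $m_0\leq d$ available and immediately forces the geometry to be of \textquotedblleft boundary\textquotedblright\ type $p_0=1$ or $q_0=1$.
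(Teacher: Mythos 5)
Your proof is correct and is essentially the paper's own argument: both reduce the hypothesis to the bound $N_0\geq p_0q_0m_0$ coming from the structure of the face (the paper via the factorization of the initial ideal, which gives $N_0=ap_0+bq_0+p_0q_0(\sum_{\mu}m_{\mu}+d_{S_0})$, you via the endpoints of $S_0$ and $m_0\leq d$), and then conclude $p_0+q_0\geq p_0q_0$, which together with $\gcd(p_0,q_0)=1$ forces $p_0=1$ or $q_0=1$. The only imprecision is your claim that $\deg F_{S_0}$ equals the lattice length $d$ of the face: for a dicritical face of an ideal the initial ideal carries the extra non-principal factor $\big(k_1,\cdots,k_s\big)$ of degree $d_{S_0}\geq 1$, so in general $d=\deg F_{S_0}+d_{S_0}$; since you only use $m_0\leq \deg F_{S_0}\leq d$, this does not affect the validity of the proof.
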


\begin{proof}
We associate to the initial ideal with respect to $S_0$ the numbers
$a$, $b$ and $d$ as in (\ref{initialideal1}) or
(\ref{initialideal2}), and we denote by $m_\mu$ de multiplicity of
another root $\mu$ of the face polynomial of $S_0$. Then we have
  $$N_0=ap_0+bq_0+p_0q_0(m_0+\sum_{\mu \neq \mu_0}m_{\mu}+d).$$
Hence
  $$0\leq n_0 m_0 - N_0=(p_0+q_0)m_0-N_0 \leq (p_0+q_0)m_0-p_0q_0m_0.$$
And $p_0+q_0-p_0q_0 \geq 0$ implies that $p_0$ or $q_0$ is equal to
$1$ (since $p_0$ and $q_0$ are coprime).
  \end{proof}

\bigskip
Now we can prove Theorem \ref{lct}.

\begin{proof}
We fix a vertex (or arrow) $v$ in the Newton tree of $\Ic$ where the
minimum value of the invariant $\frac nN$ is attained, such that no
preceding vertex of $v$ has that minimum as invariant.

 If $v$ corresponds to
a face on the first Newton polygon, we know that the minimum is
given by the intersection with the line $\ag=\bg$. If it is not on
the first vertical line of the Newton tree, then we consider the
sequence of vertices which lead from the first vertical line to $v$.
 Let $v_0$ be the vertex of that sequence on
the first vertical line. We claim that $n_0 m_0 \geq N_0$. Indeed,
otherwise Lemma \ref{nm<Nbis} and Corollary \ref{nm<N} provide a
contradiction.

Hence by Lemma \ref{nm>N} we have that $p_0$ or $q_0$ is equal to
$1$. Say $p_0=1$. Let $v_1$ be the second vertex of the sequence. We
can perform the exchange of vertical edge between the edge decorated
by $p_0=1$ and the horizontal edge with extremities $v_0$ and $v_1$.
If $v_1=v$, then $v$ is now on the first Newton polygon and as its
invariant is the minimum, it is given by the intersection with the
line $\ag=\bg$. If $v_1\neq v$, we can conclude similarly that $n_1
m_1 \geq N_1$, apply Lemma \ref{nm>N} to $v_1$ and perform an
exchange of vertical edge. We continue until $v$ is on the first
vertical line. As we showed that an exchange of vertical edge
corresponds to a change of coordinates, we are done.
    \end{proof}

\bigskip

\begin{remark}
Even in very good coordinates, the log canonical threshold may
appear at the second (or later) step of the Newton algorithm. Here
is an example:
$$\Ic =((y+x)^5, x^2(y+x)^3,x^8).$$
In these coordinates, the Newton tree is as in Figure 22. One easily
verifies that $lct(\Ic)=\frac 38$, appearing at the second step.
       \begin{figure}[ht]
 \begin{center}
\includegraphics{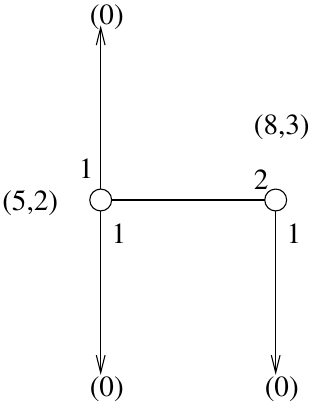}
\caption{}
 \end{center}
 \end{figure}

 \end{remark}

\begin{remark} We should note that the log canonical threshold of the generic curve of an ideal may be different from the log canonical threshold of the ideal. For example, the log canonical threshold of the ideal $\Ic =(x,y)$ is $2$, but the log canonical threshold of the curve $\{x+ay=0\}$ is $1$.
\end{remark}

\bigskip
The next results of \cite{dFEM} and \cite{Co} are immediate
consequences of the previous theorem. Let $\mathcal{M}$ denote the
maximal ideal of $\bc[[x,y]]$. Also, for an ideal $\Ic$ in
$\bc[[x,y]]$ of finite codimension, we denote by $m(\Ic)$ the area
delimited by the axes and the Newton polygon of $\Ic$.

\begin{cor}\label{HSknown}  Assume that $\Ic$ is an ideal in $\bc[[x,y]]$ of finite codimension.
    Let $e(\Ic)$ be its Hilbert-Samuel multiplicity.

(1) Then
    $$e(\Ic)\geq \frac{4}{\text{lct}(\Ic)^2}.$$

(2) We have equality if and only if there is a positive integer $N$
such that the integral closure
 of $\Ic$ is equal to
$\mathcal{M}^N.$
 Moreover, in this case $N=\frac{2}{\text{lct}(\Ic)}$.
 \end{cor}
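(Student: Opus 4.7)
The plan is to apply Theorem \ref{lct} to fix a coordinate system in which $lct(\Ic) = (p+q)/N$ is attained on the face $S$ of the very first Newton polygon with supporting line $p\ag+q\bg=N$ --- namely the face crossed by the diagonal $\ag=\bg$. All subsequent reasoning takes place in those coordinates; note that $e(\Ic)$ and $lct(\Ic)$ are intrinsic invariants, and only $m(\Ic)$ depends on coordinates.

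The heart of part (1) is a three-link chain of inequalities. First, since $\Ic$ is contained in the monomial ideal $\Ic_{\Delta}$ generated by all $x^ay^b$ with $(a,b)\in\Delta(\Ic)$, and the Hilbert--Samuel multiplicity of such a monomial ideal is the classical $2m(\Ic)$, monotonicity of $e$ on $\mathcal{M}$-primary ideals yields $e(\Ic)\geq e(\Ic_{\Delta}) = 2m(\Ic)$. Second, the Newton polygon lies weakly above the supporting line of $S$, so the region bounded by the axes and the polygon contains the right triangle with legs $N/p$ and $N/q$ cut off by that line; hence $m(\Ic) \geq N^2/(2pq)$. Third, AM--GM gives $(p+q)^2\geq 4pq$. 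Concatenating,
$$e(\Ic)\;\geq\; 2m(\Ic)\;\geq\; \frac{N^2}{pq}\;\geq\;\frac{4N^2}{(p+q)^2}\;=\;\frac{4}{lct(\Ic)^2}.$$

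For part (2), I would track when each link is an equality. Equality in AM--GM forces $p=q$, hence $p=q=1$ by coprimeness; equality in the triangle estimate then forces the Newton polygon to coincide with the segment from $(N,0)$ to $(0,N)$, so $\Ic_{\Delta}=\mathcal{M}^N$; equality in the first link, via Rees's equimultiplicity theorem applied to $\Ic\subseteq\Ic_\Delta$, forces $\overline{\Ic}=\overline{\Ic_\Delta}=\mathcal{M}^N$ (the last equality because $\mathcal{M}^N$ is integrally closed). Conversely, if $\overline{\Ic}=\mathcal{M}^N$ then $e(\Ic)=N^2$ and $lct(\Ic)=2/N$, so $e(\Ic)=4/lct(\Ic)^2$ and $N = 2/lct(\Ic)$.

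The only non-routine ingredients are the formula $e(\Ic_\Delta)=2m(\Ic)$ for a monomial $\mathcal{M}$-primary ideal and Rees's equimultiplicity theorem; both are classical in dimension two and should simply be cited (the first appears already in \cite{CV}). Thus the genuinely new content is the convex-geometric observation that, once the good coordinates from Theorem \ref{lct} are in place, $1/lct(\Ic)^2$ is exactly $1/4$ of the squared reciprocal of a slope whose associated triangle sits inside the region below the Newton polygon --- with AM--GM absorbing the factor $4pq/(p+q)^2$. I expect no serious obstacle beyond verifying the equality chain cleanly.
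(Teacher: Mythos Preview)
Your proof is correct and follows essentially the same route as the paper: choose coordinates via Theorem~\ref{lct}, then chain $e(\Ic)\geq 2m(\Ic)\geq N^2/(pq)\geq 4N^2/(p+q)^2=4/\mathrm{lct}(\Ic)^2$. The paper compresses your triangle-plus-AM--GM step into the single line ``one easily verifies that $2/\mathrm{lct}(\Ic)^2\leq m(\Ic)$'', and for the equality case it invokes the notion of \emph{non-degeneracy} from \cite{CV} (equivalent to $e(\Ic)=2m(\Ic)$) rather than Rees's theorem, but these are the same facts in different clothing.
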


 \begin{proof}
(1) We have proven that there is an ideal $\Ic'$ isomorphic to $\Ic$
such that the log canonical threshold of $\Ic'$ is the ordinate of
the intersection of the line $\ag=\bg$ with the Newton polygon of
$\Ic'$. Then the Hilbert-Samuel multiplicity of $\Ic'$ is the same
as the Hilbert-Samuel multiplicity of
 $\Ic$. Hence we can assume that $\Ic=\Ic'$.
It is known \cite{CV}  that $e(\Ic)\geq 2m(\Ic)$.  One easily verifies that
$2\frac{1}{\text{lct}(\Ic)^2}\leq m(\Ic)$.

(2) We just derived that equality occurs if and only if
$$e(\Ic)=2m(\Ic)=\frac{4}{\text{lct}(\Ic)^2}.$$
The first equality occurs if and only if $\Ic$ is non-degenerate, the
second one if and only if the Newton polygon of $\Ic$ has exactly
one compact face, with equation of the form $\alpha+\beta=N$. Taking
into account the results of \cite{CV} these statements mean that the
integral closure of $\Ic$ is a power of $\mathcal{M}$.
 \end{proof}

\bigskip
We can give a stronger inequality using the exact computation of the
Hilbert-Samuel multiplicity of an ideal \cite{CV}. There we computed
$e(\Ic)$ using the area of the regions associated with the
successive Newton polygons that appear in the Newton algorithm. If
$\Sg_i=(\sg_1,\cdots, \sg_i)$ is a sequence of Newton maps, then
$\Ic_{\Sg_i}$ is of the form $\Ic_{\Sg_i}=x^{N_i}\Ic'_{\Sg_i}$,
where $\Ic'_{\Sg_i}$ is of finite codimension

\begin{theorem}\label{HSmult}\cite[Theorem 5.18]{CV} Let $\Ic$ be a non-trivial ideal in $\bc[[x,y]]$ of finite codimension  and of depth $d$. Then
$$e(\Ic)=2\, \left(m(\Ic)+\sum_{i=1} ^{d} \sum_{\Sg_i}m(\Ic'_{\Sg_i})\right) ,$$
the second summation being taken over all possible sequences of Newton maps of length $i$.
\end{theorem}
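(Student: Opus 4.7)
The plan is to prove the formula by strong induction on the depth $d(\Ic)$. The heart of the argument is a one-step recursion
\[
e(\Ic) \;=\; 2\, m(\Ic) \;+\; \sum_{\sigma} e(\Ic'_\sigma),
\]
where $\sigma$ ranges over the Newton maps $\sg_{(p_S,q_S,\mu)}$ attached to the faces $S$ of $\nc(\Ic)$ and the roots $\mu$ of the corresponding face polynomials $F_{\Ic,S}$. Once this is established, iterating and applying the induction hypothesis to each $\Ic'_\sigma$ (which has strictly smaller depth, since $\Ic_\sigma = x^{N_\sigma}\Ic'_\sigma$ lies one step deeper in the algorithm) produces the telescoping sum $m(\Ic) + \sum_{i}\sum_{\Sg_i} m(\Ic'_{\Sg_i})$.

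For the base case, when $d(\Ic) = 1$ every Newton map sends $\Ic$ to an ideal $\Ic_\sigma = x^{N_\sigma}(y + h(x))^{\nu_\sigma}$ of depth $0$, whose finite-codimension part $\Ic'_\sigma$ has a Newton polygon with no compact face; hence $m(\Ic'_\sigma) = 0$ and the statement collapses to the classical Kouchnirenko--Teissier formula $e(\Ic) = 2 m(\Ic)$ for a non-degenerate $m$-primary ideal, which I would take as an input.

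To set up the recursion I would use the geometric picture from Theorem \ref{principalization}. Let $b : \bar X \to \spec\bc[[x,y]]$ be the toroidal modification associated with the fan dual to $\nc(\Ic)$. On $\bar X$ the ideal sheaf $\Ic\,\oc_{\bar X}$ is already locally principal except at finitely many points of the exceptional divisor, and these points correspond bijectively to pairs $(S,\mu)$ with $\mu$ a root of $F_{\Ic,S}$; local coordinates at each such point are exactly those supplied by $\sg_{(p_S,q_S,\mu)}$, and in them the ideal pulls back to $\Ic_\sigma = x^{N_\sigma}\Ic'_\sigma$. Using the identity $e(\Ic) = -D^{2}$ for the pullback divisor $D$ of $\Ic$ on a log principalization $X \to \bar X \to \spec\bc[[x,y]]$, I would split the self-intersection into a globally toric part (the contribution supported on the components coming from the fan) and a sum of local parts at the singular points of $\bar X$.

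The main obstacle is making this decomposition precise. The toric part can be evaluated on $\bar X$ by the standard intersection formulas for toric surfaces, and should reproduce $2 m(\Ic)$ as in the monomial/non-degenerate case. The genuinely delicate step is showing that the local contribution at the point attached to $(S,\mu)$ equals the full $e(\Ic'_\sigma)$: one must check that the monomial factor $x^{N_\sigma}$ in $\Ic_\sigma$ only adds to the already-accounted toric part, so that the leftover Hilbert--Samuel mass localized at that point is exactly $e(\Ic'_\sigma)$. Intersection numbers on further resolutions mix contributions from curves meeting the exceptional locus of $b$ with those of the new exceptional components, and one has to verify that these cross terms cancel against the boundary contributions of the shifted Newton polygons, leaving just the recursion above.
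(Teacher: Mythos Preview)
This theorem is not proved in the present paper at all: it is quoted verbatim from the companion paper \cite{CV} (their Theorem~5.18), and used here as input to Theorem~\ref{HSinequality}. So there is no ``paper's own proof'' to compare your proposal against. What the paper \emph{does} record is precisely the one-step recursion you isolate,
\[
e(\Ic)\;=\;2\,m(\Ic)\;+\;\sum_{\sigma} e(\Ic'_{\sigma}),
\]
which it states (in the proof of Theorem~\ref{HSinequality}) as an equivalent reformulation of the cited result. Your plan to prove the formula by induction on the depth, reducing everything to this recursion, is therefore exactly the right shape.

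Where your proposal is still a sketch is in the justification of the recursion itself. Invoking $e(\Ic)=-D^{2}$ on a log principalization and then trying to separate a ``toric'' piece equal to $2\,m(\Ic)$ from local pieces equal to the $e(\Ic'_{\sigma})$ is a reasonable strategy, but, as you yourself flag, the cross terms between the exceptional curves of the toric modification and the further exceptional curves over each point $(S,\mu)$ do not vanish a priori; one has to check that the factor $x^{N_S}$ in $\Ic_{\sigma}=x^{N_S}\Ic'_{\sigma}$ exactly accounts for them. This is the genuine computation, and your write-up stops short of it. A cleaner route (and, as far as one can infer, closer to what is done in \cite{CV}) is to stay on the algebraic/combinatorial side: use that $e(\Ic)$ depends only on the integral closure $\bar\Ic$, that $\bar\Ic$ is determined by the Newton data along the algorithm, and compute the multiplicity directly from the Newton polygons via a length or colength argument, avoiding intersection theory on a full resolution. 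Either way, the inductive skeleton you set up is correct; what remains is to actually carry out the local computation that isolates $e(\Ic'_{\sigma})$.
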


\begin{theorem}\label{HSinequality} Let $\Ic$ be a non-trivial ideal in $\bc[[x,y]]$ of finite codimension. Consider a system of coordinates such that the log canonical threshold is given by its Newton diagram. Then
$$e(\Ic)\geq 4\Big(\frac{1}{\text{lct}(\Ic)^2}+\sum_{\sg}\frac{1}{\text{lct}(\Ic'_{\sg})^2}\Big)$$
where the summation is taken over all Newton maps associated to the faces of the Newton diagram and the roots of the face polynomials.
 \end{theorem}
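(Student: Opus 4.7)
The plan is to reorganize the expression for $e(\Ic)$ given by Theorem \ref{HSmult} so that the first Newton map becomes explicit, then apply Theorem \ref{HSmult} again to each $\Ic'_\sg$ to identify the tail sums, and finally invoke Corollary \ref{HSknown}(1) coordinate-freely on each $\Ic'_\sg$ while using the coordinate hypothesis only at the top level on $\Ic$. Concretely I will expand
$$\frac{e(\Ic)}{2} = m(\Ic) + \sum_{i\geq 1}\sum_{\Sg_i} m(\Ic'_{\Sg_i})$$
via Theorem \ref{HSmult}, and group the right-hand double sum according to the first Newton map $\sg = \sg_1$ of the sequence.

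The key compatibility is the identity
$$\Ic'_{(\sg, \sg_2, \dots, \sg_i)} = (\Ic'_\sg)'_{(\sg_2, \dots, \sg_i)},$$
which I will prove by induction on $i$. Since any Newton map $\tau$ sends $x$ to a nonzero scalar multiple of a pure power $x_1^p$, applying $\tau$ to a factorization $\Ic_{\Sg} = x^{N}\Ic'_{\Sg}$ yields $\tau(\Ic_{\Sg}) = c\cdot x_1^{pN}(\Ic'_{\Sg})_\tau$, so the maximal $x_1$-power that can be stripped from $\Ic_{(\Sg,\tau)}$ equals $pN$ plus the one stripped from $(\Ic'_{\Sg})_\tau$, and the residual factor is precisely $(\Ic'_{\Sg})'_\tau$. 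With this identity, the inner sum over sequences starting with a fixed $\sg$ equals, by Theorem \ref{HSmult} applied now to $\Ic'_\sg$, exactly $e(\Ic'_\sg)/2$, giving
$$e(\Ic) = 2m(\Ic) + \sum_\sg e(\Ic'_\sg).$$

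To finish, I bound each summand. Corollary \ref{HSknown}(1), applied to each $\Ic'_\sg$ (it is coordinate-invariant), yields $e(\Ic'_\sg) \geq 4/\text{lct}(\Ic'_\sg)^2$. For the leading term, the coordinate hypothesis together with Theorem \ref{lct} gives $\text{lct}(\Ic) = (p+q)/N$ for the face $p\ag + q\bg = N$ intersected by the diagonal $\ag = \bg$; by convexity, the triangle cut off by this supporting line and the coordinate axes lies below the Newton polygon, has area $N^2/(2pq)$, and AM-GM in the form $pq \leq (p+q)^2/4$ yields $m(\Ic) \geq 2/\text{lct}(\Ic)^2$. Summing produces the asserted inequality. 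The main obstacle I anticipate is the stripping identity: one must take some care to track how the scalar factor produced by $\tau$ and the $x_1^{pN}$ prefactor interact with the generators of $\Ic'_{\Sg}$, rather than only with the convenient form $x^N\cdot(\text{generator})$. Once this is in place, the remainder of the argument is pure assembly.
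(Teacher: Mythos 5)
Your proposal is correct and follows essentially the same route as the paper: the paper likewise rewrites Theorem \ref{HSmult} as $e(\Ic)=2m(\Ic)+\sum_{\sg}e(\Ic'_{\sg})$, bounds $2m(\Ic)\geq 4/\mathrm{lct}(\Ic)^2$ using the chosen coordinates, and applies Corollary \ref{HSknown} to each $\Ic'_{\sg}$. The only difference is that you spell out the regrouping (the stripping identity $\Ic'_{(\sg,\Sg)}=(\Ic'_{\sg})'_{\Sg}$) and the triangle/AM--GM estimate, which the paper treats as immediate.
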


 \begin{proof}
Theorem \ref{HSmult} can be rewritten as
 $$e(\Ic)=2m(\Ic)+\sum_{\sg}e(\Ic'_{\sg}),$$
where the summation is taken over all Newton maps associated to the
faces of the Newton diagram of $\Ic$ and the roots of the face
polynomials. If moreover we choose a system of coordinates such that
the log canonical threshold is given by the Newton polygon, we have
 $2m(\Ic)\geq 4\frac{1}{\text{lct}(\Ic)^2}$. Using Corollary \ref{HSknown}, we get the result.
 \end{proof}

 As an example, we take Example 3 in \cite{CV}.
We consider in $\bc[[x,y]]$ the ideal
$$\Ic=\left(y^2((x^2+y^3)^2+xy^5)(x^2-y^3),x^8y+x^{12}\right).$$
We computed in \cite{CV} that $e(\Ic)=102$.
There are two possible minimal Newton trees, drawn in Figure 23.

       \begin{figure}[Ht]
 \begin{center}
\includegraphics{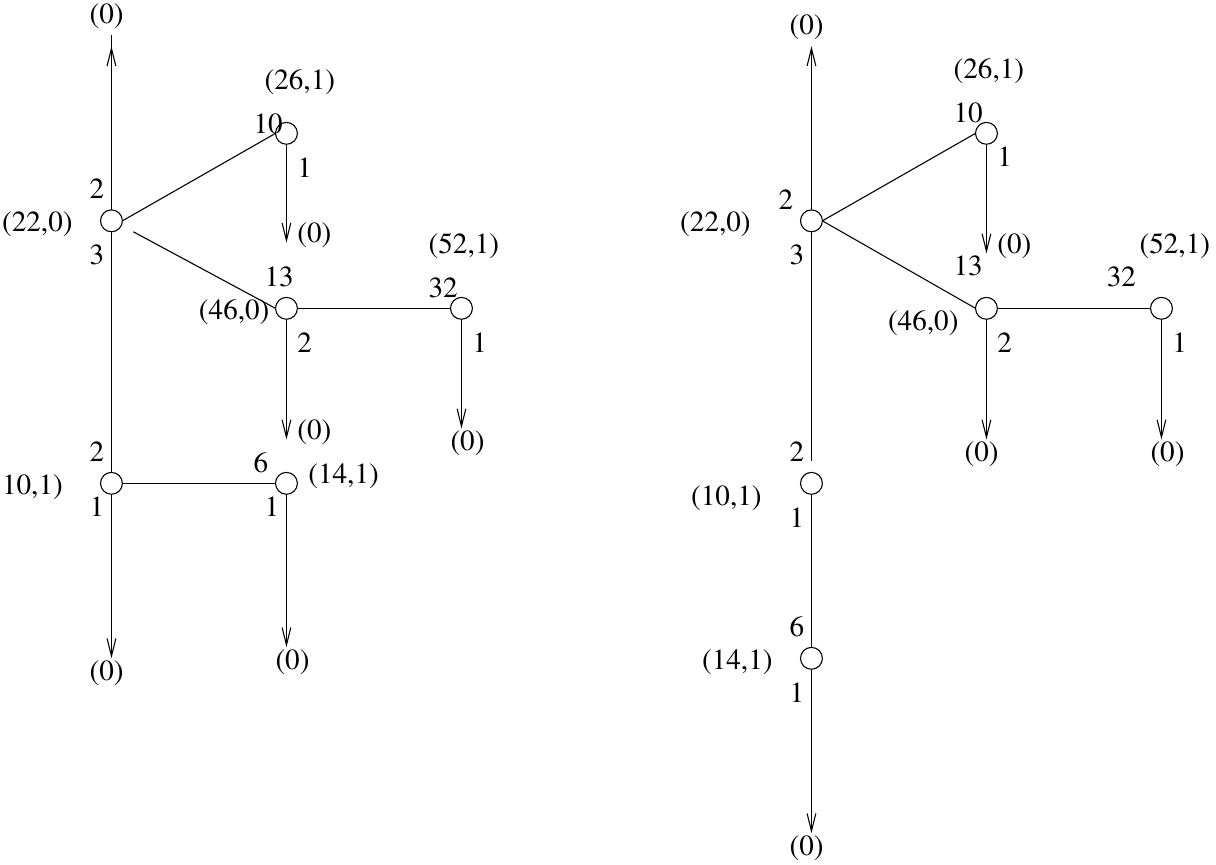}
\caption{}
 \end{center}
 \end{figure}

In both cases the log canonical threshold $\frac 5{22}$ is computed
from the Newton polygon. Corollary \ref{HSknown} applied to this
example gives $102>77,44$. Theorem \ref{HSinequality} gives with the
system of coordinates which gives the Newton tree on the left hand
side $102>85,72$, and if we use coordinates on the right hand side
$102>83.16$.

\bigskip

 \end{document}